\newtheorem{thm}{Theorem}[section]
\newtheorem{prop}[thm]{Proposition}
\newtheorem{cor}[thm]{Corollary}
\newtheorem{lem}[thm]{Lemma}
\theoremstyle{definition}
\theoremstyle{remark}
\newtheorem{rem}[thm]{Remark}
\newtheorem{ex}[thm]{Example}
\newcommand{\aut}[1]{\text{\rm aut}_1({#1})}
\newcommand{\K}{{\mathbb Q}}
\newcommand{\F}{{\mathcal F}}
\newcommand{\e}{\varepsilon}
\newcommand{\mapright}[1]{%
 \smash{\mathop{%
  \hbox to 1cm{\rightarrowfill}}\limits_{#1}}}
\newcommand{\maprightd}[2]{%
 \smash{\mathop{%
  \hbox to 1.2cm{\rightarrowfill}}\limits^{#1}\limits_{#2}}}
\newcommand{\mapleft}[1]{%
 \smash{\mathop{%
  \hbox to 1cm{\leftarrowfill}}\limits_{#1}}}
\newcommand{\mapleftu}[1]{%
 \smash{\mathop{%
  \hbox to 0.8cm{\leftarrowfill}}\limits^{#1}}}
\newcommand{\maprightu}[1]{%
 \smash{\mathop{%
  \hbox to 1cm{\rightarrowfill}}\limits^{#1}}}
\newcommand{\maprightud}[2]{%
 \smash{\mathop{%
  \hbox to 1cm{\rightarrowfill}}\limits^{#1}_{#2}}}
\newcommand{\mapleftud}[2]{%
 \smash{\mathop{%
  \hbox to 1cm{\leftarrowfill}}\limits^{#1}_{#2}}}
\newcounter{eqn}[section]
\def\theeqn{\textnormal{(\thesection.\arabic{eqn})}}
\def\eqnlabel#1{%
  \refstepcounter{eqn}%
  \label{#1}%
  \leqno{\theeqn}}
\begin{document}

\title[Rational visibility of a Lie group ]
{Rational visibility of a Lie group in the monoid of self-homotopy
equivalences of a homogeneous space}


\footnote[0]{{\it 2000 Mathematics Subject Classification}: 55P62, 
57R19, 57R20, 57T35. 
\\ 
{\it Key words and phrases.} Self-homotopy equivalence, 
homogeneous space, Sullivan model. 

This research was partially supported by a Grant-in-Aid for Scientific
Research (C)20540070
from Japan Society for the Promotion of Science.

Department of Mathematical Sciences, 
Faculty of Science,  
Shinshu University,   
Matsumoto, Nagano 390-8621, Japan   
e-mail:{\tt kuri@math.shinshu-u.ac.jp}

}

\author{Katsuhiko KURIBAYASHI}
\date{}
   
\maketitle

\begin{abstract}
Let $M$ be a homogeneous space admitting 
a left translation by a connected Lie group $G$. 
The adjoint to the action gives rise to a map from $G$ to the monoid of 
 self-homotopy equivalences of $M$.
The purpose of this paper is to investigate 
the injectivity of the homomorphism which is induced by the adjoint map 
on the rational homotopy. In particular, {\it the visible degrees} are 
determined explicitly for all the cases of simple Lie groups and their
 associated homogeneous spaces of rank one which are classified by
 Oniscik.
\end{abstract}

\section{Introduction}
The study of rational visibility problems we here consider is motivated 
by work due to Kedra and McDuff \cite{K-M} in which symplectic
topological methods are effectively used. 
In this paper, we deal with such problems relying upon algebraic models
for spaces and maps, which are
viewed as complements of those developed and used 
in recent work on rational homotopy of functions spaces 
\cite{B-L, B-F-M, B-M, B-M2, L-S, L-S2, L-S3}.

Let $f : X \to Y$ be a map between connected spaces 
whose fundamental groups are abelian.  
We say that $X$ is {\it rationally visible} in $Y$ 
with respect to the map $f$ if the induced map 
$
f_*\otimes 1 : \pi_i(X)\otimes \K \to  \pi_i(Y)\otimes \K 
$ 
is injective for any $i\geq 1$.  
Let $\aut{M}$ denote the identity component of 
the monoid of self-homotopy equivalences of a space $M$.  
Let $G$ be a connected Lie group and $M$  
an appropriate homogeneous space $M$ admitting a left translation by 
$G$.  
We then define a map of monoids 
$$
\lambda_{G, M} : G \to \aut{M} 
$$   
by $\lambda_{G, M}(g)(x)= gx$ for $g \in G$ and $x \in M$. 
In this paper, we investigate the rational visibility of $G$ 
in $\aut{M}$ with respect to the map $\lambda_{G, M}$.

The monoid map $\lambda_{G, M}$ factors through 
the identity component $\text{Homeo}_1(M)$ of 
the monoid of homeomorphisms of $M$ as well as the identity component 
$\text{Diff}_1(M)$ of the space of diffeomorphisms of $M$. 
Therefore the rational visibility of $G$ in $\aut{M}$ implies that 
of $G$ in $\text{Homeo}_1(M)$ and $\text{Diff}_1(M)$. We also expect
that non-trivial characteristic classes of the classifying spaces 
 $B\aut{M}$, $B\text{Homeo}_1(M)$ and $B\text{Diff}_1(M)$ can be obtained 
through the study of rational visibility.  
Very little is known about the (rational) homotopy of the groups   
$\text{Homeo}_1(M)$ and $\text{Diff}_1(M)$ for a general manifold $M$; 
see \cite{F-H} for the calculation of
$\pi_i(\text{Diff}_1(S^n))\otimes \K$ for $i$ in some range.  
Then such implication and expectation inspire us to consider 
the visibility problems of Lie groups. 
We refer the reader to papers \cite{F-T} and \cite{S.Smith2} 
for the study of rational homotopy types of $\text{aut}_1(M)$ itself and  
related function spaces.  

The key device for the study of rational visibility is the 
function space model due to Brown and Szczarba \cite{B-S} and 
Haefliger \cite{H}. 
Especially, an explicit rational model for the
map $\lambda_{G,M}$ is constructed by using that for 
the evaluation map described in \cite{B-M} and \cite{Ku1}; 
see Theorem \ref{thm:model_adjoint}.  
By analyzing such elaborate models, we obtain 
a recognition principle for rational visibility in Theorem 
\ref{thm:key} below. 
We here emphasize that not only does our machinery developed 
in this paper allow us to give other proofs 
to results in \cite{K-M}, \cite{N-S},  \cite{S} and  \cite{Y} 
concerning rational visibility 
but also it leads us to an unifying way of looking at  
the visibility problem explicitly as is seen in Tables 1 and 2 below.

In the rest of this section, we state our results.  

\begin{thm}
\label{thm:main1}
Let $G$ be a simply-connected Lie group and $T$ a torus in
 $G$ which is not necessarily maximal. Then $G$ is rationally visible in 
$\text{\em aut}_1(G/T)$ with respect to the map $\lambda_{G, G/T}$
 defined by the left translation of $G/T$ by $G$.  
\end{thm}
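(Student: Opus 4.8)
The plan is to work with Sullivan models throughout and reduce the injectivity of $(\lambda_{G,G/T})_*\otimes\mathbb{Q}$ on rational homotopy to a computation in the Brown–Szczarba/Haefliger function space model, using the explicit rational model for the adjoint map $\lambda_{G,M}$ recorded in Theorem~\ref{thm:model_adjoint} together with the recognition principle of Theorem~\ref{thm:key}. First I would fix a minimal model for $G/T$: since $G$ is simply connected and $T\subset G$ is a torus (contained in some maximal torus $T_G$), the homogeneous space $G/T$ is formal and its minimal model is the pure Sullivan algebra $(\Lambda(t_1,\dots,t_r)\otimes\Lambda(y_1,\dots,y_n),d)$ with $\deg t_i=2$, $dt_i=0$, $\deg y_j$ odd, and $dy_j\in\Lambda(t_1,\dots,t_r)$ the images of the polynomial generators of $H^*(BG;\mathbb{Q})$ under $H^*(BT_G;\mathbb{Q})\to H^*(BT;\mathbb{Q})\cong\Lambda(t_1,\dots,t_r)$. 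Here $n=\operatorname{rank}G$ and, because $T$ need not be maximal, one has $r\le \operatorname{rank}G$. The classifying space $BT$ has model $(\Lambda(t_1,\dots,t_r),0)$, and the fibration $G/T\to BT\to BG$ (Borel construction) gives the needed bundle data.

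The second step is to feed this into the model for $\lambda_{G,G/T}$. On rational homotopy groups, $\lambda_{G,G/T}$ is adjoint to the action $G\times G/T\to G/T$; passing to classifying spaces, the relevant map is $B\lambda_{G,G/T}\colon BG\to B\operatorname{aut}_1(G/T)$, and one detects visibility by showing that the composite of $\pi_*(G)\otimes\mathbb{Q}$ into $\pi_*(\operatorname{aut}_1(G/T))\otimes\mathbb{Q}$ is injective, equivalently (shifting degrees) that the induced map on indecomposables of the relevant models is injective. Concretely, Theorem~\ref{thm:key} should say that $G$ is rationally visible in $\operatorname{aut}_1(M)$ precisely when a certain map built from the function space model — essentially the ``derivative'' of the action — hits all of the primitive generators $x_i$ of $H^*(G;\mathbb{Q})$, i.e. the generators dual to $\pi_{2k_i-1}(G)\otimes\mathbb{Q}$. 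So I would unwind what this map is for $M=G/T$: the action of $G$ on $G/T$ is classified, at the level of models, by the map $H^*(BG;\mathbb{Q})\to H^*(BT;\mathbb{Q})$ itself, and the generators of $H^*(G;\mathbb{Q})$ are the transgressions of the generators of $H^*(BG;\mathbb{Q})$.

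The third and central step is then the following observation, which makes the argument go through: the odd generators $y_1,\dots,y_n$ of the model of $G/T$ are themselves in bijection with the generators of $H^*(G;\mathbb{Q})$ (both are indexed by the fundamental degrees of $G$), and $dy_j$ is exactly the image in $\Lambda(t_1,\dots,t_r)$ of the $j$-th polynomial generator. Consequently the Lie group $G$ acts on $G/T$ with ``enough'' rational homotopy: each homotopy class in $\pi_{2k_j-1}(G)\otimes\mathbb{Q}$ produces, via the action, a nonzero element of $\pi_{2k_j-1}(\operatorname{aut}_1(G/T))\otimes\mathbb{Q}$ — detected on the model side because the corresponding derivation of $(\Lambda(t,y),d)$ sending $y_j\mapsto 1$ (or the appropriate dual statement in the Haefliger model) is a nontrivial cycle in the model for $\operatorname{aut}_1(G/T)$ that pairs nontrivially with the image of $\lambda_{G,G/T}$. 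The point is that even when $T$ is a proper sub-torus, dropping the constraint $r=n$ only enlarges the target $\Lambda(t_1,\dots,t_r)$-module structure and does not kill any of the $y_j$'s, so injectivity is preserved; I would verify this by a direct check that the relevant composite on indecomposables is, after identification, the identity on the span of $x_1,\dots,x_n$.

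The main obstacle I anticipate is bookkeeping in the function space model rather than anything conceptual: one must identify, inside the (large, non-minimal) Brown–Szczarba model for $\operatorname{map}(G/T,G/T)$ and its basepoint component, precisely which cocycles represent the image of $\pi_*(G)\otimes\mathbb{Q}$ under $\lambda_{G,G/T}$, and check that these are not boundaries. The degree shift between $\operatorname{aut}_1$ and its classifying space, and the need to pass between the Haefliger-style $L_\infty$/derivation description and the commutative model, are the places where sign and indexing errors are easy to make; the recognition principle in Theorem~\ref{thm:key} is designed to package exactly this, so the real work is checking its hypotheses for the pure model of $G/T$. Since that model is formal and the differential is completely explicit in terms of the restriction $H^*(BG;\mathbb{Q})\to H^*(BT;\mathbb{Q})$, I expect the hypothesis to reduce to the (true) statement that this restriction map, together with the transgression, lets every generator of $H^*(G;\mathbb{Q})$ be realized — giving the claimed visibility for every torus $T$, maximal or not.
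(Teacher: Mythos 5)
Your proposal is correct and follows the paper's own route: Theorem~\ref{thm:main1} is deduced from the ``in particular'' clause of Theorem~\ref{thm:main2} (which itself rests on the recognition principle, Theorem~\ref{thm:key}), the crux being that every generator $c_j$ of $H^*(BG;\mathbb{Q})$ restricts to a decomposable element of $H^*(BT;\mathbb{Q})$ because $\deg c_j \geq 4$ while $H^*(BT;\mathbb{Q})$ is polynomial on degree-$2$ generators---this is exactly why each $dy_j$ in your pure model vanishes on indecomposables and the derivations $y_j \mapsto 1$ survive as cycles. The pairing you defer as ``the real work'' is settled most cleanly by the argument recorded in Remark~\ref{rem:anotherproof}: $\text{ev}_0 \circ \lambda_{G,G/T}$ is the projection $\pi : G \to G/T$, whose Sullivan representative $(\wedge W, d) \to (\wedge V_G, 0)$ sends each $y_j$ to the corresponding generator $x_j$, so it is already surjective on indecomposables and the required visibility follows.
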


Theorem \ref{thm:main1} is a generalization of the result 
\cite[Proposition 2.4]{N-S}, in which $T$ is  
assumed to be the maximal torus of $G$.    
We mention that the result due to Notbohm and Smith plays 
an important role in the proof of the uniqueness of fake Lie groups with
a maximal torus; see \cite[Section 1]{N-S1}.  
Theorem \ref{thm:main1} is deduced from Theorem \ref{thm:main2} below, which
gives a tractable criterion for the rational visibility. 

In order to describe Theorem \ref{thm:main2}, we fix notations. 
Let $G$ be a connected Lie group and $U$ a closed connected subgroup of
$G$. Let $B\iota : BU \to BG$ be the
 map induced by the inclusion $\iota : U \to G$. 
We assume that the rational cohomology of $BG$ is a polynomial algebra,
say $H^*(BG; \K)\cong \K[c_1, ..., c_k]$.  
In what follows, we write $H^*(X)$ for the cohomology of a space $X$
with coefficients in the rational field.

Consider the Lannes division functor 
$(H^*(BU) \! : \! H^*(G/U))$ in the category of differential graded
algebras (DGA's). 
Then the functor is regarded as a quotient of the free algebra  
$\wedge (H^*(BU)\otimes H_*(G/U))$, which in turn is isomorphic to 
$\wedge (QH^*(BU) \otimes H_*(G/U))$ as an algebra, where 
$QH^*(BU)$ denotes the vector space of indecomposable elements;  
see Section 2 for more details. 
Under the isomorphism, we can define an algebra map 
$u : (H^*(BU)  \! : \! H^*(G/U)) \to \K$ 
by 
$u(h\otimes b_*)= \langle j^*(h), b_* \rangle$, where $j : G/U \to BU$
is the fibre inclusion of the fibration 
$G/U \stackrel{j}{\to} BU \stackrel{B\iota}{\to} BG$. 
Moreover define $M_u$ to be the ideal of $(H^*(BU) \! : \! H^*(G/U))$ 
generated by the set 
$$
\{\eta | \deg \eta < 0 \} \cup \{\eta - u(\eta) | \deg \eta = 0\}. 
$$
Let $\pi : H^*(BU)\otimes H_*(G/U) \to (H^*(BU) \! : \! H^*(G/U))$
denote the composite of the inclusion 
$H^*(BU)\otimes H_*(G/U) \to \wedge (H^*(BU)\otimes H_*(G/U))$   
and the projection.

A recognition principle for rational visibility, Theorem \ref{thm:key}
below, enables one to deduce the following result.

\begin{thm}
\label{thm:main2}
With the above notation, assume that for 
$c_{i_1}, ..., c_{i_s} \in \{c_1, ..., c_k\}$, there are elements 
 $c_{j_1}, ..., c_{j_s} \in H^*(BG)$ 
and  $u_{1*}, ..., u_{s*} \in H^{\geq 1}(G/U)$ such that 
$$
\pi((B\iota)^*(c_{i_t})\otimes 1_*) \equiv 
\pi((B\iota)^*(c_{j_t})\otimes u_{t*})
$$ 
for $t = 1, ..., s$ modulo decomposable elements in 
$(H^*(BG) \! : \! H^*(G/U))/M_u$. Then there exists a map 
$\rho : \times_{j=1}^sS^{\deg c_{i_j}-1} \to G$ such that 
$\times_{j=1}^sS^{\deg c_{i_j}-1}$ 
is rationally visible in $\text{\em aut}_1(G/U)$ with respect to the
 map $(\lambda_{G,G/U})\circ \rho$. 
In particular, 
if $(B\iota)^*(c_{i_1}), ..., (B\iota)^*(c_{i_s})$ are decomposable
 elements, then 
$\pi((B\iota)^*(c_{i_t})\otimes 1_*)\equiv 0$  in 
$(H^*(BG)  \! : \! H^*(G/U))/M_u$ for $t = 1, ..., s$ 
and hence one obtains the same conclusion. 
\end{thm}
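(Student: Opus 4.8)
The plan is to reduce Theorem~\ref{thm:main2} to the recognition principle of Theorem~\ref{thm:key} by producing, from the hypothesized congruences, an explicit map $\rho$ whose composite with $\lambda_{G, G/U}$ is seen to be rationally injective on homotopy via the function-space model. First I would recall that $\times_{j=1}^s S^{\deg c_{i_j}-1}$ is, rationally, a product of odd spheres, so a map $\rho : \times_{j=1}^s S^{\deg c_{i_j}-1} \to G$ is determined up to rational homotopy by choosing, for each $j$, a rational homotopy class in $\pi_{\deg c_{i_j}-1}(G)\otimes\K \cong QH^{\deg c_{i_j}}(BG)^{\vee}$. The natural choice is to let $\rho$ be dual to the selection $c_{i_1},\dots,c_{i_s}$ of polynomial generators of $H^*(BG)$; equivalently, on the Sullivan model side, $B\rho$ sends $c_{i_t}$ to a generator and the remaining $c_i$'s to $0$. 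The point of the congruence hypothesis is exactly that, after passing to the Lannes division functor $(H^*(BG):H^*(G/U))/M_u$ — which is (a piece of) the model for $\aut{}$ acting, as set up before Theorem~\ref{thm:main2} — the class $\pi((B\iota)^*(c_{i_t})\otimes 1_*)$ is \emph{not} decomposable, because it is congruent to $\pi((B\iota)^*(c_{j_t})\otimes u_{t*})$ with $u_{t*}$ of positive degree, which contributes a genuine new generator.

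The key steps, in order, are: (1) Write down the function-space/Haefliger–Brown–Szczarba model for $\lambda_{G,G/U}$ furnished by Theorem~\ref{thm:model_adjoint}, and identify in it the subquotient $(H^*(BG):H^*(G/U))/M_u$ together with the map induced on indecomposables. (2) Translate "rational visibility of $\times_j S^{\deg c_{i_j}-1}$ with respect to $(\lambda_{G,G/U})\circ\rho$" into the statement that a certain linear map on indecomposables — induced by $B\rho$ followed by the evaluation/division-functor map — is injective; this is precisely what Theorem~\ref{thm:key} packages. (3) Using the hypothesized congruences modulo decomposables, show that the images of the $s$ basis classes dual to $c_{i_1},\dots,c_{i_s}$ are linearly independent in the relevant indecomposable quotient: each $\pi((B\iota)^*(c_{i_t})\otimes 1_*)$ equals, mod decomposables, $\pi((B\iota)^*(c_{j_t})\otimes u_{t*})$, and these $s$ elements are independent because the degree-$0$ part of $(H^*(BU):H^*(G/U))$ pairs the $u_{t*}$ off against distinct $H_*(G/U)$-directions (here one uses the algebra structure $\wedge(QH^*(BU)\otimes H_*(G/U))$ and the definition of $u$ and $M_u$). (4) Invoke Theorem~\ref{thm:key} to conclude rational visibility, and note that $\rho$ thus constructed has the required source $\times_{j=1}^s S^{\deg c_{i_j}-1}$.

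For the "in particular" clause, observe that if $(B\iota)^*(c_{i_t})$ is already decomposable in $H^*(BU)$, then its image $\pi((B\iota)^*(c_{i_t})\otimes 1_*)$ lies in the decomposables of $(H^*(BG):H^*(G/U))/M_u$ — because $\pi$ is an algebra map on the relevant piece and sends a product of positive-degree elements to a product — so it is $\equiv 0$ modulo decomposables; then the hypothesis holds trivially with $c_{j_t}$ and $u_{t*}$ chosen to make the right-hand side also decomposable (e.g. $c_{j_t}=0$), and the previous argument applies verbatim.

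The main obstacle I expect is Step~(3): controlling \emph{linear independence} (not merely nonvanishing) of the $s$ classes in the indecomposable quotient of the Lannes division functor, since the quotient by $M_u$ and the passage modulo decomposables interact with the wedge-algebra structure in a way that must be checked carefully — one has to be sure that distinct $c_{i_t}$'s cannot have images that collapse together, which is where the precise bookkeeping of $QH^*(BU)\otimes H_*(G/U)$, the pairing $\langle j^*(h), b_*\rangle$, and the positivity of $\deg u_{t*}$ all have to be used simultaneously. Unwinding Theorem~\ref{thm:model_adjoint} and Theorem~\ref{thm:key} precisely enough to make this bookkeeping rigorous is the technical heart of the proof; the construction of $\rho$ itself and the "in particular" reduction are comparatively routine.
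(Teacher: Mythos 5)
Your overall framing — reduce to Theorem \ref{thm:key} and unwind the HBS/Lannes model of $\lambda_{G,G/U}$ furnished by Theorem \ref{thm:model_adjoint} — is the correct strategy and matches the paper. However, the crucial mechanism by which the congruence hypothesis is used is stated backwards, and this leads to a genuine gap in your step (3).

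You write that ``the point of the congruence hypothesis is exactly that\ldots the class $\pi((B\iota)^*(c_{i_t})\otimes 1_*)$ is \emph{not} decomposable.'' This is the opposite of the truth: the ``in particular'' clause of the theorem is precisely the case where $\pi((B\iota)^*(c_{i_t})\otimes 1_*)$ \emph{is} decomposable (take $c_{j_t}=0$), so the hypothesis certainly does not force non-decomposability. What the hypothesis actually buys is a \emph{cocycle} in the indecomposable quotient. Concretely, the paper's proof constructs, via a comparison map $\theta : (H^*(BU):H^*(G/U)) \to \widetilde{E}$ built from the DGA map $k : \wedge W \to H^*(G/U)$ (which kills the $x_j$'s), the element
$$
z_t := x_{i_t}\otimes 1_* - (-1)^{\tau(|u_{t*}|)}\,x_{j_t}\otimes k^\sharp(u_{t*}) \in Q(\widetilde{E}/M_{\widetilde u}),
$$
and uses the congruence to check $\delta_0(z_t)=0$. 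Then, and this is the step you never reach, one computes $H^*Q(\widetilde{\widetilde{\mu}})(z_t)=x_{i_t}$ using Lemma \ref{lem:model-action2}: the Sullivan representative $\zeta$ for the action sends $x_{i_t}$ to $x_{i_t}\otimes 1 + 1\otimes x_{i_t} + \sum X_n\otimes X_n'C_n$, and the cross-terms vanish against $k^\sharp(u_{t*})$ because $k(X_n')=0$. Since the $x_{i_t}$ are distinct basis elements of $V_G$, linear independence of their preimages $z_t$ is automatic; the ``linear independence in the indecomposable quotient of the Lannes functor'' that you flag as the main obstacle is simply not the issue — you are looking at the source of $H^*Q(\widetilde{\widetilde{\mu}})$ when you should be looking at its image in $V_G$, which is exactly what Theorem \ref{thm:key} asks for. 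Likewise, $\rho$ is not constructed ahead of time ``dual to the $c_{i_t}$'' — its existence is the output of Theorem \ref{thm:key}, once one has produced the cocycles $z_t$ hitting $x_{i_1},\dots,x_{i_s}$.

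In short: your proposal identifies the right external scaffolding (Theorems \ref{thm:key} and \ref{thm:model_adjoint}, the division-functor model) but misidentifies what the congruence hypothesis is for and never isolates the correction term $x_{j_t}\otimes k^\sharp(u_{t*})$ or the computation $\widetilde{\widetilde{\mu}}(z_t)=x_{i_t}$ via Lemma \ref{lem:model-action2}, which together constitute the actual content of the proof. Your treatment of the ``in particular'' clause (decomposable $\Rightarrow$ equivalent to $0$ mod decomposables, take $c_{j_t}=0$) is correct, and is consistent with the paper's Remark \ref{rem:anotherproof}, which gives an even shorter route in that special case.
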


For a Lie group $G$ and a homogeneous space $M$ which admits a left
translation  by $G$, put 
$n(G):=\{ i \in {\mathbb N} \ | \ \pi_i(G)\otimes \K \neq 0\}$
and define the set $vd(G, M)$ of {\it visible degrees} by 
$$
vd(G, M) = 
\{ i \in n(G) \ | \ (\lambda_{G,M})_* : \pi_i(G)\otimes \K
\to  \pi_i(\aut{M})\otimes \K \ \text{is injective} \}.
$$

As for monoids of homeomorphisms and of diffeomorphisms, we benefit by
the study of rational visibility. In fact, we have an immediate but
very important corollary.  

\begin{cor}
\label{cor:Diff} 
If $l \in vd(G, M)$, then 
there exists an element with infinite order in  
$\pi_l(\text{\em Diff}_1(M))$  and 
$\pi_l(\text{\em Homeo}_1(M))$.   
\end{cor}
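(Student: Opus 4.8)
The plan is to deduce the corollary formally from the visibility hypothesis together with the factorization of $\lambda_{G,M}$ already recorded in the introduction. Each left translation $L_g \colon M \to M$, $x \mapsto gx$, is a diffeomorphism of $M$, and as $g$ ranges over the connected group $G$ the resulting maps stay in the identity component; hence $\lambda_{G,M}$ is the composite of based monoid maps
\[
G \ \xrightarrow{\ \bar\lambda\ }\ \text{Diff}_1(M) \ \xrightarrow{\ q\ }\ \text{Homeo}_1(M) \ \xrightarrow{\ r\ }\ \aut{M},
\]
where $q$ and $r$ are the evident inclusions. Applying $\pi_l(-)$ and then $-\otimes\mathbb{Q}$ yields a chain of homomorphisms of $\mathbb{Q}$-vector spaces whose total composite is $(\lambda_{G,M})_*\otimes 1$.

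Next I would choose a test class. Since $l \in n(G)$, the finitely generated abelian group $\pi_l(G)$ has positive rank, so it contains an element $\alpha$ of infinite order, that is, one with $\alpha\otimes 1 \neq 0$ in $\pi_l(G)\otimes\mathbb{Q}$. Because $l \in vd(G,M)$, the total composite $(\lambda_{G,M})_*\otimes 1$ is injective and therefore does not annihilate $\alpha\otimes 1$; a fortiori, neither the first arrow $\bar\lambda_*\otimes 1$ nor the composite of the first two arrows $q_*\bar\lambda_*\otimes 1$ annihilates it. Hence the images of $\alpha$ under $\bar\lambda_* \colon \pi_l(G) \to \pi_l(\text{Diff}_1(M))$ and under $q_*\bar\lambda_* \colon \pi_l(G) \to \pi_l(\text{Homeo}_1(M))$ are nonzero after tensoring with $\mathbb{Q}$; since a torsion element dies rationally, each of these images is an element of infinite order, which is exactly the assertion.

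I expect no genuine obstacle in this deduction: it is purely formal once one has the definitions of $n(G)$ and of $vd(G,M)$ and the factorization of $\lambda_{G,M}$ through $\text{Diff}_1(M)$ and $\text{Homeo}_1(M)$. The only points worth spelling out are that $\bar\lambda$, $q$ and $r$ are based maps between the relevant identity components, so that $\pi_l$ is functorial on them, and that "nonzero in $\pi_l(-)\otimes\mathbb{Q}$" means precisely "of infinite order in $\pi_l(-)$". All the real content sits upstream: the work is in certifying that a given degree $l$ actually lies in $vd(G,M)$, which is what Theorems \ref{thm:main1} and \ref{thm:main2} are designed to provide.
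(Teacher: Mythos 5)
Your argument is correct and coincides with the one the paper intends: it invokes the factorization of $\lambda_{G,M}$ through $\text{Diff}_1(M)$ and $\text{Homeo}_1(M)$ stated in the introduction, and then uses the elementary fact that if a composite is injective after $-\otimes\mathbb{Q}$ then so is the first map. The paper calls the corollary ``immediate'' for precisely this reason, so there is nothing to add.
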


\begin{ex}
\label{ex:spheres}
Since $SO(d+1)/SO(d)$ is homeomorphic to the sphere $S^{d}$, 
we can define the maps 
$\lambda_{SO(d+1), S^{d}} : SO(d+1) \to \aut{S^{d}}$  
by left translations.  
The Haefliger, Brown and Szczarba model for the function space 
$\text{aut}_1(S^{d})$ allows us to deduce that 
$\aut{S^{2m+1}}\simeq_\K S^{2m+1}$ and 
$\aut{S^{2m}}\simeq_\K S^{4m-1}$; see Example \ref{ex:evaluation-map}
 below. 
Therefore $\lambda_{SO(d+1), S^{d}}$ is not 
 injective on the rational homotopy in general. However it follows that 
the induced maps 
$$(\lambda_{SO(2m+2), S^{2m+1}})_* : 
\pi_{2m+1}(SO(2m+2))\otimes \K 
\to \pi_{2m+1}(\text{aut}_1(S^{2m+1}))\otimes \K,  
$$
$$
(\lambda_{SO(2m+1), S^{2m}})_* : 
\pi_{4m-1}(SO(2m+1))\otimes \K \to \pi_{4m-1}(\text{aut}_1(S^{2m}))\otimes \K
$$  
are injective. In fact it is well known that, as algebras, 
$H^*(BSO(2m+1))\cong \K[p_1, ..., p_m]$ and 
$H^*(BSO(2m+2))\cong \K[p_1, ..., p_m, \chi]$, where 
$\deg p_j = 4j$ and $\deg \chi = 2m+2$. 
Moreover for inclusions $\iota_1 : SO(2m+1) \to SO(2m+2)$ and  
$\iota_2 : SO(2m) \to SO(2m+1)$, 
we see that $(B\iota_1)^*(\chi)=0$ and $(B\iota_2)^*(p_m)= \chi^2$; 
see \cite{M-T}. 
Thus Theorem  \ref{thm:main2} yields that  
$vd(SO(2m+2), S^{2m+1})=\{ 2m+1 \}$ and that    
$vd(SO(2m+1), S^{2m})=\{ 4m-1 \}$.

The result \cite[1.1.5 Lemma]{A-B-K} allows one to conclude that 
the map $SO(d+1) \to \text{Diff}_1(S^{d})$ induced by the left
 translations is injective on the homotopy group. 
This implies that the inclusion $\text{Diff}_1(S^{d}) \to \aut{S^d}$ is
 surjective on the rational homotopy group.    
\end{ex}

Theorem \ref{thm:key} yields another proof of 
results due to Kedra and McDuff \cite{K-M} and Sasao \cite{S}.

\begin{thm}\cite[Proposition 4.8]{K-M}\cite{S}
\label{thm:main3}
Let $M$ be the flag manifold of the form  
$U(m)\left/ U(m_1)\times \cdots \times U(m_l) \right.$. 
Then $SU(m)$ is rationally visible in $\text{\em aut}_1(M)$ with respect to 
the map $\lambda_{SU(m), M}$ given by the left translations; that is, 
$vd(SU(m), M)=n(SU(m))=\{3, 5, ..., 2m-1\}$. 
In particular,   
the localized map 
$$(\lambda_{SU(m), U(m)/ U(m-1)\times U(1)})_\K 
: SU(m)_\K \to {\text{\em aut}_1({\mathbb C}P^{m-1})}_\K
$$
is a homotopy equivalence.  
\end{thm}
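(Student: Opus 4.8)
The plan is to derive Theorem~\ref{thm:main3} as a direct application of Theorem~\ref{thm:main2} (equivalently, of the recognition principle in Theorem~\ref{thm:key}), so the work reduces to unwinding the cohomological hypothesis for the specific pair $(G,U)=(SU(m),\, U(m_1)\times\cdots\times U(m_l))$, together with a small amount of bookkeeping about the second, more concrete, assertion. First I would recall the classical facts: $H^*(BSU(m))\cong \K[c_2,\dots,c_m]$ with $\deg c_i = 2i$, and $H^*(BU)\cong \K[x^{(t)}_1,\dots,x^{(t)}_{m_t}\,:\,t=1,\dots,l]$, the polynomial algebra on the Chern classes of the tautological bundles of the factors, with the map $(B\iota)^*$ sending each $c_i$ to the degree-$2i$ part of $\prod_{t=1}^l(1+x^{(t)}_1+\cdots+x^{(t)}_{m_t})$. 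The point I want to extract is that for \emph{every} $i\in\{2,\dots,m\}$ the class $(B\iota)^*(c_i)$ is a \emph{decomposable} element of $H^*(BU)$ whenever $l\ge 2$ (and is decomposable or zero in the remaining degenerate cases): indeed the only way a degree-$2i$ Chern-class monomial of one of the bundles can appear is if some $m_t\ge i$, and even then the universal formula forces it to occur together with strictly lower-degree classes of the other factors, so no indecomposable generator of $H^*(BU)$ survives in $(B\iota)^*(c_i)$ modulo decomposables. I would state this as a short lemma and prove it by a direct inspection of the total-Chern-class identity.

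Granting that lemma, the hypotheses of Theorem~\ref{thm:main2} are met with $s = m-1$ and $\{c_{i_1},\dots,c_{i_s}\} = \{c_2,\dots,c_m\}$: since each $(B\iota)^*(c_{i_t})$ is decomposable, the ``in particular'' clause of Theorem~\ref{thm:main2} applies verbatim and gives $\pi\big((B\iota)^*(c_{i_t})\otimes 1_*\big)\equiv 0$ in $(H^*(BG):H^*(G/U))/M_u$ for all $t$. Hence there is a map $\rho:\prod_{j=1}^{s}S^{\deg c_{i_j}-1}=\prod_{j=2}^{m}S^{2j-1}\to SU(m)$ with respect to which the product of odd spheres is rationally visible in $\text{aut}_1(M)$. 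Because $\{3,5,\dots,2m-1\}$ is exactly $n(SU(m))$ and $SU(m)$ has the rational homotopy type of $\prod_{j=2}^{m}S^{2j-1}$, the map $\rho$ is a rational equivalence onto $SU(m)$; composing with $\lambda_{SU(m),M}$ shows that $(\lambda_{SU(m),M})_*$ is injective on $\pi_i(-)\otimes\K$ for each $i\in n(SU(m))$, i.e. $vd(SU(m),M)=n(SU(m))=\{3,5,\dots,2m-1\}$. (Concretely one takes $\rho$ to be a product of the rationally nontrivial maps $S^{2j-1}\to SU(m)$ detected by $c_j$; the visibility principle guarantees these survive in $\text{aut}_1(M)$.)

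For the final sentence, with $M = U(m)/(U(m-1)\times U(1)) = \mathbb{C}P^{m-1}$, I would combine the visibility just proved with a dimension count. On the one hand $\pi_*(SU(m))\otimes\K$ is concentrated in degrees $3,5,\dots,2m-1$, one copy in each; on the other hand, by the Haefliger--Brown--Szczarba function-space model (as used in Example~\ref{ex:spheres} to compute $\text{aut}_1(S^d)$ rationally), $\text{aut}_1(\mathbb{C}P^{m-1})$ is rationally a product of Eilenberg--MacLane spaces with $\pi_i\otimes\K$ of total dimension equal to that of $SU(m)$ — explicitly $\mathbb{C}P^{m-1}$ has minimal model $(\wedge(x,y),\,dx=0,\,dy=x^m)$ with $|x|=2$, $|y|=2m-1$, and the derivation/function-space computation yields rational homotopy of $\text{aut}_1(\mathbb{C}P^{m-1})$ again in degrees $3,5,\dots,2m-1$, one-dimensional in each. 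Since $(\lambda_{SU(m),\mathbb{C}P^{m-1}})_*\otimes\K$ is injective between graded vector spaces of equal (finite) dimension in each degree, it is an isomorphism, so $(\lambda_{SU(m),\mathbb{C}P^{m-1}})_\K$ is a homotopy equivalence of rational spaces.

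The main obstacle, and the only place real verification is needed, is the decomposability lemma: one has to be careful that in the genuinely exceptional configurations — e.g. $l=1$ (so $M$ is a point), or some block with $m_t = m$, or a single $U(1)$ block together with $\mathbb{C}P^{m-1}$ — the claim still holds, possibly with $(B\iota)^*(c_i)$ outright zero rather than merely decomposable; in all such cases the conclusion $\pi((B\iota)^*(c_i)\otimes 1_*)\equiv 0$ in $(H^*(BG):H^*(G/U))/M_u$ persists, so Theorem~\ref{thm:main2} still applies. A secondary, lighter point is to make the rational-equivalence argument in the first paragraph precise: one must check that the composite $S^{2j-1}\to SU(m)\to \text{aut}_1(M)$ is nonzero on $\pi_{2j-1}\otimes\K$ for \emph{each} $j$ simultaneously, which is exactly what the product statement of Theorem~\ref{thm:main2} delivers, and then invoke that a degree-wise injection from $\prod_{j=2}^m S^{2j-1}$, a rational model of $SU(m)$, forces injectivity of $\lambda_{SU(m),M}$ itself.
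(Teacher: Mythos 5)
Your proof hinges on a ``decomposability lemma'' that is false, and this is not a minor technicality but the crux of the matter. You claim that for $U=U(m_1)\times\cdots\times U(m_l)\subset U(m)$ (or its intersection with $SU(m)$), each $(B\iota)^*(c_i)$ is decomposable in $H^*(BU)$ whenever $l\ge 2$, arguing that a monomial $x_i^{(t)}$ must always occur ``together with strictly lower-degree classes of the other factors.'' But the total Chern class identity $(B\iota)^*(c)=\prod_{t=1}^l\bigl(1+x_1^{(t)}+\cdots+x_{m_t}^{(t)}\bigr)$ contains, in degree $2i$, the term obtained by choosing $x_i^{(t)}$ from the $t$th factor and $1$ from all the others. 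Thus $(B\iota)^*(c_i)=\sum_{t:\,m_t\ge i}x_i^{(t)}+(\text{decomposables})$, which is indecomposable as soon as some $m_t\ge i$. For instance with $U=U(2)\times U(1)\subset U(3)$ one has $(B\iota)^*(c_2)=x_2+x_1y_1$, and for $\mathbb{C}P^{m-1}=U(m)/(U(m-1)\times U(1))$ the classes $(B\iota)^*(c_2),\ldots,(B\iota)^*(c_{m-1})$ are all indecomposable; only $(B\iota)^*(c_m)$ is decomposable. So the ``in particular'' clause of Theorem~\ref{thm:main2} does \emph{not} apply across the whole range $i=2,\dots,m$, and the conclusion $vd(SU(m),M)=\{3,\dots,2m-1\}$ does not follow by the route you propose.

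What the paper actually does is quite different and substantially harder. It takes $G=U(m+k)$, $U=U(m_1)\times\cdots\times U(m_s)\times U(k)$, and a larger subgroup $K=U(m)\times U(k)\supset U$ (with $m=m_1+\cdots+m_s$), and studies not $\aut{G/U}$ directly but the factored map $\lambda:G\to\F(G/U,G/K;p)$ into the space of maps to the Grassmannian $G/K$. Because $\lambda$ factors through $\aut{G/U}$, injectivity on rational homotopy for this map implies the desired visibility. One then needs the explicit minimal model for the Grassmannian, constructed combinatorially in Lemma~\ref{lem:mini-model}, and the explicit computations of $\delta_0$ and $\widetilde{\widetilde{\mu}}$ in Lemmas~\ref{lem:delta-1}--\ref{lem:mu-2}. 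The decomposability shortcut (which you correctly identified as the only place where anything could go wrong) is valid only for the top degrees $\deg\tau_{m+1},\dots,\deg\tau_{m+k}$, exactly where $d\tau_l=\sum_{i+j=l}c_i'c_j$ has all summands with both $i,j\ge 1$; it fails in the lower degrees, which is why the bulk of the proof in Section~6 is devoted to them. Your final paragraph (the dimension count for $\mathbb{C}P^{m-1}$ and the conclusion that $\lambda_\K$ is an equivalence) is sound once visibility is established, and agrees with the paper's argument, but the first two paragraphs need to be replaced by the full machinery of Section~6.
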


Furthermore, the same argument as in the proof of 
Theorem \ref{thm:main3} allows one to establish the following result. 

\begin{thm}
\label{thm:main4}
Let $M$ be the flag manifold 
$Sp(m)\left/ Sp(m_1)\times \cdots \times Sp(m_l) \right.$. 
Then $vd(Sp(m), M)=\{7, 11, ..., 4m-1\}$. 
In particular, the $3$-connected cover $Sp(m)\langle 3 \rangle$ is 
rationally visible in 
$\text{\em aut}_1(M)$ with respect to $\lambda_{Sp(m),M}\circ \pi$, 
where $\pi : Sp(m)\langle 3 \rangle \to Sp(m)$ is the projection.  
\end{thm}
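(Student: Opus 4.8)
The plan is to mimic the proof of Theorem \ref{thm:main3} verbatim, replacing the unitary groups by symplectic groups throughout and using Theorem \ref{thm:main2} as the computational engine. First I would record the relevant rational cohomology data: $H^*(BSp(m))\cong\K[q_1,\dots,q_m]$ with $\deg q_j=4j$, so that $n(Sp(m))=\{3,7,11,\dots,4m-1\}$, and for each factor $H^*(BSp(m_i))\cong\K[q_1^{(i)},\dots,q_{m_i}^{(i)}]$ with $\deg q_j^{(i)}=4j$. Writing $U=Sp(m_1)\times\cdots\times Sp(m_l)$, the map $B\iota:BU\to BSp(m)$ in cohomology sends $q_t$ to the degree-$4t$ component of $\prod_{i=1}^l(1+q_1^{(i)}+\cdots+q_{m_i}^{(i)})$ (the symplectic analogue of the Whitney sum formula for Pontrjagin/Chern-type classes). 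The point is that $(B\iota)^*(q_t)$ is indecomposable precisely when a single factor supplies a new generator, i.e. for $t$ in the range $\{m_1+1,\dots\}$ beyond... — more usefully, I would argue that $(B\iota)^*(q_t)$ is decomposable exactly for those $t$ for which $q_t$ does not survive to a generator of $H^*(Sp(m)/U)$, and indecomposable for the complementary set, which is forced to have cardinality $m$ (since $Sp(m)\to Sp(m)/U\to BU$ forces $H^*(Sp(m)/U)$ to be a polynomial-times-exterior bookkeeping governed by the Samelson–Leray argument). The degrees $4t-1$ of the \emph{indecomposable} classes among $(B\iota)^*(q_1),\dots,(B\iota)^*(q_m)$ then turn out to be exactly $\{7,11,\dots,4m-1\}$, with the generator $q_1$ (degree $3$ after desuspension) always pulling back to something decomposable or zero because $\sum m_i=m$ forces the degree-$4$ part of $(B\iota)^*$ to be carried entirely by the $q_1^{(i)}$'s, which are identified away in $H^*(Sp(m)/U)$.

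Next I would apply the second ("in particular") clause of Theorem \ref{thm:main2}: for the indices $i_t$ with $(B\iota)^*(q_{i_t})$ decomposable, we get $\pi((B\iota)^*(q_{i_t})\otimes 1_*)\equiv 0$ modulo decomposables in $(H^*(BSp(m)):H^*(Sp(m)/U))/M_u$, hence the theorem produces a map $\rho:\times_j S^{\deg q_{i_j}-1}\to Sp(m)$ with the product of spheres rationally visible in $\text{aut}_1(M)$ via $\lambda_{Sp(m),M}\circ\rho$. Since the degrees $\deg q_{i_j}-1$ realized this way are $\{7,11,\dots,4m-1\}$, this shows $\{7,11,\dots,4m-1\}\subseteq vd(Sp(m),M)$. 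For the reverse inclusion I would invoke Example \ref{ex:spheres}-type reasoning together with the fact that $q_1$ restricts nontrivially only in a decomposable way: the degree-$3$ generator of $\pi_*(Sp(m))\otimes\K$ maps into the image coming from $\pi_*(U)\otimes\K$ inside $\pi_*(\text{aut}_1(M))\otimes\K$ by the explicit form of the adjoint model of Theorem \ref{thm:model_adjoint}, so $3\notin vd(Sp(m),M)$. Combining, $vd(Sp(m),M)=\{7,11,\dots,4m-1\}$.

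Finally, for the last sentence: the $3$-connected cover $Sp(m)\langle 3\rangle\to Sp(m)$ induces on rational homotopy the inclusion killing exactly the degree-$3$ class, so $\pi_i(Sp(m)\langle 3\rangle)\otimes\K=\pi_i(Sp(m))\otimes\K$ for $i\in\{7,11,\dots,4m-1\}$ and $0$ for $i=3$. Composing with the visibility statement just proved gives that $(\lambda_{Sp(m),M}\circ\pi)_*\otimes 1$ is injective on every $\pi_i\otimes\K$, i.e. $Sp(m)\langle 3\rangle$ is rationally visible in $\text{aut}_1(M)$ with respect to $\lambda_{Sp(m),M}\circ\pi$.

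I expect the main obstacle to be the bookkeeping in the first paragraph: showing cleanly that, among $(B\iota)^*(q_1),\dots,(B\iota)^*(q_m)$, it is precisely $q_1$ (and more generally the low-degree ones absorbed by the $\sum m_i=m$ constraint) that become decomposable in the relevant quotient $(H^*(BSp(m)):H^*(Sp(m)/U))/M_u$, so that the indecomposable survivors have desuspended degrees exactly $\{7,11,\dots,4m-1\}$. This is the symplectic shadow of the Chern-class computation in Theorem \ref{thm:main3}; once it is pinned down, Theorem \ref{thm:main2} does the rest mechanically.
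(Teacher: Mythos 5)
Your proposal diverges in an important way from what the paper actually does, and the divergence introduces a genuine gap.

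The paper's proof of Theorem \ref{thm:main3}, which Theorem \ref{thm:main4} is stated to parallel, does \emph{not} rest on the decomposability clause of Theorem \ref{thm:main2}. It introduces an intermediate subgroup $K$ with $U\subsetneq K\subsetneq G$ (in Section 6, $K=U(m)\times U(k)$ inside $G=U(m+k)$), builds an explicit minimal model for $G/K$ via Lemma \ref{lem:mini-model}, and then applies Theorem \ref{thm:key} to the map $G\to\F(G/U,(G/K)_\K;e\circ p)$ using the concrete cycle and evaluation computations of Lemmas \ref{lem:delta-1}--\ref{lem:mu-2}. Only the top $k$ degrees (coming from the $U(k)$ factor) are handled by the decomposability shortcut. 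Your proposal skips the intermediate-$K$ machinery entirely and tries to run everything through the ``in particular'' clause of Theorem \ref{thm:main2}, and this simply does not give all of $\{7,11,\dots,4m-1\}$.

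Concretely, the bookkeeping in your first paragraph is both internally inconsistent (you first say the \emph{indecomposable} survivors account for $\{7,\dots,4m-1\}$, then invoke the decomposable ones for visibility) and factually wrong. With $U=Sp(m_1)\times\cdots\times Sp(m_l)$, the Whitney-type formula gives $(B\iota)^*(q_t)=\sum_{j_1+\cdots+j_l=t}q^{(1)}_{j_1}\cdots q^{(l)}_{j_l}$. The class $(B\iota)^*(q_1)=\sum_i q_1^{(i)}$ is a sum of polynomial generators, hence \emph{indecomposable}, contrary to your claim. More seriously, $(B\iota)^*(q_t)$ is indecomposable for every $t\leq\max_i m_i$, because the single-factor term $q_t^{(i_0)}$ (for any $i_0$ with $m_{i_0}\geq t$) contributes a nonzero element of $QH^*(BU)$. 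For example, with $U=Sp(m-1)\times Sp(1)$ one has $(B\iota)^*(q_2)=q_2^{(1)}+q_1^{(1)}q_1^{(2)}$, which is indecomposable; so the decomposability criterion gives you visibility only in degrees $4t-1$ with $t>\max_i m_i$, a proper subset of $\{7,\dots,4m-1\}$ whenever some $m_i\geq 2$. This is precisely the range that the paper covers instead by the Lemma \ref{lem:delta-1}--\ref{lem:mu-2} type computation against an intermediate $K=Sp(m')\times Sp(m_l)$.

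Your argument that $3\notin vd(Sp(m),M)$ is also not a proof: the assertion that ``the degree-$3$ generator maps into the image coming from $\pi_*(U)\otimes\K$'' is not established by anything and does not translate into the injectivity/surjectivity criterion of Theorem \ref{thm:key}. To rule out degree $3$ you would need to either compute $H^3(Q(\widetilde{\widetilde{\mu}}))$ for the HBS model of $\aut{M}$ itself (the case $K=U$, where Theorem \ref{thm:key} gives an ``if and only if'') or argue directly with the model for $\lambda_{Sp(m),M}$, neither of which you do. The final paragraph on the $3$-connected cover is fine modulo the earlier gaps, but it inherits them.
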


Let $G$ be a compact connected simple Lie group and $U$ a closed 
connected subgroup for which $G/U$ is a simply-connected homogeneous
space of rank one; that is, its rational cohomology is
 generated by a single element. In order to illustrate usefulness of
 Theorems \ref{thm:main2} and \ref{thm:key}, by applying the results, 
we determine visible degrees of $G$ in $\aut{G/U}$ for each couple $(G, U)$ 
classified by Oniscik in \cite[Theorems 2 and 4]{O}.

In the following table, we first list such homogeneous spaces 
of the form $G/U$ with a simple Lie group $G$ 
and its subgroup $U$, which is not diffeomorphic to spheres or
 projective spaces, together with the sets $vd(G, G/U)$.

\medskip
\hspace{-0.4cm}
{\small
\begin{tabular}{|l|c|l|l|}
\hline
\qquad \ \ \ $(G, U, \text{index})$ & $(G/U)_\K$ 
  & $vd(G, G/U)$ & $n(G)$ \\ \hline
(1) $(SO(2n+1), SO(2n-1)\!\times \!SO(2), 1)$ & ${\mathbb C}P^{2n-1}$ & 
 $\{3, ..., 4n-1\}$ & $\{3, ..., 4n-1\}$ \\ 
(2)  $(SO(2n+1), SO(2n-1), 1)$ 
 & $S^{4n-1}$ & $\{4n-1\}$ & $\{3, ..., 4n-1\}$ \\
(3)  $(SU(3), SO(3), 4)$ 
 & $S^{5}$ & $\{5\}$ & $\{3, 5\}$ \\
(4)  $(Sp(2), SU(2), 10)$ 
 & $S^{7}$ & $\{7\}$ & $\{3, 7\}$ \\
(5)  $(G_2, SO(4), (1,3))$ 
 & ${\mathbb H}P^{2}$ & $\{11\}$ & $\{3, 11\}$ \\
(6)  $(G_2, U(2), 3)$ 
 & ${\mathbb C}P^{5}$ & $\{3, 11\}$ & $\{3, 11\}$ \\
(7)  $(G_2, SU(2), 3)$ 
 & $S^{11}$ & $\{11\}$ & $\{3, 11\}$ \\
(6)'  $(G_2, U(2), 1)$ 
 & ${\mathbb C}P^{5}$ & $\{3, 11\}$ & $\{3, 11\}$ \\
(7)'  $(G_2, SU(2), 1)$ 
 & $S^{11}$ & $\{11\}$ & $\{3, 11\}$ \\
(8)  $(G_2, SO(3), 4)$ 
 & $S^{11}$ & $\{11\}$ & $\{3, 11\}$ \\
(9)  $(G_2, SO(3), 28)$ 
 & $S^{11}$ & $\{11\}$ & $\{3, 11\}$ \\
\hline
\end{tabular}
}
\begin{center}
Table 1
\end{center}
Here the value of the index of the inclusion $j : U \to G$ 
is regarded as the integer $i$ by which 
the induced map 
$j_* : H_3(U ; {\mathbb Z}) \to H_3(G ; {\mathbb Z})={\mathbb Z}$ 
is multiplication; see the proof of \cite[Lemma 4]{O}.   
The second column denotes the rational homotopy type of $G/U$
 corresponding a triple $(G, U, i)$. 
The homogeneous spaces $G/U$ for the cases (6)' and (7)' are
 diffeomorphic to those for the cases (1) and (2) with $n=3$,
 respectively.  Moreover, the homogeneous spaces are not diffeomorphic
 each other except for the cases (6)' and (7)'.

The following table describes visible degrees of a simple Lie group 
$G$ in $\aut{G/U}$ for which $G/U$ 
is of rank one and 
diffeomorphic to the sphere or the projective space, where the second
 column denotes the diffeomorphism type of the homogeneous space 
$G/U$ for the triple $(G, U, i)$.    

\medskip
\hspace{-0.4cm}
{\small
\begin{tabular}{|l|c|l|l|}
\hline
\qquad \ \ \ $(G, U, \text{index})$ & $G/U$ 
  & $vd(G, G/U)$ & $n(G)$ \\ \hline
(10) $(SU(n+1), SU(n), 1)$ & $S^{2n+1}$ & 
 $\{2n+1\}$ & $\{3, ..., 2n+1\}$ \\ 
(11)  $(SU(n+1), S(U(n)\!\times \!U(1)), 1)$ & ${\mathbb C}P^{n}$ & 
 $\{3, ..., 2n+1\}$ & $\{3, ..., 2n+1\}$ \\ 
(12)  $(SO(2n+1), SO(2n), 1)$ 
 & $S^{2n}$ & $\{4n-1\}$ & $\{3, ..., 4n-1\}$ \\
(13)  $(SO(9), SO(7), 1)$ 
 & $S^{15}$ & $\{15\}$ & $\{3, 7, 11, 15\}$ \\
(14)  $(Spin(7), G_2, 1)$ 
 & $S^7$ & $\{7\}$ & $\{3, 7 ,11\}$ \\
(15)  $(Sp(n), Sp(n-1), 1)$ 
 & $S^{4n-1}$ & $\{4n-1\}$ & $\{3, ..., 4n-1\}$ \\
(16)  $(Sp(n), Sp(n-1)\times S^1, 1)$ 
 & ${\mathbb C}P^{2n-1}$ & $\{3, ..., 4n-1\}$ & $\{3, ..., 4n-1\}$ \\
(17)  $(Sp(n), Sp(n-1)\!\times \! Sp(1), 1)$ 
 & ${\mathbb H}P^{n-1}$ & $\{7, ..., 4n-1\}$ & $\{3, ..., 4n-1\}$ \\
(18)  $(SO(2n), SO(2n-1), 1)$ 
 & $S^{2n-1}$ & $\{2n-1\}$ & $\{3, ..., 4n-5, 2n-1\}$ \\
(19)  $(F_4, Spin(9), 1)$ 
 & ${\mathcal L}P^2$ & $\{23\}$ & $\{3, 11, 15, 23\}$ \\
(20)  $(G_2, SU(3), 1)$ 
 & $S^{6}$ & $\{11\}$ & $\{3, 11\}$ \\
\hline
\end{tabular}
}
\begin{center}
Table 2
\end{center}

\noindent
Here ${\mathcal L}P^2$ stands for the Cayley plane. 

The former half of Theorem \ref{thm:main2}, namely the
Lannes functor argument, does work well enough when determining the set 
$vd(G_2, G_2/U(2))$ of visible degrees 
in case (6) in Table 1; see Section 8. 
Observe that for the cases (12) and (18) 
the results follow from those in Example \ref{ex:spheres}. We are aware 
 that in the above tables 
$G$ is rationally visible in $\aut{G/U}$ if and only if $G/U$ has the
rational homotopy type of 
 the complex projective space. It should be mentioned that 
for the map 
$\lambda_* : \pi_*(F_4)\otimes \K \to 
\pi_*(\aut{{\mathcal L}P^2})\otimes \K$, 
the restriction $(\lambda_*)_{15}$ is not injective though the vector space 
$\pi_{15}(\text{aut}_1({\mathcal L}P^2))\otimes \K$ and
 $\pi_{15}(F_4)\otimes \K$ are non-trivial; see Section 8. 
Moreover, Corollary \ref{cor:Diff} enables us to obtain 
non-trivial elements with infinite order in 
$\pi_l(\text{Diff}_1(M))$ and 
$\pi_l(\text{Homeo}_1(M))$ for each homogeneous space $M$ described in
Tables 1 and 2.

Let $X$ be a space and ${\mathcal H}_{H, X}$ the monoid of all homotopy
equivalences that act trivially on the rational homology of $X$. 
The result \cite[Proposition 4.8]{K-M}  
asserts that if $X$ is generalized flag manifold 
$U(m)\left/ U(m_1)\times \cdots \times U(m_l) \right.$, 
then the map  $B\psi_{SU(m)} : BSU(m) \to B{\mathcal H}_{H, X}$ 
arising from the left translations is injective on the rational homotopy.    
Let $\iota : \text{aut}_1(X) \to {\mathcal H}_{H, X}$ be the inclusion.   
Since  $B\psi_{SU(m)} = B\iota\circ B\lambda_{SU(m),X}$,   
the result \cite[Proposition 4.8]{K-M} yields Theorem \ref{thm:main3}.  
Theorem \ref{thm:main5} below guarantees that the converse also
holds; that is, the result due to Kedra and McDuff   
is deduced from Theorem \ref{thm:main3}; see Section 7. 

Before describing Theorem \ref{thm:main5}, 
we recall an $F_0$-space, which is a
simply-connected finite complex with finite-dimensional rational
homotopy and trivial rational cohomology in odd degree. For example, 
a homogeneous space $G/T$ for which $G$ is a connected Lie group and 
$T$ is a maximal torus of $G$ is an $F_0$-space.  

\begin{thm}
\label{thm:main5}
Let $X$ be an $F_0$-space or a space having the rational homotopy type of the
 product of odd dimensional 
 spheres and $G$ a connected topological group which acts on 
 $X$. 
Then $(B\lambda_{G, X})_* :  H_*(BG) 
\to H_*(B\text{\em aut}_1(X))$ is
 injective if and only if so is 
$(B\psi)_* : H_*(BG) \to H_*(B{\mathcal H}_{H, X})$. Here 
$\psi : G \to {\mathcal H}_{H, X}$ denotes the morphism of monoids induced
 by the action of $G$ on $X$.   
\end{thm}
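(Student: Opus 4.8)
The plan is to factor $B\psi$ through $B\text{aut}_1(X)$ and show that the second map in the factorization is, on rational homology, injective in the relevant range — indeed split injective — so that injectivity of $(B\psi)_*$ and $(B\lambda_{G,X})_*$ are equivalent. Concretely, with $\iota:\text{aut}_1(X)\to{\mathcal H}_{H,X}$ the inclusion, one has $\psi=\iota\circ\lambda_{G,X}$ and hence $B\psi=B\iota\circ B\lambda_{G,X}$; so the "only if" direction is automatic. For the "if" direction it suffices to produce a left homotopy inverse, after rationalization, to $B\iota$ restricted to the image of $B\lambda_{G,X}$ — or, more cleanly, to show that $(B\iota)_*:H_*(B\text{aut}_1(X))\to H_*(B{\mathcal H}_{H,X})$ is injective. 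First I would record the elementary fact that ${\mathcal H}_{H,X}$ is the union of those components of the full monoid $\text{aut}(X)$ whose elements act trivially on $H_*(X;{\mathbb Q})$, so $\text{aut}_1(X)$ is the identity component of ${\mathcal H}_{H,X}$ and $\pi_0({\mathcal H}_{H,X})$ is a subgroup of the (finite, in the cases at hand) group of homological self-equivalences; thus $B{\mathcal H}_{H,X}$ is, up to rational equivalence, $B\text{aut}_1(X)$ with $\pi_1$ enlarged by a finite group acting on the higher rational homotopy.

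The heart of the matter is then to see that this finite $\pi_1$-action is rationally trivial, i.e. that $B\iota$ is a rational homology isomorphism (or at least injective), when $X$ is an $F_0$-space or a rational product of odd spheres. For this I would invoke the rational models for $\text{aut}_1(X)$: in both cases $X$ is formal and elliptic with very rigid minimal model, and the derivation model of Sullivan computes $\pi_*(\text{aut}_1(X))\otimes{\mathbb Q}$ as the homology of $\text{Der}(\wedge V)$. When $X$ is a rational product of odd spheres, $\text{aut}_1(X)$ is rationally a product of Eilenberg--MacLane pieces and its rational homotopy is explicit; when $X$ is an $F_0$-space, one uses that $H^*(X;{\mathbb Q})=\wedge V_{\text{ev}}/(\text{regular sequence})$ and the standard description (cf. the references \cite{F-T}, \cite{S.Smith2} cited in the excerpt) of the positive-degree derivations, together with the fact that $H^{\text{odd}}(X;{\mathbb Q})=0$ forces the outer action of $\pi_0$ on $\pi_*(\text{aut}_1(X))\otimes{\mathbb Q}$ to factor through something acting trivially on $H_*(B\text{aut}_1(X);{\mathbb Q})$ by the standard transfer/averaging argument for a finite group. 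The key computational step is therefore: for these two classes of $X$, the Serre spectral sequence of $B\text{aut}_1(X)\to B{\mathcal H}_{H,X}\to B\pi_0({\mathcal H}_{H,X})$ collapses rationally and has trivial local coefficients, whence $(B\iota)_*$ is injective (in fact an isomorphism onto the invariants, which here is everything).

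Granting that, the argument closes quickly: if $(B\psi)_*$ is injective then so is its factor $(B\lambda_{G,X})_*$ composed with the injection $(B\iota)_*$, hence $(B\lambda_{G,X})_*$ is injective; conversely $(B\lambda_{G,X})_*$ injective plus $(B\iota)_*$ injective gives $(B\psi)_*=(B\iota)_*\circ(B\lambda_{G,X})_*$ injective. I would then remark that this, combined with Theorem \ref{thm:main3}, yields the promised converse to \cite[Proposition 4.8]{K-M} for $X=U(m)/U(m_1)\times\cdots\times U(m_l)$, since such flag manifolds are $F_0$-spaces. The main obstacle I anticipate is precisely the claim that the finite group $\pi_0({\mathcal H}_{H,X})$ of homological self-equivalences acts trivially on $\pi_*(\text{aut}_1(X))\otimes{\mathbb Q}$ — a priori it acts by outer automorphisms of the rational homotopy Lie algebra of $\text{aut}_1(X)$, and ruling out nontrivial such actions uses the special structure ($H^{\text{odd}}=0$ in the $F_0$ case, or the explicit product form in the sphere case) in an essential way; one must be careful that "acts trivially on $H_*(X;{\mathbb Q})$" really does kill the action on the derivation Lie algebra and not merely on $H_*(X)$ itself.
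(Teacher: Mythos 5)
Your ``if''/``only if'' labels are reversed relative to the statement (the easy direction, coming from the factorization $B\psi = B\iota\circ B\lambda_{G,X}$, is that injectivity of $(B\psi)_*$ forces injectivity of $(B\lambda_{G,X})_*$), but that is cosmetic. The real issue is that your route to the hard implication differs from the paper's and has two unfilled gaps that you yourself flag but do not resolve. The paper does \emph{not} argue via the extension $\text{aut}_1(X) \to {\mathcal H}_{H,X} \to \pi_0({\mathcal H}_{H,X})$ and a transfer: instead it introduces May's two--sided bar construction over a two--object category, builds monoids ${\mathcal M}$ and ${\mathcal M}'$ in ${\mathcal O}Gr$ out of $\text{aut}_1(X)$, ${\mathcal H}_{H,X}$, $\text{aut}_1(X_\K)$ and the component $\F(X,X_\K;e)$, and uses \cite[Theorem 3.2]{May2} to show that both $B({\mathcal O},{\mathcal M},{\mathcal O})$ and $B({\mathcal O},{\mathcal M}',{\mathcal O})$ are homotopy equivalent to $B\text{aut}_1(X_\K)$, while $B\text{aut}_1(X)\to B({\mathcal O},{\mathcal M},{\mathcal O})$ is a rational equivalence because $e_*:\text{aut}_1(X)\to\F(X,X_\K;e)$ is a localization. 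The one genuinely space--specific input is \cite[3.6 Corollary]{A-L} together with \cite[Proposition 32.16]{F-H-T}, which give that $[X,X_\K]\to\operatorname{Hom}(H^*(X_\K;\K),H^*(X;\K))$ is a bijection, hence $e\circ\varphi\simeq e$ for every $\varphi\in{\mathcal H}_{H,X}$; that is what makes ${\mathcal M}'$ well defined. This sidesteps both of your difficulties.

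Concretely, the two gaps in your plan are the following. First, you need $\pi_0({\mathcal H}_{H,X})$ to be finite for the $\mathbb{Q}$--vanishing of $H^{>0}(B\pi_0;-)$ and the collapse of the Serre spectral sequence. This is plausible for $F_0$--spaces (which are finite complexes by definition) via Wilkerson--type finiteness, but the second class of spaces in the statement, ``spaces having the rational homotopy type of a product of odd spheres,'' need not be finite CW complexes under the paper's standing hypotheses (only rational homology of finite type is assumed), so finiteness is not automatic and would need a separate argument or an added hypothesis. Second, you need the $\pi_0({\mathcal H}_{H,X})$--action on $H_*(B\text{aut}_1(X);\mathbb{Q})$ to be trivial; otherwise the transfer identifies $H_*(B{\mathcal H}_{H,X};\mathbb{Q})$ with coinvariants, which is a quotient of $H_*(B\text{aut}_1(X);\mathbb{Q})$ rather than a subspace, and injectivity of $(B\iota)_*$ fails. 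You explicitly call this the main obstacle but give only a heuristic. The fix is precisely the rigidity fact the paper cites: $[X_\K,X_\K]\cong\operatorname{Hom}(H^*(X_\K),H^*(X_\K))$ forces every $\varphi\in{\mathcal H}_{H,X}$ to become homotopic to the identity after rationalization, so conjugation by $\varphi$ on $\text{aut}_1(X)_\K\simeq\text{aut}_1(X_\K)$ is trivial. If you import that lemma and restrict attention to $X$ a finite complex, your spectral--sequence argument closes; without it, as written, the proposal does not establish the theorem.
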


We now provide an overview of the rest of the paper. 
In Section 2, 
we recall a model for the evaluation map of a function space 
from \cite{Ku1}, \cite{B-M} and \cite{H-K-O}. 
In Section 3,
a rational model for the map $\lambda_{G,M}$ mentioned above is
constructed. Section 4 is devoted
to the study of a model for the left translation of a Lie group on a
homogeneous space. 
In Section 5, we prove 
Theorem \ref{thm:main2}.  
Theorem \ref{thm:main3} is proved in Section 6. 
In Section 7, we prove Theorem \ref{thm:main5}. 
 The results on visible degrees in Tables 1 and 2 are verified
in Section 8. 
In Appendix, Section 9, the group cohomology of 
$\text{Diff}_1(M)$ for an appropriate homogeneous space $M$ is
discussed. 
By using Theorem \ref{thm:main2}, we find a non-trivial class in the
group cohomology.

\section{Preliminaries}

The tool for the study of the rational visibility problem 
is a rational model for the evaluation map $ev : \aut{M}\times M \to M$, 
which is described in terms of the rational model due to Brown and
Szczarba \cite{B-S} and Haefliger \cite{H}. 
For the convenience of the reader and to make notation more precise, 
we recall from \cite{B-M} and \cite{Ku1} the model for the evaluation map.    
We shall use the same terminology as in \cite{B-G} and \cite{F-H-T}. 

Throughout the paper, for an augmented algebra $A$, we write $QA$ for
the space $\overline{A}/\overline{A}\cdot \overline{A}$ 
of indecomposable elements, where $\overline{A}$ denotes the
augmentation ideal. For a DGA $(A, d)$, let $d_0$ denote the linear part
of the differential.

In what follows, we assume that a space is nilpotent and 
has the homotopy type of a connected 
CW complex with rational homology of finite type 
unless otherwise explicitly stated. 
We denote by $X_\K$ the localization of a nilpotent space $X$. 

Let $A_{PL}$ be the simplicial commutative cochain algebra of polynomial
differential forms with coefficients in ${\mathbb Q}$; see
\cite{B-G} and  \cite[Section 10]{F-H-T}.  
Let $\mathcal{A}$ and  $\Delta {\mathcal S}$ be the category of DGA's
and that of simplicial sets, respectively.  
Let $\mbox{DGA}(A, B)$ and $\mbox{Simpl}(K, L)$ denote the hom-sets of
the categories $\mathcal{A}$ and $\Delta {\mathcal S}$, respectively.    
Following Bousfield and Gugenheim \cite{B-G}, we define functors 
$\Delta : \mathcal{A} \to  \Delta {\mathcal S}$ 
and $\Omega : \Delta {\mathcal S} \to  \mathcal{A}$
by $ \Delta (A)= \mbox{DGA}(A, A_{PL})$ and by 
$\Omega (K) = \mbox{Simpl}(K, A_{PL})$.   

Let $(B, d_B)$ be a connected, locally finite DGA and $B_*$ denote
the differential graded coalgebra defined by $B_q=\mbox{Hom}(B^{-q}, \K)$
for $q\leq 0$ together with the coproduct $D$ and the differential $d_{B
*}$ which are dual to the multiplication of $B$ and to the differential
$d_B$, respectively. 
We denote by $I$ the ideal of the free algebra 
$\wedge(\wedge V \otimes B_*)$ 
generated by $1\otimes 1_* -1$ and all elements of the form 
$$
a_1a_2\otimes \beta - 
\sum_i(-1)^{|a_2||\beta_i'|}(a_1\otimes \beta_i')(a_2\otimes \beta_i''), 
$$
where $a_1, a_2 \in \wedge V$, $\beta \in B_*$ and $D(\beta) 
= \sum_i\beta'_i\otimes  \beta''_i$. 
Observe that $\wedge(\wedge V \otimes B_*)$ is a DGA with the differential 
$d := d_A\otimes 1\pm 1\otimes d_{B *}$. 
The result \cite[Theorem 3.5]{B-S} implies that the composite 
$
\rho : \wedge(V \otimes B_*)  \hookrightarrow 
\wedge(\wedge V \otimes B_*) \to 
\wedge(\wedge V \otimes B_*)/I
$ 
is an isomorphism of graded algebras. Moreover, it follows that
\cite[Theorem 3.3]{B-S} that $dI \subset I$.   
Thus $(\wedge(V \otimes B_*), \delta = \rho^{-1}d\rho)$ is a DGA. 
Observe that, for an element $v \in V$ and a cycle $e \in B_*$,  
if  $d(v) = v_1\cdots v_m$ with $v_i \in V$ and 
$D^{(m-1)}(e_j) = \sum_je_{j_1}\otimes \cdots \otimes e_{j_m}$,
then
$$
\begin{array}{lcl}
\delta(v\otimes e) 
&=&\sum_j\pm (v_1\otimes e_{j_1}) \cdots (v_m\otimes e_{j_m}).  
\end{array}
\eqnlabel{add-1}
$$
Here the sign is determined by the Koszul rule; that is, 
$ab = (-1)^{\deg a \deg b} ba$  in a graded algebra.  
Let $F$ be the ideal of $E:=\wedge(V \otimes B_*)$ generated by 
$\oplus_{i < 0}E^i$ and $\delta (E^{-1})$. Then $E/F$ 
is a free algebra and $(E/F, \delta)$ is a Sullivan algebra 
(not necessarily connected), see the proofs of \cite[Theorem 6.1]{B-S}
and of \cite[Proposition 19]{B-M}.  

\begin{rem}
\label{rem:Lannes}
The result \cite[Corollary 3.4]{B-S} implies that there exists a natural
 isomorphism 
$
\text{DGA}(\wedge (\wedge V\otimes B_*)/I, C) \cong 
\text{DGA}(\wedge V, B\otimes C)
$
for any DGA $C$. Then $\wedge (\wedge V\otimes B_*)/I$ is regarded as
 the Lannes division functor $(\wedge V  \! : \! B)$ by definition. 
\end{rem}

The singular simplicial set of a topological space $U$ 
is denoted by $\Delta U$ 
and let $|X|$ be the geometrical realization of a simplicial
set $X$. By definition, $A_{PL}(U)$ the DGA of polynomial differential
forms on $U$ is given by $A_{PL}(U)= \Omega \Delta U$. 
Given spaces $X$ and $Y$, we denote by $\F(X, Y)$ the space of continuous
maps from $X$ to $Y$. The connected component of $\F(X, Y)$ containing a
map $f : X \to Y$ is denoted by $\F(X, Y; f)$.

Let 
$\alpha : A=(\wedge V, d)\stackrel{\simeq}{\to} A_{PL}(Y)=\Omega
\Delta Y$  be a Sullivan model (not necessarily minimal) 
for $Y$ and $\beta : (B, d) \stackrel{\simeq}{\to} A_{PL}(X)$ 
a Sullivan model for $X$ for which $B$ is connected and locally finite.  
For the function space $\F(X, Y)$ which is considered below,   
we assume that
%
$$
\dim \oplus_{q\geq 0}H^q(X; \K)< \infty \quad \text{or} \quad  
\dim \oplus_{i\geq 2}\pi_i(Y)\otimes \K < \infty.
\eqnlabel{add-2}
$$
Then the proof of \cite[Proposition 4.3]{Ku1} enables us to deduce the
following lemma; see also \cite{B-M}.  

\begin{lem}
\label{lem:key-1} 
{\em (i)} Let $\{b_j\}$ and $\{b_{j*}\}$ be 
a basis of $B$ and its dual basis of  $B_*$, respectively and 
$\widetilde{\pi} : \wedge(A\otimes B_*) \to 
(\wedge(A\otimes B_*)/I) \bigl/F \bigr.$ denote the projection. 
Define a map  
$m(ev) : A \to (\wedge(A\otimes B_*)/I)\bigl/F \bigr. \otimes B$ 
by  
$$m(ev)(x)= \sum_{j}(-1)^{\tau(|b_j|)}
\widetilde{\pi}(x\otimes b_{j *})\otimes b_j,    
$$
for $x \in A$, where 
$\tau(n)=[(n+1)/2]$, the greatest integer in  $(n+1)/2$.  
Then $m(ev)$ is a well-defined DGA map. \\
{\em (ii)} 
There exists a commutative diagram 
$$
\xymatrix@C25pt@R15pt{
\F(X_\K, Y_\K) \times X_\K  \ar[r]^(0.7){ev} & Y_\K \\
|\Delta(E/F)| \times |\Delta(B)| 
\ar[u]^{\Theta  \times 1} \ar[r]_(0.66){|\Delta m(ev)|} & 
 |\Delta(A)| \ar@{=}[u]
}
$$
in which $\Theta$ is the homotopy equivalence described in 
\cite[Sections 2 and 3]{B-S}; see also \cite[(3.1)]{Ku1}.  
\end{lem}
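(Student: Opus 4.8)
The plan is to construct the DGA map $m(ev)$ explicitly using the Brown--Szczarba machinery recalled above, verify it lands in the correct algebra, and then identify its geometric realization with the evaluation map. First I would address well-definedness in part (i). The candidate formula $m(ev)(x)=\sum_j(-1)^{\tau(|b_j|)}\widetilde{\pi}(x\otimes b_{j*})\otimes b_j$ uses a choice of homogeneous basis $\{b_j\}$ of $B$; the first task is to check the sum is independent of this choice and that the twisting sign $\tau(n)=[(n+1)/2]$ is forced by the requirement that $m(ev)$ be multiplicative. The natural way to see independence is to reinterpret $\sum_j \widetilde\pi(x\otimes b_{j*})\otimes b_j$ (up to sign) as the image of $x$ under the canonical element of $\mathrm{DGA}(\wedge V, B\otimes C)$ corresponding, via the adjunction of Remark~\ref{rem:Lannes} (that is, \cite[Corollary 3.4]{B-S}), to the identity-like projection $\wedge(\wedge V\otimes B_*)/I\to (\wedge(\wedge V\otimes B_*)/I)/F=C$; coassociativity and the duality between the product on $B$ and the coproduct $D$ on $B_*$ then make the basis-dependence disappear. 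Multiplicativity of $m(ev)$ comes from precisely the relations generating the ideal $I$: the element $a_1a_2\otimes\beta-\sum_i(-1)^{|a_2||\beta_i'|}(a_1\otimes\beta_i')(a_2\otimes\beta_i'')$ is killed in $\wedge(\wedge V\otimes B_*)/I$, which is exactly the identity needed once the basis expansions of $b_{j*}$ under $D$ are inserted; the sign $\tau$ is what reconciles the Koszul signs on the two sides.

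Next I would verify that $m(ev)$ is a chain map, i.e. commutes with the differentials $d_A$ on $A$ and $\delta\otimes 1\pm 1\otimes d_B$ on $(E/F)\otimes B$. Using formula~\eqnlabel{add-1} for $\delta$ on $E=\wedge(V\otimes B_*)$ together with the dual relationship between $d_{B*}$ and $d_B$, the computation of $\delta(\widetilde\pi(x\otimes b_{j*}))\otimes b_j$ reorganizes, after resumming over the basis, into $\widetilde\pi(d_A x\otimes b_{j*})\otimes b_j$ plus the term coming from $1\otimes d_B$; the facts $dI\subset I$ (\cite[Theorem 3.3]{B-S}) and that $F$ is a differential ideal ensure everything descends to the quotient $(E/F)$. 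One must also confirm that the image genuinely lies in $((E/F))\otimes B$ and not merely in $(E)\otimes B$ — this is where passing to $\widetilde\pi$ (rather than just $\rho^{-1}$) matters, and it is automatic because the formula is $\widetilde\pi$ applied termwise. Then part (ii) is the geometric step: apply the functor $|\Delta(-)|$ to the DGA map $m(ev)$; the source $|\Delta(E/F)|\times|\Delta(B)|\simeq |\Delta(E/F)\otimes B|$ maps to $|\Delta(A)|=Y_\K$, and one identifies $|\Delta(E/F)|$ with $\F(X_\K,Y_\K)$ via the Brown--Szczarba/Haefliger homotopy equivalence $\Theta$ of \cite[Sections 2 and 3]{B-S}, \cite[(3.1)]{Ku1}, and $|\Delta(B)|$ with $X_\K$ via $\beta$. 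The diagram then commutes up to homotopy because, by the adjunction $\mathrm{DGA}(\wedge(\wedge V\otimes B_*)/I,C)\cong\mathrm{DGA}(\wedge V,B\otimes C)$, the map $m(ev)$ is precisely the algebraic model of the adjunction unit, i.e. of evaluation; this is the content already proved in \cite[Proposition 4.3]{Ku1} and \cite[Proposition 19]{B-M}, which under hypothesis~\eqnlabel{add-2} guarantees $\F(X_\K,Y_\K)$ has the right homotopy type and that $\Theta$ exists.

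The main obstacle I anticipate is bookkeeping the signs — coordinating the Koszul convention $ab=(-1)^{\deg a\deg b}ba$, the twist $(-1)^{\tau(|b_j|)}$, the sign $(-1)^{|a_2||\beta_i'|}$ in the defining relations of $I$, and the sign in $\delta$ from \eqnlabel{add-1} — so that multiplicativity and the chain-map property hold on the nose rather than up to an elusive sign. A clean way to manage this is to avoid choosing bases where possible: work with the coevaluation/evaluation pairing $B_*\otimes B\to\K$ abstractly, establish naturality in $C$ first, and only at the end specialize $C=(E/F)$ and unwind into the stated basis formula, quoting \cite[Corollary 3.4]{B-S} for the adjunction and \cite[Theorems 3.3, 3.5]{B-S} for $dI\subset I$ and the isomorphism $\rho$. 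A secondary technical point is the non-connectedness of $E/F$ (the function space has many components), which is harmless here since $\Delta$ and $|\cdot|$ are applied without a connectivity assumption and the target component is pinned down by the chosen basepoint map; I would simply remark that $m(ev)$ restricts appropriately to the relevant component, matching $\F(X_\K,Y_\K;f)$ when needed later.
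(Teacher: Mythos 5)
Your proposal is correct and follows essentially the same path as the paper: the paper itself proves Lemma~\ref{lem:key-1} by citing the proof of \cite[Proposition 4.3]{Ku1} (together with \cite{B-M}), and your argument unpacks exactly what that proof contains, namely that part (i) is an instance of the Brown--Szczarba adjunction $\text{DGA}(\wedge(\wedge V\otimes B_*)/I, C)\cong\text{DGA}(\wedge V, B\otimes C)$ of \cite[Corollary 3.4]{B-S} applied with $C=(E/F)$ and $w$ the projection, so that $m(ev)=\Psi(w)$ is automatically a well-defined DGA map with the sign $\tau$ dictated by the twist, and part (ii) is the geometric identification via $\Theta$ that is the content of \cite[Proposition 4.3]{Ku1} and \cite[Proposition 19]{B-M}. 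One small caution: the lemma asserts strict commutativity of the diagram in (ii), not merely commutativity up to homotopy as you write at one point — with the explicit $\Theta$ of \cite[Sections 2 and 3]{B-S} and the identifications $|\Delta(A)|=Y_\K$, $|\Delta(B)|=X_\K$, the diagram does commute on the nose, and this strictness is what is actually used later (e.g.\ in diagram (3.3)).
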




We next recall a Sullivan model for a connected component 
of a function space. 
Choose a basis $\{a_k', b_k',  c_j' \}_{k,j}$ 
for $B_*$ so that $d_{B_*}(a_k')=b_k'$, 
$d_{B_*}(c_j')=0$ and $c_0' = 1$.
Moreover we take a basis $\{v_i\}_{i\geq 1}$ for $V$ such that
$\deg v_i \leq \deg v_{i+1}$ and $d(v_{i+1})\in \wedge V_i$, where
$V_i$ is the subvector space spanned by the elements $v_1 ,..., v_i$.
The result \cite[Lemma 5.1]{B-S} ensures that there exist free algebra
generators $w_{ij}$, $u_{ik}$ and $v_{ik}$ such that  

(2.3) $w_{i0}= v_i\otimes 1$  and
$w_{ij}= v_i \otimes c_j' + x_{ij}$, where $x_{ij}\in \wedge(V_{i-1} \otimes
B_*)$,

(2.4) $\delta w_{ij}$ is 
in $\wedge(\{w_{sl} ; s < i\})$,

(2.5) $u_{ik}= v_i\otimes a_k'$ and ${\delta}u_{ik}=v_{ik}$.

\noindent
We then have a inclusion 
$$
\gamma : E:=(\wedge(w_{ij}), \delta) \hookrightarrow  
(\wedge(V\otimes B_*), \delta), 
\leqno{(2.6)}
$$ 
which is a homotopy equivalence with a retract 
$$
r :  (\wedge(V\otimes B_*), \delta) \to E ;
\leqno{(2.7)} 
$$
see \cite[Lemma 5.2]{B-S} for more details. 
Let $q$ be a Sullivan representative for a map $f : X \to Y$; that is,
$q$ fits into the homotopy commutative diagram 
$$
\xymatrix@C30pt@R15pt{
\wedge W \ar[r]^(0.45){\simeq} & A_{PL}(X) \\
 \wedge V \ar@{>}[u]^q \ar[r]_(0.45){\simeq} & A_{PL}(Y) . 
  \ar[u]_{A_{PL}(f)}
}
$$
Moreover we define a $0$-simplex  
$\widetilde{u} \in \Delta(\wedge(\wedge V\otimes B_*)/I)_0$ 
by
$$
\widetilde{u}(a\otimes b)=(-1)^{\tau(|a|)}b(q(a)), 
\leqno{(2.8)} 
$$
where $a \in \wedge V$ and $b \in B_*$. Put $u= \Delta(\gamma)\widetilde{u}$.
Let $M_u$ be the ideal of $E$ generated by the set
$
\{\eta \mid \deg \eta < 0\} \cup \{\delta\eta \mid \deg \eta =
0\}\cup
\{\eta - u(\eta )\mid \deg \eta  = 0\}.
$
Then we see that  $(E/M_u, \delta)$ is an explicit model for the
connected component $F(X, Y ; f)$; see \cite[Theorem 6.1]{B-S} and 
\cite[Section 3]{H-K-O}. 
The proof of \cite[Proposition 4.3]{Ku1} and \cite[Remark 3.4]{H-K-O}
allow us to deduce the following proposition; see also \cite{B-M}.  

\begin{prop}
\label{prop:ev-model} With the same notation as in Lemma
 \ref{lem:key-1}, we define a map  
$m(ev) : A=(\wedge V, d) \to  (E/M_u, \delta)\otimes B$ 
by  
$$m(ev)(x)= \sum_{j}(-1)^{\tau(|b_j|)}
\pi\circ r(x\otimes b_{j *})\otimes b_j,    
$$
for $x \in A$, where $\pi : E \to E/M_u$ denotes the natural projection.  
Then $m(ev)$ is a model for the evaluation map 
$ev : \F(X, Y; f) \times X \to Y$; that is, there exists a homotopy
 commutative diagram 
$$
\xymatrix@C30pt@R15pt{
A_{PL}(Y) \ar[r]^(0.4){A_{PL}(ev)}  &  A_{PL}(\F(X, Y; f) \times X)  \\
  & A_{PL}(\F(X, Y; f))\otimes A_{PL}(X) \ar[u]_{\simeq}\\
 A \ar@{>}[uu]_{\simeq}^{\alpha} \ar[r]_(0.4){m(ev)} & 
 (E/M_u, \delta)\otimes B,  \ar[u]^{\simeq}_{\xi \otimes \beta}
}
$$
in which 
$\xi : (E/M_u, \delta) \stackrel{\simeq}{\to} A_{PL}(\F(X,Y; f))$ 
is the Sullivan model for $\F(X,Y; f)$ due to Brown and Szczarba
 \cite{B-S}.   
\end{prop}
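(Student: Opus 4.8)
The plan is to bootstrap Proposition~\ref{prop:ev-model} from the already-established Lemma~\ref{lem:key-1}, in which the role of the non-connected Sullivan algebra $E/F$ is played by the (connected) Brown--Szczarba model $E/M_u$ for the single component $\F(X,Y;f)$. First I would recall the precise relationship, due to \cite[Theorem 6.1]{B-S}, between $E/F$ and $E/M_u$: the component $\F(X_\K,Y_\K;f)$ corresponds, under the homotopy equivalence $\Theta$ of Lemma~\ref{lem:key-1}(ii), to the component of $|\Delta(E/F)|$ determined by the $0$-simplex $u$, and $(E/M_u,\delta)$ is the Sullivan model of that component. Concretely, $E/M_u$ is obtained from the generators $w_{ij}$ of (2.3)--(2.5) by the retraction $r$ of (2.7) followed by the quotient that kills negative degrees, kills $\delta$ of degree-zero generators, and translates each degree-zero generator $\eta$ by its value $u(\eta)$ under the $0$-simplex built from the Sullivan representative $q$ of $f$ via (2.8). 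The map $\xi$ in the statement is exactly the Sullivan model for $\F(X,Y;f)$ obtained by restricting the Brown--Szczarba model of the whole function space to this component.

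Granting that, the core of the proof is to check that the formula
$$
m(ev)(x)=\sum_j(-1)^{\tau(|b_j|)}\,\pi\circ r(x\otimes b_{j*})\otimes b_j
$$
defines a DGA map $A\to (E/M_u,\delta)\otimes B$ and fits into the stated homotopy-commutative square. For well-definedness and the DGA property, I would argue that $\pi\circ r$ is a composite of DGA maps: $r$ is the retraction of (2.7), which is a morphism of DGA's onto $E=(\wedge(w_{ij}),\delta)$ by \cite[Lemma 5.2]{B-S}, and $\pi:E\to E/M_u$ is the quotient by the differential ideal $M_u$; composing with the coalgebra structure maps of $B_*$ exactly as in Lemma~\ref{lem:key-1}(i) shows $m(ev)$ respects products (the multiplicative defect is governed by the ideal $I$, which is already quotiented out) and commutes with differentials (using formula (\ref{add-1}) for $\delta$ together with $d_B$ on the $B$-factor, and the sign convention $\tau(n)=[(n+1)/2]$ chosen precisely to make the Koszul signs cancel). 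This is essentially the computation of \cite[Proposition 4.3]{Ku1} and \cite[Remark 3.4]{H-K-O}, transported verbatim from $E/F$ to $E/M_u$; I would invoke those references for the bookkeeping rather than redo it.

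For the homotopy commutativity of the diagram, I would proceed by restriction from Lemma~\ref{lem:key-1}(ii). Pulling the diagram of Lemma~\ref{lem:key-1}(ii) back along the inclusion $\F(X_\K,Y_\K;f)\hookrightarrow \F(X_\K,Y_\K)$ and applying $A_{PL}$, one gets a homotopy-commutative square relating $A_{PL}(ev)$ on the component to $|\Delta m(ev)|$ precomposed with the equivalence $|\Delta(E/F)|\supset(\text{component})\simeq |\Delta(E/M_u)|$. The passage from the geometric/simplicial picture $(\Delta,\Omega,|\cdot|)$ back to the DGA picture is the standard adjunction of Bousfield--Gugenheim \cite{B-G}: applying $\Omega$ and using the unit $A\to\Omega\Delta(A)\simeq A_{PL}(Y)$ together with $\xi$ and $\beta$ turns $|\Delta m(ev)|$ into the DGA map $m(ev)$ up to homotopy, yielding the asserted square with the identification $ev\simeq$ restriction of the evaluation on the localized function space.

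The step I expect to be the main obstacle is the careful matching of the two presentations of the function-space model---verifying that $r$, $\pi$, and the $0$-simplex $\widetilde u$ of (2.8) are mutually compatible so that $\pi\circ r(x\otimes b_{j*})$ lands in $E/M_u$ as claimed and so that the translated generators coming from $M_u$ correctly encode the choice of component $f$---together with keeping every Koszul sign straight through the coalgebra coproduct $D^{(m-1)}$ in (\ref{add-1}). Once the sign conventions in Lemma~\ref{lem:key-1}(i) are reconciled with those in the definition of $M_u$ and of $u=\Delta(\gamma)\widetilde u$, the rest is a formal consequence of \cite[Theorem 6.1]{B-S}, \cite[Proposition 4.3]{Ku1} and \cite[Remark 3.4]{H-K-O}.
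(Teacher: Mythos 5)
Your proposal is correct and matches the paper's own approach, which in fact gives no detailed proof here but simply invokes the same sources you rely on, namely \cite[Proposition 4.3]{Ku1}, \cite[Remark 3.4]{H-K-O}, \cite[Theorem 6.1]{B-S} and \cite{B-M}, to pass from the unrestricted model $E/F$ of Lemma \ref{lem:key-1} to the component model $E/M_u$. Your outline of why $\pi\circ r$ makes $m(ev)$ a DGA map and why the square commutes up to homotopy after restricting to the component and using the Bousfield--Gugenheim adjunction is exactly the bootstrapping the author has in mind.
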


We call the DGA $(E/M_u, \delta)$ the Haefliger-Brown-Szczarba model 
(HBS-model for short) 
for the function space  $\F(X,Y; f)$. 

\begin{ex}
\label{ex:evaluation-map} 
Let $M$ be a space whose rational cohomology is isomorphic to the
 truncated algebra $\K[x]/(x^m)$, where $\deg x = l$.   
Recall the model  $(E/M_u, \delta)$ 
for $\text{aut}_1(M)$ mentioned in \cite[Example 3.6]{H-K-O}. 
Since the minimal model for $M$ has the form 
$(\wedge (x, y), d)$ with $dy=x^m$, it follows that 
$$
E/M_u 
= \wedge (x\otimes 1_*, y\otimes(x^s)_* ; 0\leq s\leq m-1)
$$ 
with $\delta(x\otimes 1_*)= 0$ and  
$\delta(y\otimes(x^s)_*)=(-1)^{s}
\left(
\begin{array}{c}
m \\
s
\end{array}
\right)
(x\otimes 1_*)^{m-s}$, where 
$\deg x\otimes 1_* =l$ and $\deg (y\otimes (x^s)_*) =  lm-ls-1$.   
Then the rational model $m(ev)$ for the evaluation map 
$ev : \aut{M} \times M 
\to M$ 
is given by 
$m(ev)(x)=(x\otimes 1_*)\otimes 1 + 1\otimes x$ and 
$$
m(ev)(y)= \sum_{s=0}^{m-1}(-1)^s(y\otimes (x^s)_*)\otimes x^s + 
1\otimes y. 
$$
\end{ex}

\begin{rem}
\label{rem:variants} 
We here describe variants of the HBS-model for a function space. \\
(i) Let $\wedge \widetilde{V} \stackrel{\simeq}{\to} A_{PL}(Y)$ be a
Sullivan model (not necessarily minimal) and 
$B \stackrel{\simeq}{\to} A_{PL}(X)$ a Sullivan model of finite
type. 
We recall the homotopy equivalence 
$\gamma : E \to \widetilde{E}=\wedge(\wedge V\otimes B_*)/I$ mentioned
in (2.6).  
Let $\widetilde{u} \in \Delta(\widetilde{E})_0$ be a $0$-simplex and 
$u$ a $0$-simplexes of $E$ 
defined by composing $\widetilde{u}$ with the quasi-isomorphism $\gamma$.  
Then the induced map 
$\overline{\gamma} : E/M_u \to \widetilde{E}/M_{\widetilde{u}}$ 
is a quasi-isomorphism.  
In fact the results \cite[Theorem 6.1]{B-S} and  \cite[Proposition 19]{B-M}
imply that the projections onto the quotient DGA's $E/M_u$ and 
$\widetilde{E}/M_{\widetilde{u}}$ induce homotopy equivalences  
$\Delta(p) : \Delta(E/M_u) \to \Delta(E)_{u}$ and  
$\Delta(\widetilde{p}) : \Delta(\widetilde{E}/M_{\widetilde{u}}) \to 
\Delta(\widetilde{E})_{\widetilde{u}}$, respectively. 
Here $K_v$ denotes the connected component containing the vertex $v$ for
 a simplicial set $K$, namely, the set of simplices all of whose faces
 are at $v$. 

Then we have a commutative diagram  
$$
 \xymatrix@C30pt@R20pt{
\pi_*(|\Delta(E/M_u)|) \ar[r]^{|\Delta (p)|}_{\cong} & \pi_*(|\Delta(E)|, |u|)
\\
\pi_*(|\Delta(\widetilde{E}/M_{\widetilde{u}})|) 
\ar[r]_{|\Delta (\widetilde{p})|}^{\cong} 
\ar[u]^{|\Delta(\overline{\gamma})|_*}
 & \pi_*(|\Delta(\widetilde{E})|, |\widetilde{u}|).  
\ar[u]_{|\Delta(\gamma)|_*}
}
$$
Since $\gamma$ is a homotopy equivalence, 
it follows that
$|\Delta(\gamma)|_*$ is an isomorphism and hence so is 
$|\Delta(\overline{\gamma})|_*$. 
This yields that $|\Delta(\overline{\gamma})|$ is 
a homotopy equivalence. By virtue of the
Sullivan-de Rham equivalence Theorem \cite[9.4]{B-G}, 
we see that $\overline{\gamma}$ is a quasi-isomorphism. 

As in Lemma \ref{lem:key-1}, we define the DGA map 
$\widetilde{m(ev)} : (\wedge V, d) \to
 \widetilde{E}/\widetilde{F}\otimes B$ and let 
$m(ev) : (\wedge V, d) \to \widetilde{E}/M_{\widetilde{u}}\otimes B$ be
 the DGA defined by $m(ev) = \pi\otimes 1 \circ \widetilde{m(ev)}$.   
 We then have a homotopy commutative diagram
$$
\xymatrix@C35pt@R5pt{
 & E/M_u \otimes B \ar[dd]^{\overline{\gamma}\otimes 1}_{\simeq}\\
\wedge V \ar[ru]^{m(ev)} \ar[rd]_{m(ev)} & \\
& \widetilde{E}/M_{\widetilde{u}}\otimes B. 
}
$$
In fact the homotopy between $id_{\widetilde{E}}$ and $\gamma\circ r$
 defined in \cite[Lemma 5.2]{B-S} induces a homotopy between 
$id_{\widetilde{E}/\widetilde{F}}$ and 
$\gamma \circ r : \widetilde{E}/\widetilde{F} \to E/F \to 
\widetilde{E}/\widetilde{F}$. 
It is immediate that $r\circ \gamma = id_{E/F}$.  
Let $m(ev)' : \wedge V \to E/F\otimes B$ be the DGA defined as in 
Proposition \ref{prop:ev-model}. Then it follows that 
\begin{eqnarray*}
\overline{\gamma}\otimes 1\circ m(ev) &=& 
\overline{\gamma}\otimes 1\circ \pi \otimes 1 \circ m(ev)' \\
&=& \pi \otimes 1\circ \gamma \otimes 1\circ r\otimes 1 \circ
 \widetilde{m(ev)} \\
&\simeq&  \pi \otimes 1\circ  \widetilde{m(ev)} =  m(ev). 
\end{eqnarray*}

\noindent
(ii) In the case where $X$ is formal, we have a more tractable model for
$\F(X, Y; f)$. Suppose that $X$ is a formal space with 
a minimal model $(B, d_B)=(\wedge W', d)$. 
Then there exists a quasi-isomorphism 
$k : (\wedge W', d) \to H^*(B)$ which is surjective; 
see \cite[Theorem 4.1]{DGMS}.  With the notation mentioned above, 
let $\{e_j\}_j$ be a basis for  
the homology $H(B_*)$ of the differential graded coalgebra $B_*=(\wedge W')_*$ 
and $\{v_i\}_i$ a basis for $V$.  
Then it follows from the proof of \cite[Theorem 1.9]{B-S} 
that the subalgebra $\K\{v_i\otimes e_j\}$ is closed for 
the differential $\delta$ and that 
the inclusion $\K\{v_i\otimes e_j\} \to \wedge (W\otimes B_*)=\widetilde{E}$ 
gives rise to a homotopy equivalence 
$$\gamma : E'
:=(\wedge (v_i\otimes e_j), \delta) \to 
(\wedge (W\otimes B_*), \delta)=\widetilde{E}.     
$$
In fact, the elements $w_{ij}$ in (2.3) can be chosen so that
 $w_{i0}=v_i\otimes 1_*$ and $w_{ij}= v_i\otimes e_j$ for $j \geq 1$.   
Moreover we see that there exists a retraction 
$r : \wedge (W\otimes B_*) \to E'$ which is the homotopy 
inverse of $\gamma$. 
Thus Proposition \ref{prop:ev-model} remains true after replacing $E$
by $E'$. 
Here the $0$-simplex 
$\widetilde{u} \in \Delta(\wedge(W \otimes B_*))_0$ 
needed in the construction of the model for $\F(X, Y; f)$ 
has the same form as in (2.8).  
\end{rem}

We conclude this section with some comments on models for a connected
component of a function space and related maps.  

In the original construction in \cite{H} and \cite{B-M} of a model for a
function space $\F(X, Y)$, 
it is assumed that the source space $X$ admits a finite dimensional
model. Indeed the construction of a model for the evaluation map in  
\cite[Theorem 1]{B-M} requires existence of such 
a model for the space $X$. 
As described in Lemma \ref{lem:key-1} and Proposition
\ref{prop:ev-model}, 
our construction only needs the assumption (2.2). 
Thus our model for a function space endowed with a model for evaluation
map is viewed as a generalization of that in \cite{B-M}.     

The arguments in \cite[Section 7]{B-S} and \cite{B-M} on a model for a
connected component of $\F(X, Y)$ begin with a $0$-simplex; that is, the
considered component is that containing a map $f$ which 
corresponds to the given $0$-simplex via a sequence of weak
equivalences between the singular simplicial set of $\F(X_\K, Y_\K)$ and the
simplicial set $\Delta(E/F)$; see \cite[(2.3)]{H-K-O}. 
On the other hand, for any given map $f : X \to Y$, an explicit form of 
a $0$-simplex corresponding to $f$ is clarified in 
\cite[Remark 3.4]{H-K-O} with (2.8). Thus our constructions in this
section complement the basic constructions in rational homotopy theory
of function spaces due to Buijs and Murillo \cite{B-M}.

Observe that $\aut{X}$ is 
nothing but the function space $\F(X, X; id_{M})$. 
Moreover, for a manifold $M$, the function space $\aut{M}$ satisfies the
assumption (2.2). Thus we have explicit models for $\aut{X}$ and for the
evaluation map according to the procedure in this section.  
Adding such the models, we moreover provide 
an elaborate model for the map $\lambda_{G, M}$ mentioned 
in Introduction in the next section; see Theorem \ref{thm:key} below.

\section{A rational model for the map $\lambda $ induced by 
left translation}

Let $M$ be a space admitting an action of Lie group $G$ on the left. 
We define the map $\lambda : G \to \text{aut}_1(M)$ by $\lambda(g)(x)=gx$. 
The subjective in this section is to construct an algebraic model for
the map 
$$
in \circ \lambda : G \to \text{aut}_1(M) \to \F(M, M),  
$$
where $in : \aut{M} \to  \F(M, M)$ denotes the inclusion. 
To this end we use a model for the evaluation map 
$$
ev : \F(X, Y)\times X \to Y
$$
defined by $ev(f)(x)= f(x)$ for $f \in \F(X, Y)$ and $x \in X$, which
is considered in \cite{Ku1} and \cite{B-M}.

Let $G$ be a connected Lie group, $U$ a closed subgroup of $G$ and $K$ a
closed subgroup which contains $U$.   
Let $(\wedge V_G, d)$ and $(\wedge W, d)$ denote a minimal model for 
$G$ and a Sullivan model for the homogeneous space $G/U$, respectively. 
Let $\lambda : G \to \F(G/U, G/K)$ be the adjoint of the composite 
of the left translation $G\times G/U \to G/U$ and
projection $p : G/U \to G/K$. Observe that the map 
$\lambda$ coincides with the composite 
$$
p_*\circ in \circ \lambda_{G, G/U} : G \to \text{aut}_1(G/U) 
\to \F(G/U, G/U) \to \F(G/U, G/K).  
$$
We shall construct a model for $\lambda$ 
by using the HBS-model for $\F(G/U, G/K; p)$ mentioned 
in Remark \ref{rem:variants} (i), a Sullivan representative  
$$
\zeta' : \wedge W \to \wedge V_G\otimes \wedge W'
$$
for the composite $G\times G/U \to G/K$ 
of the left translation  $G\times G/U \to G/U$ and the projection 
$p : G/U \to G/K$. 

Let $A$, $B$ and $C$ be connected DGA's.   
Recall from \cite[Section 3]{B-S} the bijection 
$\Psi : (A\otimes B_*, C)_{DG} \stackrel{\cong}{\to} 
(A, C\otimes B)_{DG}$
defined by 
$$
\Psi(w)(a) =  \sum_{j}(-1)^{\tau(|b_j|)}w(a\otimes b_{j *})\otimes b_j. 
$$ 
Consider the  case where $A=(\wedge W, d)$, $B=(\wedge W', d)$ 
and $C=(\wedge V_G, d)$. 
Moreover define a map 
$
\widetilde{\mu} : \wedge (A\otimes B_*) \to \wedge V_G
$
by 
$$
\widetilde{\mu}(y \otimes b_{j *}) 
= (-1)^{\tau(|b_j|)} \langle \zeta' (y), b_{j *} \rangle,  
\eqnlabel{add-0}
$$
where $\langle \ \ , b_{j *} \rangle : \wedge V_G \otimes \wedge W' \to 
\wedge V_G$ is a map defined by $\langle x \otimes a , b_{j *} \rangle =
x\cdot \langle a,b_{j *} \rangle$.  
Then we see that $\Psi(\widetilde{\mu})= \zeta'$. Hence it follows from
\cite[Theorem 3.3]{B-S} that 
$$
\widetilde{\mu} : \widetilde{E}:= \wedge (A\otimes B_*)/I \to \wedge V_G 
$$
is a well-defined DGA map. 
We define an augmentation $\widetilde{u} : \widetilde{E} \to \K$ by 
$\widetilde{u} = \varepsilon \circ \widetilde{\mu}$, where 
$\varepsilon : \wedge V_G \to \K$ is the augmentation. It is readily
seen that that $\widetilde{\mu}(M_{\widetilde{u}})=0$. 
Thus we see that 
$\widetilde{\mu}$ induces a DGA map  
$\widetilde{\widetilde{\mu}} : \widetilde{E}/M_{\widetilde{u}} 
\to \wedge V_G$.

The result \cite[Theorem 3.11]{H-M-R} asserts that the map  
$$
e_\sharp : \F(G/U, (G/K); p) \to\F(G/U, (G/K)_\K; e\circ p)
$$ is a localization. Thus we have the localization     
$\lambda_\K : G_\K \to \F( G/U, (G/K)_\K; e\circ p)$. Observe that 
$\lambda_\K$ fits into the homotopy commutative diagram 
$$
\xymatrix@=18pt{
G_\K \ar[r]^(0.25){\lambda_\K} & \F( G/U, (G/K)_\K; e\circ p) \\
G \ar[u]^{e} \ar[r]_(0.25){\lambda} & 
\F( G/U, (G/K); p), \ar[u]_{e_{\sharp}}
}
$$
where $e$ denotes the localization map. 

We then have a recognition principle for rational visibility.

\begin{thm}
\label{thm:key}
Let $\{x_i\}_i$ be a basis for the image of the induced map 
$$
H^*(Q(\widetilde{\widetilde{\mu}})) : 
H^* (Q(\widetilde{E}/M_{\widetilde{u}}), \delta_0) 
\to H^*(Q(\wedge V_G), d_0)=V_G. 
$$ 
Then there exists a map $\rho : \times_{j=1}^sS^{\deg x_i} \to G$ such 
that the map 
$$
(\lambda_\K \circ \rho_\K)_* : 
\pi_*((\times_{j=1}^sS^{\deg x_i})_\K) \to \pi_*(\F(G/U, (G/K)_\K),
 e\circ p)
$$
is injective.  
Moreover $(\lambda_\K)_* : \pi_i(G_\K) \to \pi_i(\F(G/U, (G/K)_\K), e\circ p)$
is injective if and only if $H^i(Q(\widetilde{\widetilde{\mu}}))$ is
 surjective. 
\end{thm}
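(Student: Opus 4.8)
The plan is to extract the needed injectivity statements on rational homotopy from the algebraic map $\widetilde{\widetilde{\mu}} : \widetilde{E}/M_{\widetilde{u}} \to \wedge V_G$ by passing to the homotopy dual. First I would observe that, by Proposition \ref{prop:ev-model} together with Remark \ref{rem:variants}(i), the DGA $(\widetilde{E}/M_{\widetilde{u}}, \delta)$ is a Sullivan model for the connected component $\F(G/U, (G/K)_\K; e\circ p)$, so that $H^*(Q(\widetilde{E}/M_{\widetilde{u}}), \delta_0)$ is (dual to) the rational homotopy of that function space. Likewise $(\wedge V_G, d)$ is the minimal model of $G$, so $H^*(Q(\wedge V_G), d_0) = Q(\wedge V_G) = V_G$ is dual to $\pi_*(G)\otimes\K$. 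The central point is that $\widetilde{\widetilde{\mu}}$ is a model for $\lambda : G \to \F(G/U, G/K; p)$ (localized); this is exactly what the preceding construction in Section 3 establishes via the bijection $\Psi$ and the identity $\Psi(\widetilde{\mu}) = \zeta'$, where $\zeta'$ is a Sullivan representative for the composite $G\times G/U \to G/K$. Granting that, $(\lambda_\K)^*$ on rational homotopy groups is identified with $H^*(Q(\widetilde{\widetilde{\mu}}))$ up to the canonical dualities, so $(\lambda_\K)_*$ in degree $i$ is injective if and only if $H^i(Q(\widetilde{\widetilde{\mu}}))$ is surjective; this gives the ``if and only if'' clause.

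For the first assertion, let $\{x_i\}_{i=1}^s$ be a homogeneous basis for the image of $H^*(Q(\widetilde{\widetilde{\mu}}))$ inside $V_G$, with $\deg x_i$ recorded. Since $V_G$ is concentrated in odd degrees (the minimal model of a Lie group has only odd generators with zero differential), each $x_i$ is a generator dual to a sphere $S^{\deg x_i}$ in the rational homotopy of $G$. I would then build a map $\rho : \times_{j=1}^s S^{\deg x_i} \to G$ by choosing, for each $i$, a map $S^{\deg x_i} \to G_\K$ realizing the homotopy class dual to $x_i$ and assembling them by the $H$-space (in fact group) multiplication of $G$; more precisely one first produces $\rho_\K : (\times_j S^{\deg x_i})_\K \to G_\K$ with the property that $\rho_\K^*$ is surjective onto the span of the $x_i$ in $V_G$ — dually, $(\rho_\K)_*$ injective onto the subspace spanned by the corresponding spheres — and then, since a finite product of spheres is a finite complex, descends $\rho_\K$ to an integral map $\rho$ after clearing denominators (using that the target is a topological group, so self-maps and their localizations are well understood). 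Composing, $(\lambda_\K\circ\rho_\K)^* = \rho_\K^* \circ \widetilde{\widetilde{\mu}}^{\,Q}$ on indecomposables; on the subspace spanned by $\{x_i\}$, which is by construction contained in the image of $H^*(Q(\widetilde{\widetilde{\mu}}))$, the composite is surjective onto $\langle x_i\rangle \cong \pi_*((\times_j S^{\deg x_i})_\K)^\vee$. Dualizing once more yields that $(\lambda_\K\circ\rho_\K)_*$ is injective on $\pi_*((\times_j S^{\deg x_i})_\K)$.

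The step I expect to be the main obstacle is the honest construction of the \emph{integral} map $\rho$ (as opposed to its rationalization) together with the verification that precomposition with $\rho_\K$ really picks out exactly the image subspace. Rationally everything is transparent once $\widetilde{\widetilde{\mu}}$ is identified as a model for $\lambda$, but to land the conclusion in $\text{aut}_1$ and subsequently in $\text{Diff}_1, \text{Homeo}_1$ (as Corollary \ref{cor:Diff} wants) one needs $\rho$ defined on the actual product of spheres; here I would lean on the fact that $G$ is a topological group, so maps out of a finite complex into $G$ can be multiplied, and on the standard fact that a map from a finite complex into the rationalization of a nilpotent space factors through a finite-stage rationalization, allowing one to realize $\rho_\K$ by $\rho$ after multiplying by a suitable integer — which does not affect injectivity on rational homotopy. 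A secondary routine point is to check that $H^*(Q(\widetilde{\widetilde{\mu}}))$ is genuinely computed by the linear part $\delta_0$, i.e. that the relevant spectral-sequence/filtration degeneracy holds so that $Q$ of the quasi-isomorphism class is well-defined; this follows from minimality considerations on $\wedge V_G$ and functoriality of $Q$ on the homotopy category of DGAs.
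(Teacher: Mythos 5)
Your plan is correct and follows essentially the same route as the paper: reduce to showing $\widetilde{\widetilde{\mu}}$ is a Sullivan representative for $\lambda_\K$ (which the paper does via Lemma~\ref{lem:homotopy} and the commutative diagrams (3.5)--(3.7) culminating in the Lifting Lemma), then identify $H^*(Q(\widetilde{\widetilde{\mu}}))$ with the dual of $(\lambda_\K)_*$ on rational homotopy via the $\nu$-isomorphisms of \cite[6.4, 8.13]{B-G}, and finally assemble $\rho$ from maps $f_i : S^{\deg x_i}\to G$ realizing (multiples of) the classes dual to the $x_i$, multiplied together via the group structure of $G$. The only point worth flagging is that you treat the identification of $\widetilde{\widetilde{\mu}}$ as a model for $\lambda$ as already done by the constructions preceding the theorem, whereas the paper's proof actually spends most of its length establishing exactly this (Lemma~\ref{lem:homotopy} plus the diagram chase); your instinct that it is the central point is right, but it is the content of the proof, not a prior given.
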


In order to prove Theorem \ref{thm:key}, 
it suffices to show that $\widetilde{\widetilde{\mu}}: 
\widetilde{E}/M_{\widetilde{u}} \to \wedge V_G$ is a Sullivan model for
the map $G_\K \to \F(G/U, (G/K)_\K; e\circ p)$. 

We first observe that the diagram 
$$
\xymatrix@C15pt@R15pt{
\wedge V_G \otimes \wedge W'  
     & & (\wedge (A\otimes B_*)/I)/F \otimes \wedge W'
 \ar[ll]_(0.6){\widetilde{\mu}\otimes 1} \\
& \wedge W \ar[ul]^{\zeta'} \ar[ur]_{m(ev)} &
}
\eqnlabel{add-2}
$$ 
is commutative.  Thus Lemma \ref{lem:key-1} enables us to obtain a 
commutative diagram 
$$
\xymatrix@C12pt@R15pt{
|\Delta\wedge V_G| \times  |\Delta\wedge W'| 
\ar[dr]_{|\Delta\zeta'|=\text{action}_\K} 
     \ar[rr]^{(\Theta \circ |\Delta\widetilde{\mu}|) \times 1} & & 
 \F((G/U)_\K, (G/K)_\K )\times (G/U)_\K 
 \ar[dl]^{ev} \\ 
& |\Delta\wedge W| = (G/K)_\K . 
}
\eqnlabel{add-2}
$$
%
%
%
Observe that the assumption (2.2) is satisfied in the case where we
here consider.  

Since the restriction $|\Delta\zeta'|_{|_{*\times |\Delta\wedge W|}}$ 
is homotopic to $p_\K$, it
follows from the commutativity of the diagram (3.3) that 
$p_\K \simeq \Theta \circ |\Delta\widetilde{\mu}|(*)$. This implies that 
$\Theta \circ |\Delta\widetilde{\mu}|$ maps $G_\K$ into the function space 
$\F((G/U)_\K, (G/K)_\K ; p_\K)$.

\begin{lem}
\label{lem:homotopy} Let $\lambda_\K : G_\K \to \F( G/U, (G/K)_\K;
 e\circ p)$ be
 the localized map of $\lambda$ mentioned above and 
$e^\sharp : \F((G/U)_\K, (G/K)_\K ; p_\K) \to 
 \F((G/U), (G/K)_\K ; e\circ p)$ the map induced by the localization 
$e : (G/U) \to (G/U)_\K$. Then 
$$  
e^\sharp \circ \Theta \circ |\Delta\widetilde{\mu}|\simeq \lambda_\K : 
G_\K \to \F((G/U), (G/K)_\K ; e\circ p). 
$$
\end{lem}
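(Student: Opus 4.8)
The plan is to verify Lemma~\ref{lem:homotopy} by tracking the two maps through the chain of weak equivalences of Brown--Szczarba and Haefliger, and showing that they both correspond to the same adjoint data, namely the action of $G$ on $G/U$ composed with the projection $p$. First I would recall that, by Lemma~\ref{lem:key-1}~(ii) together with diagram~(3.3), the composite $\Theta\circ|\Delta\widetilde{\mu}|:G_\K\to \F((G/U)_\K,(G/K)_\K;p_\K)$ has the property that, upon post-composing with the evaluation map $ev$ and using that $|\Delta\zeta'|=\text{action}_\K$ is the localized action map, its adjoint $G_\K\times(G/U)_\K\to (G/K)_\K$ is homotopic to $(p\circ\text{action})_\K$. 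In other words, $\Theta\circ|\Delta\widetilde{\mu}|$ is, up to homotopy, the rationalization of the adjoint of the left translation followed by $p$.

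Next I would compare this with $\lambda_\K$. Recall from the homotopy commutative square just before Theorem~\ref{thm:key} that $\lambda_\K:G_\K\to \F(G/U,(G/K)_\K;e\circ p)$ is characterized by $\lambda_\K\circ e\simeq e_\sharp\circ\lambda$, where $\lambda$ is the genuine adjoint of the composite action. Since $e^\sharp$ (restriction along $e:G/U\to(G/U)_\K$) and $e_\sharp$ (post-composition with $e:(G/K)\to(G/K)_\K$, which is here the localization described by \cite[Theorem~3.11]{H-M-R}) are compatible, both $e^\sharp\circ\Theta\circ|\Delta\widetilde{\mu}|$ and $\lambda_\K$ become maps $G_\K\to \F(G/U,(G/K)_\K;e\circ p)$ whose adjoints $G\times(G/U)\to (G/K)_\K$ agree up to homotopy with $e\circ p\circ\text{action}$. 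The key point is then that the adjunction $\F(G/U,(G/K)_\K)\leftrightarrow \F(G\times G/U,(G/K)_\K)$, restricted to the relevant components and after localizing $G$, is a bijection on homotopy classes; this is the content of the function-space adjunction together with the fact that $G_\K$ is the localization of $G$. Hence equality of adjoints forces $e^\sharp\circ\Theta\circ|\Delta\widetilde{\mu}|\simeq\lambda_\K$.

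The step I expect to be the main obstacle is the bookkeeping of basepoints and connected components: one must check that $\Theta\circ|\Delta\widetilde{\mu}|$ actually lands in the component $\F((G/U)_\K,(G/K)_\K;p_\K)$ (this was arranged in the paragraph preceding the lemma, using that $\Theta\circ|\Delta\widetilde{\mu}|(*)\simeq p_\K$), and that after applying $e^\sharp$ this component maps into $\F(G/U,(G/K)_\K;e\circ p)$, the component containing $\lambda_\K(*)$. One then needs that on the level of the relevant path components the adjunction is still a homotopy bijection, which follows because the evaluation-map model of Proposition~\ref{prop:ev-model} identifies these function-space components functorially. A secondary technical point is that the Sullivan--de Rham machinery is being used to pass between the simplicial-set models $|\Delta(-)|$ and the actual topological function spaces; here assumption~(2.2) is what guarantees that $|\Delta(E/F)|$ (respectively $|\Delta(E/M_u)|$) is weakly equivalent to $\F((G/U)_\K,(G/K)_\K)$ (respectively to the relevant component), so that the diagram chase above takes place in the correct homotopy category.

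Once Lemma~\ref{lem:homotopy} is in hand, Theorem~\ref{thm:key} follows formally: $e^\sharp$ is a weak equivalence by \cite[Theorem~3.11]{H-M-R}, so $\Theta\circ|\Delta\widetilde{\mu}|$ and $\lambda_\K$ have the same effect on rational homotopy groups, whence $\widetilde{\widetilde{\mu}}:\widetilde{E}/M_{\widetilde{u}}\to\wedge V_G$ is a Sullivan model for $\lambda_\K$, and the surjectivity of $H^i(Q(\widetilde{\widetilde{\mu}}))$ translates directly into injectivity of $(\lambda_\K)_*$ on $\pi_i$ by the standard dictionary between the linear part of a model and the dual of the homotopy homomorphism.
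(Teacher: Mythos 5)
Your proposal follows essentially the same route as the paper's proof: the paper also compares the two maps via the adjunction $\theta : [G\times G/U, (G/K)_\K] \cong [G, \F(G/U, (G/K)_\K)]$, identifies $\Theta\circ|\Delta\widetilde{\mu}|$ as $\theta(\text{action}_\K)$ via diagram~(3.3), uses the characterization $\lambda_\K\circ e\simeq e_\sharp\circ\lambda$, and concludes by noting that $e^*:[G_\K,\F(G/U,(G/K)_\K;e\circ p)]\to[G,\F(G/U,(G/K)_\K;e\circ p)]$ is a bijection because $e:G\to G_\K$ is a localization and the target is a rational space (this last fact being guaranteed by \cite[Theorem~3.11]{H-M-R}). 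The paper organizes these steps as a commuting diagram of hom-sets (diagram~(3.4)) rather than as a verbal comparison of adjoints, but the underlying argument and the ingredients you invoke are the same.
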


\begin{proof}
Consider the commutative diagram 
$$
\xymatrix@=18pt{
[G\times G/U, G/K] \ar[d]_{e_*} 
 \ar[r]^{\theta}_{\approx} & [G, \F(G/U, G/K)] \ar[d]^{(e_\sharp)_*} \\
  [G\times G/U, (G/K)_\K]  \ar[r]^{\theta}_{\approx} & 
               [G, \F(G/U, (G/K)_\K)]  \\
[G_\K \times (G/U)_\K, (G/K)_\K] \ar[u]^{(e\times e)^*}_{\approx}  
 \ar[dr]^{\theta}_{\approx} 
            & [G_\K, \F(G/U, (G/K)_\K] \ar[u]_{e^*}^{} \\
   & [G_\K, \F((G/U)_\K, (G/K)_\K)] \ar[u]_{(e^\sharp)_*}^{\approx}
}
\eqnlabel{add-3}
$$ 
in which $\theta$ is the adjoint map and $e$ stands for the localization
map. 
It follows from the diagram (3.3) that 
$\theta(\text{action}_\K)= \Theta \circ |\Delta\widetilde{\mu}|$. 
Moreover we have 
$\theta(\text{action})=e_\sharp \circ \lambda = \lambda_K \circ e$. 
Thus the commutativity of the diagram (3.4) implies that 
$e^*([e^\sharp \circ \Theta \circ |\Delta\widetilde{\mu}|]) = 
e^*([\lambda_\K])$ in $[G, \F(G/U, (G/K)_\K)]$. 
Since $G$ is connected, it follows that 
$(e^\sharp)\circ \Theta \circ |\Delta\widetilde{\mu}|\circ e \simeq 
\lambda_\K \circ e : G \to \F(G/U, (G/K)_\K; e\circ p)$.
The fact that $e_\sharp : \F(G/U, (G/K); p) \to\F(G/U, (G/K)_\K; e\circ p)$ 
is the localization yields that 
the induced map $e^* : [G_\K, \F(G/U, (G/K)_\K; e\circ p)] \to 
[G, \F(G/U, (G/K)_\K; e\circ p)]$ 
is bijective. 
This completes the proof. 
\end{proof}

Before proving Theorem \ref{thm:key}, we recall some maps. 
For a simplicial set $K$, 
there exists a natural homotopy equivalence 
$\xi_K : K \to \Delta|K|$, which is defined by 
$\xi_K(\sigma)=t_\sigma : \Delta^n \to \{\sigma\}\times \Delta \to |K|$. 
This gives rise to a  
quasi-isomorphism $\xi_A : \Omega \Delta A \stackrel{\simeq}{\to} 
\Omega \Delta |\Delta A|$. 
Moreover, we can define a bijection 
$
\eta : \text{DGA}(A, \Omega K) \stackrel{\cong}{\to} 
\text{Simp}(K, \Delta A)
$
by $\eta : \phi \mapsto f; f(\sigma)(a)=\phi(a)(\sigma)$, where $a \in
A$ and $\sigma \in K$. 
We observe that  
$\eta^{-1}(id) : A \to \Omega \Delta A$ is a quasi-isomorphism 
if $A$ is a connected Sullivan algebra; see \cite[10.1. Theorem]{B-G}.

\medskip
\noindent
{\it Proof of Theorem \ref{thm:key}.}
Let $\pi : \widetilde{E} \to \widetilde{E}/M_{\widetilde{u}}$ be the projection. 
With the same notation as above, 
we then have a commutative diagram 
$$
\newdir{ >}{{}*!/-10pt/@{>}}
\xymatrix@C12pt@R20pt{
 & |\Delta(\wedge(W\otimes B_*)/F)| 
\ar[rr]^{\Theta}_{\simeq} &  & \F((G/U)_\K,
 (G/K)_\K) \\ 
|\Delta(\wedge V_G)| \ar[r]_{|\Delta(\widetilde{\widetilde{\mu}})|}
  \ar[ur]^{|\Delta(\widetilde{\mu})|} & \hspace{0.1cm}
|\Delta(\widetilde{E}/M_{\widetilde{u}})| \ar@{ >->}_{|\Delta \pi|}[u] 
 \ar[r]_{|\Delta \pi|}^{\simeq} & |(\Delta \widetilde{E})_{\widetilde{u}}|   
\ar[r]_(0.25){\Theta}^(0.25){\simeq} &
\F((G/U)_\K,  (G/K)_\K; \Theta([(1, \widetilde{u})])),  \ar@{^{(}->}[u]
}
\eqnlabel{add-3}
$$ 
where $[(1, \widetilde{u})] \in |\Delta \widetilde{E}|$ is the element whose 
representative is $(1, \widetilde{u}) \in \Delta^0 \times (\Delta
\widetilde{E})_0$. 
Lemma \ref{lem:homotopy} yields that 
$$e^\sharp \circ \Theta \circ |\Delta \pi|\circ 
|\Delta\widetilde{\widetilde{\mu}}|
\simeq e^\sharp \circ \Theta \circ |\Delta \widetilde{\mu}| \simeq
\lambda_\K. 
\eqnlabel{add-4}
$$
Thus we see that 
$e^{\sharp}$ maps $\F((G/U)_\K,  (G/K)_\K; \Theta([(1, \widetilde{u})]))$ to 
$\F((G/U),  (G/K)_\K; e^{\sharp}\circ \Theta([(1, \widetilde{u})]))$, which is
the connected component containing $\text{Im}(\lambda_\K)$. 
This implies that 
$\F((G/U),  (G/K)_\K; e^{\sharp}\circ \Theta([(1, \widetilde{u})]))
=\F((G/U),  (G/K)_\K; e\circ p)$. 
Therefore, 
by the naturality of maps $\eta$ and $\xi_A$, 
we have a diagram
$$
\xymatrix@=18pt{
A_{PL}(G_\K) \ar@{=}[dd] 
      & A_{PL}(\F(G/U, (G/K)_\K; e\circ p)) \ar[l]_(0.6){A_{PL}(\lambda_\K)} 
                     \ar[d]^{((e^\sharp))^*} \\
   &   A_{PL}(\F((G/U)_\K, (G/K)_\K; \Theta([(1, \widetilde{u})]) )) 
\ar[d]^{\Theta^*} \\    
A_{PL}(|\Delta\wedge V_G|) &
A_{PL}(|\Delta(\widetilde{E}/M_{\widetilde{u}})|) 
=\Omega \Delta(\widetilde{E}/M_{\widetilde{u}})
  \ar[l]_(0.6){|\Delta \widetilde{\widetilde{\mu}}|^*} \\
\wedge V_G \ar[u]_{\simeq}^{t':=(\xi_{\wedge V_G}) \eta^{-1}(id)}& 
\widetilde{E}/M_{\widetilde{u}} 
\ar[u]^{\simeq}_{\xi_{\widetilde{E}/M_{\widetilde{u}}} \eta^{-1}(id)=:t} 
               \ar[l]_(0.6){\widetilde{\widetilde{\mu}}}  
}
\eqnlabel{add-4}
$$ 
in which the upper square is homotopy commutative and the lower square 
is strictly commutative. Lifting Lemma allows us to obtain a DGA map 
$\varphi : \widetilde{E}/M_{\widetilde{u}} 
\to A_{PL}(\F(G/U ,(G/K)_\K))$ such that  
$\Theta^*\circ ((e^\sharp)_*)^* \circ \varphi \simeq t$  
and hence $A_{PL}(\lambda_\K)\circ \varphi \simeq t' \circ
\widetilde{\mu}$. 

Given a space $X$, let $u : A \to A_{PL}(X)$ be a DGA map from a Sullivan
algebra $A$.  
Let $[f]$ be an element of $\pi_n(X)$ and $\iota : (\wedge Z, d) 
\stackrel{\simeq}{\to} A_{PL}(S^n)$ the minimal
model.  By taking a Sullivan
representative $\widetilde{f} : A \to \wedge Z$ with respect to $u$,
namely a DGA map satisfying the condition that 
$\iota \circ \widetilde{f}\simeq A_{PL}(f) \circ u$,    
we define a map $\nu_u : \pi_n(X) \to  \text{Hom}(H^nQ(A), \K)$
by $\nu_u ([f]) = H^nQ(\widetilde{f}) : H^nQ(A) \to H^nQ(\wedge Z)=
\K$. 
By virtue of \cite[6.4 Proposition]{B-G}, in particular, 
we have a commutative diagram 
$$
\xymatrix@C30pt@R15pt{
\pi_n(G_\K) \ar[r]^(0.4){\lambda_\K} \ar[d]_{\nu_{t'}}^{\cong} 
       & \pi_n(\F(G/U, (G/K)_\K); e \circ p ) \ar[d]^{\nu_\varphi}_{\cong} \\
\text{Hom}((V_G)^n, \K)  \ar[r]_(0.45){HQ(\widetilde{\widetilde{\mu}})^*} &
 \text{Hom}(H^nQ(\widetilde{E}/M_{\widetilde{u}}), \K).  
}
$$ 
in which $\nu_{t'}$and $\nu_\varphi$ are an isomorphism; 
see \cite[8.13 Proposition]{B-G}. 
There exists an element $[f_i]\otimes q$ in $\pi_*(G)\otimes \K$ which
corresponds to the dual element $x_i^*$ via the isomorphism 
$\pi_*(G)\otimes \K \cong \pi_*(G_\K) \stackrel{\nu_{t'}}{\to} 
\text{Hom}((V_G)^n, \K)$ for any $i = 1, ..., s$. 
The required map $\rho : \times_{j=1}^sS^{\deg x_i} \to G$ 
is defined by the composite of 
the map $\times_{j=1}^s f_i$ and the product $\times_{j=1}^sG \to G$. 
\hfill\qed 
\medskip

The existence of the commutative diagram (3.7) up to homotopy yields the
following result. 

\begin{thm}
\label{thm:model_adjoint}
The DGA map $\widetilde{\widetilde{\mu}} : \widetilde{E}/M_{\widetilde{u}} 
\to \wedge V_G$ 
is a model for the map $\lambda : G \to \F(G/U, G/K; p)$, namely a
 Sullivan representative in the sense of 
\cite[Definition, page 154]{F-H-T}. 
\end{thm}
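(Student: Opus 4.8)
The plan is to read the statement off from the homotopy commutative diagram (3.7) assembled in the course of proving Theorem~\ref{thm:key}; the only genuinely new content is to recognize that the quasi-isomorphisms appearing there are precisely the ones demanded by the definition of a Sullivan representative in \cite[Definition, page~154]{F-H-T}. Recall that, given Sullivan models $m_X:(\wedge V_X,d)\xrightarrow{\simeq}A_{PL}(X)$ and $m_Y:(\wedge V_Y,d)\xrightarrow{\simeq}A_{PL}(Y)$, a DGA map $\varphi_0:(\wedge V_Y,d)\to(\wedge V_X,d)$ is a Sullivan representative for $f:X\to Y$ exactly when $A_{PL}(f)\circ m_Y\simeq m_X\circ\varphi_0$. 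So I must produce such models for $X=G$ and $Y=\F(G/U,G/K;p)$ and verify this one homotopy for $\varphi_0=\widetilde{\widetilde{\mu}}$.

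First I would fix the two models. On the source side, $(\wedge V_G,d)$ is by construction the minimal Sullivan model of $G$; composing the quasi-isomorphism $t'=\xi_{\wedge V_G}\,\eta^{-1}(\mathrm{id}):\wedge V_G\xrightarrow{\simeq}\Omega\Delta(\wedge V_G)=A_{PL}(|\Delta\wedge V_G|)$ from diagram (3.7) with the quasi-isomorphism $A_{PL}(e):A_{PL}(G_\K)\xrightarrow{\simeq}A_{PL}(G)$ induced by the localization $e:G\to G_\K$ gives a Sullivan model $m_G:=A_{PL}(e)\circ t'$ of $G$. On the target side, the DGA map $\varphi:\widetilde{E}/M_{\widetilde{u}}\to A_{PL}(\F(G/U,(G/K)_\K;e\circ p))$ produced by the Lifting Lemma in the proof of Theorem~\ref{thm:key} is a quasi-isomorphism: $\widetilde{E}/M_{\widetilde{u}}$ is the HBS-model of that function space component by Proposition~\ref{prop:ev-model} together with Remark~\ref{rem:variants}(i), $t=\xi_{\widetilde{E}/M_{\widetilde{u}}}\,\eta^{-1}(\mathrm{id})$ is a quasi-isomorphism by \cite[10.1 Theorem]{B-G}, and $\varphi$ was constructed so that $\Theta^*\circ((e^\sharp)_*)^*\circ\varphi\simeq t$ with $\Theta^*$ and $((e^\sharp)_*)^*$ quasi-isomorphisms. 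Since $e_\sharp:\F(G/U,G/K;p)\to\F(G/U,(G/K)_\K;e\circ p)$ is a localization by \cite[Theorem~3.11]{H-M-R}, the map $m_{\F}:=A_{PL}(e_\sharp)\circ\varphi:\widetilde{E}/M_{\widetilde{u}}\xrightarrow{\simeq}A_{PL}(\F(G/U,G/K;p))$ is a Sullivan model for $\F(G/U,G/K;p)$.

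Then I would assemble the required homotopy. The top (homotopy commutative) square of diagram (3.7) gives $A_{PL}(\lambda_\K)\circ\varphi\simeq t'\circ\widetilde{\widetilde{\mu}}$. Applying the contravariant functor $A_{PL}$ to the strictly commutative localization square $\lambda_\K\circ e=e_\sharp\circ\lambda$ yields $A_{PL}(e)\circ A_{PL}(\lambda_\K)=A_{PL}(\lambda)\circ A_{PL}(e_\sharp)$. Pasting these, $A_{PL}(\lambda)\circ m_{\F}=A_{PL}(\lambda)\circ A_{PL}(e_\sharp)\circ\varphi=A_{PL}(e)\circ A_{PL}(\lambda_\K)\circ\varphi\simeq A_{PL}(e)\circ t'\circ\widetilde{\widetilde{\mu}}=m_G\circ\widetilde{\widetilde{\mu}}$, which is exactly the defining condition for $\widetilde{\widetilde{\mu}}$ to be a Sullivan representative for $\lambda$ with respect to $m_G$ and $m_{\F}$.

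I expect the main obstacle to be bookkeeping rather than conceptual: one must be careful that the map $\varphi$ supplied by the Lifting Lemma really does function as the structural quasi-isomorphism of a Sullivan model compatible with the HBS-model $\xi$ of Proposition~\ref{prop:ev-model}, i.e.\ that the two presentations of $A_{PL}\big(\F((G/U)_\K,(G/K)_\K)\big)$ entering the argument — one through $\Theta$ and one through $\xi$ — agree up to homotopy on the relevant component. This is already implicit in the construction of $\varphi$ in the proof of Theorem~\ref{thm:key}; I would simply make it explicit. Everything else (naturality of $\eta$ and $\xi_A$, that localization maps induce $A_{PL}$-quasi-isomorphisms, and the Lifting Lemma for Sullivan algebras) is standard and requires no new computation.
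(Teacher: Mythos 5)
Your proposal is correct and follows exactly the route the paper intends: the paper's entire proof of Theorem~\ref{thm:model_adjoint} is the one-line remark that the homotopy commutativity of diagram~(3.7) yields the result, and you have simply made this explicit by exhibiting the Sullivan models $m_G = A_{PL}(e)\circ t'$ and $m_{\F}=A_{PL}(e_\sharp)\circ\varphi$ and pasting the relevant squares. The only small slip is that you describe the localization square $\lambda_\K\circ e = e_\sharp\circ\lambda$ as strictly commutative, whereas in the paper it is only homotopy commutative; replacing the corresponding equality in your final chain by a homotopy changes nothing, since the goal is only the homotopy relation $A_{PL}(\lambda)\circ m_\F \simeq m_G\circ\widetilde{\widetilde{\mu}}$.
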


\section{A model for the left translation}  
In order to prove Theorems \ref{thm:main2} and \ref{thm:main3}, a more
explicit model for the map $\lambda_{G,M} : G \to \aut{M}$ is required. 
To this end, we refine 
the model of the left translation described in the proof of 
Theorem \ref{thm:key}. 

We first observe that the cohomology $H^*(BU ; \K)$ is isomorphic to 
a polynomial algebra with finite generators, 
say $H^*(BU ; \K)\cong \K[h_1, ..., h_l]$. 
We consider a commutative diagram of fibrations 
$$
\xymatrix@C30pt@R15pt{
& G \ar@{=}[r] \ar[d]_i & G \ar[d] \\
G/U & G \times_U E_U \ar[l]^{\simeq}_h \ar[r] \ar[d]_{\overline{\pi}} 
& E_G \ar[d]^{\pi} \\
 & BU \ar[r]_{B\iota} & BG  
}
$$
in which $h : G \times_U E_U \to G/U$ is a homotopy equivalence defined
by $h([g, e])= [g]$.  
This diagram yields a Sullivan 
model $(\wedge W, d)$ for $G/U$ which has the form 
$
(\wedge W, d) = (\wedge (h_1, ..., h_l, x_1, ..., x_k), d)  
$
with $dx_j = (B\iota)^*c_j$; 
see \cite[Proposition 15.16]{F-H-T} for the details. 
Moreover we have a model  $(\wedge V_G , d)$ for $G$ of the form 
$(\wedge (x_1, ...., x_k), 0)$. Since $h\circ i$ is nothing but 
the projection $\pi : G \to  G/U$, it follows that 
the natural projection 
$\rho : (\wedge (h_1, ..., h_l, x_1, ..., x_k), d)  
\to (\wedge (x_1, ...., x_k), 0)$ 
is a Sullivan model for the map $\pi$.   

Let $\beta : G\times (G\times_UE_U) \to G \times_U E_G$ be the action of
$G$ on $G\times_UE_U$. Then the left translation $tr : G\times G/U \to G/U$ 
coincides with $\beta$ up to the homotopy equivalence 
$h : (G\times_UE_U)\to G/U$ mentioned above. 
Thus in order to obtain a model for  the linear action, 
it suffices to construct a model for $\beta$.    
Recall the fibration
$G \to G\times_U E_U \stackrel{\overline{\pi}}{\to} BU$ 
and the universal fibration 
$G \to E_G \stackrel{\pi}{\to} BG$. 
We here consider a commutative diagram 
$$
\xymatrix@C14pt@R10pt{
 & G\times (G\times_UE_U) \ar[dl]_{\beta} \ar@{->}'[d]_{\overline{\pi}'}[dd] 
 \ar[rr]^{1\times f} &   & G \times E_G \ar[dd]_{\pi'} \ar[dl]_{\alpha} \\
 G\times_UE_U \ar[rr]^(0.7){f} \ar[dd]_{\overline{\pi}} 
   & &  E_G \ar[dd]_(0.4)\pi \\
  &  BU \ar@{->}'[r]_{B\iota}[rr]  \ar[dl]_{=} &   & BG   \ar[dl]^{=} \\
 BU \ar[rr]_{B\iota} & & BG  \\
}
\eqnlabel{add-1}
$$
in which 
 $\pi'$ and $\overline{\pi}'$ are fibrations with 
the same fibre $G\times G$ 
and the restrictions $\alpha_{|\text{fibre}} : G\times G \to G$ and 
 $\beta_{|\text{fibre}} : G\times (G\times_UE_U) \to (G\times_UE_U)$ are 
the multiplication on $G$ and the  action of $G$, respectively. 
 Let 
$i : (\wedge V_{BG}, 0) \rightarrowtail \wedge (\widetilde{V_{BU}}, d)$ 
be a Sullivan model for $B\iota$. In particular, we can choose such a
model so that 
$$\wedge \widetilde{V_{BU}}= \wedge (c_1 ,...., c_m) 
\otimes \wedge (h_1, ...,h_l) \otimes \wedge (\tau_1, ..., \tau_m)
$$
and $d(\tau_i) = B\iota(c_i) - c_i$. 
By the construction of a model for pullback fibration mentioned 
in \cite[page 205]{F-H-T}, we obtain a diagram 
$$
\xymatrix@C14pt@R5pt{
 &  \wedge Z &   & \wedge W' \ar[ll]_(0.3){v'} \\
\wedge V \ar[ur]^{\widetilde{\beta}} &  
       & \wedge V' \ar[ll]_(0.3){v} \ar[ur]^{\widetilde{\alpha}} \\
  &  \wedge \widetilde{V_{BU}} \ar@{->}'[u][uu]_{u'} &   
                 & \hspace{0.2cm} 
 \wedge V_{BG}  \ar@{>->}'[l][ll]_(0.3)i \ar[uu] \\
\wedge \widetilde{V_{BU}} \ar[ur]^{=} \ar[uu]^{u}& 
                 & \hspace{0.2cm} \wedge V_{BG} \ar@{>->}[ll]^i
		  \ar[ur]_{=} \ar[uu] \\
}
\eqnlabel{add-2}
$$
in which vertical arrows are Sullivan models for the fibrations in the 
diagram (4.1). 
Observe that squares are commutative except for the top square.  
Let $\Psi : \wedge Z \to A_{PL}(G \times (G\times_UE_U))$ be the 
Sullivan model with which Sullivan representatives in (4.2) are
constructed. The argument in \cite[page 205]{F-H-T} allows us to choose 
homotopies, which makes maps $v$, $\widetilde{\beta}$, $v'$ and 
$\widetilde{\alpha}$ Sullivan
representatives for the corresponding maps, 
so that all of them are relative with respect to $\wedge V_{BG}$. 
This implies that 
$\Psi \circ \beta \circ v \simeq \Psi \circ v' \circ \widetilde{\alpha}$ 
rel $\wedge V_{BG}$. By virtue of Lifting lemma \cite[Proposition 14.6]{F-H-T},
we have a homotopy $H : \widetilde{\beta} 
\circ v \simeq v' \circ \widetilde{\alpha}$ rel $\wedge V_{BG}$. 
Thus we have a homotopy commutative diagram 
$$
\xymatrix@C20pt@R20pt{
\wedge V' \otimes_{\wedge V_{BG}}
\wedge \widetilde{V_{BU}} \ar[r]^(0.7){u\cdot v} 
  \ar[d]_{ \widetilde{\alpha} \otimes 1}  & \wedge V  
          \ar[d]^{\widetilde{\beta}} \\
\wedge W'  \otimes_{\wedge V_{BG}} \wedge \widetilde{V_{BU}}
\ar[r]_(0.7){u'\cdot v'} & \wedge Z
}
$$
in which horizontal arrows are quasi-isomorphisms; see 
\cite[(15.9) page 204]{F-H-T}. In fact the homotopy 
$K : \wedge \widetilde{V_{BU}}\otimes_{\wedge V_{BG}} \wedge V' 
\to \wedge W \otimes \wedge (t, dt)$ is given by 
$K = (\widetilde{\beta} \circ u)\cdot H$. 
Observe that $\widetilde{\beta}\circ u = u'$. 
Thus we have a model $\widetilde{\alpha}\otimes 1$ for $\widetilde{\beta}$ and
hence for the left translation. 

The model $\widetilde{\alpha}\otimes 1$ can be replaced 
by more tractable one. In fact, recalling the model 
$(\widetilde{V_{BU}}, d)$ for $BU$ mentioned above, 
it is readily seen that the map 
$s : \wedge \widetilde{V_{BU}} \to \wedge V_{BU}=\wedge (h_1, ...,
h_l)$, 
which is defined by $s(c_i) = (B\iota)^*(c_i)$, $s(h_i)=h_i$ 
and $s(\tau_j)=0$, is a quasi-isomorphism and 
is compatible with $\wedge V_{BG}$-action. 
Observe that the Sullivan representative for $B\iota : BU \to BG$ 
is also denoted by $(B\iota)^*$.    
Thus we have a commutative diagram 
$$
\xymatrix@C20pt@R20pt{
\wedge V'  \otimes_{\wedge V_{BG}} \wedge V_{BU}
\ar[d]_{{\zeta}:= \widetilde{\alpha} \otimes 1}  & 
 \wedge V' \otimes_{\wedge V_{BG}}\wedge \widetilde{V_{BU}}  
 \ar[d]^{\widetilde{\alpha} \otimes 1} \ar[l]_{1\otimes s} \\
\wedge W'  \otimes_{\wedge V_{BG}}\wedge V_{BU} & 
\wedge W'  \otimes_{\wedge V_{BG}}\wedge \widetilde{V_{BU}} 
\ar[l]^{1\otimes s} 
}
$$
in which the DGA maps $1\otimes s$ are quasi-isomorphisms.  
As usual, the Lifting lemma enables us to deduce the following lemma.  

\begin{lem}
\label{lem:model-action}
The DGA map $\zeta:= \widetilde{\alpha} \otimes 1$ 
is a Sullivan representative for 
the left translation $tr : G \times G/U \to G/U$. 
\end{lem}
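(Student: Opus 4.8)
The statement to be proved is Lemma~\ref{lem:model-action}: that $\zeta := \widetilde{\alpha}\otimes 1$ is a Sullivan representative for the left translation $tr : G\times G/U \to G/U$. The essential work has already been assembled in the preceding discussion; what remains is to package it through the homotopy equivalence $h : G\times_U E_U \xrightarrow{\simeq} G/U$ and through the quasi-isomorphisms $1\otimes s$ appearing in the last commutative diagram. So the plan is to trace the model $\widetilde{\beta}$ (which was already shown to be a model for $\beta$, and hence for $tr$ via $h$) down through the comparison with $\zeta$, invoking the Lifting Lemma exactly once at the end.

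First I would recall the three facts already established in Section~4: (i) the left translation $tr : G\times G/U \to G/U$ agrees, up to the homotopy equivalence $h$, with the action $\beta : G\times(G\times_U E_U)\to G\times_U E_U$; (ii) the map $\widetilde{\alpha}\otimes 1 : \wedge W' \otimes_{\wedge V_{BG}}\wedge\widetilde{V_{BU}} \to \wedge Z$ is a Sullivan representative for $\widetilde{\beta}$, by the homotopy commutative square obtained via the Lifting Lemma \cite[Proposition 14.6]{F-H-T} with the explicit homotopy $K = (\widetilde{\beta}\circ u)\cdot H$; and (iii) the map $s : \wedge\widetilde{V_{BU}}\to \wedge V_{BU}$, $s(c_i)=(B\iota)^*(c_i)$, $s(h_i)=h_i$, $s(\tau_j)=0$, is a quasi-isomorphism compatible with the $\wedge V_{BG}$-action, so that $1\otimes s$ in the final square is a quasi-isomorphism. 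Combining (ii) and (iii) with the commutativity of that final square, $\zeta = \widetilde{\alpha}\otimes 1$ (on $\wedge W'\otimes_{\wedge V_{BG}}\wedge V_{BU}$) and $\widetilde{\beta}$ are connected by a zig-zag of quasi-isomorphisms over a common model, so $\zeta$ is again a Sullivan representative for $\widetilde{\beta}$, hence for $\beta$.

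Next I would pass from $\beta$ to $tr$. Because $h : G\times_U E_U \to G/U$ is a homotopy equivalence fitting into the commutative diagram of fibrations in Section~4, the induced map $h^* : (\wedge W, d)\to A_{PL}(G\times_U E_U)$ realizes $(\wedge W,d)$ — the chosen Sullivan model for $G/U$ with $dx_j=(B\iota)^*c_j$ — as a model for $G\times_U E_U$ as well. Identifying $\wedge W$ with the relevant side of the tensor product $\wedge W'\otimes_{\wedge V_{BG}}\wedge V_{BU}$ (this is exactly the form $(\wedge(h_1,\dots,h_l,x_1,\dots,x_k),d)$ recorded earlier), one checks that the square relating $tr$, $\beta$, $1\times h$ and $h$ commutes up to homotopy, and then the Lifting Lemma promotes $\zeta$ — already a representative for $\widetilde{\beta}$ — to a Sullivan representative for $tr$. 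This last invocation of the Lifting Lemma is the only genuine step; everything else is bookkeeping about which quasi-isomorphisms are compatible with the $\wedge V_{BG}$-structure.

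**Expected main obstacle.** The only delicate point is keeping all the homotopies \emph{relative to $\wedge V_{BG}$} throughout, so that the Lifting Lemma applies at each stage with the correct relative structure; this is precisely the issue the author already flagged ("The argument in \cite[page 205]{F-H-T} allows us to choose homotopies $\ldots$ so that all of them are relative with respect to $\wedge V_{BG}$"). Concretely, one must verify that the homotopy equivalence $h$, and hence the identification of models for $G/U$ and $G\times_U E_U$, respects the fibration over $BU$ and thus the $\wedge\widetilde{V_{BU}}$- (equivalently, after applying $s$, the $\wedge V_{BU}$-) module structures; granting the compatibilities already asserted in Section~4, this is routine. I therefore expect the proof to read: "As usual, the Lifting Lemma \cite[Proposition 14.6]{F-H-T}, applied to the homotopy commutative square relating $tr$, $\beta$, $h$ and the quasi-isomorphisms $1\otimes s$, shows that $\zeta$ is a Sullivan representative for $tr$."
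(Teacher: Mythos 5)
Your proposal is correct and follows essentially the same route as the paper: the paper's explicit proof consists precisely of the preceding construction (the diagram of fibrations, the homotopy $H : \widetilde{\beta}\circ v \simeq v'\circ\widetilde{\alpha}$ rel $\wedge V_{BG}$, the resulting square exhibiting $\widetilde{\alpha}\otimes 1$ as a model for $\widetilde{\beta}$, and then the replacement via the quasi-isomorphisms $1\otimes s$) followed by the one-line statement ``As usual, the Lifting lemma enables us to deduce the following lemma,'' which is exactly the final Lifting-Lemma invocation you supply. Minor notational slips aside (your (ii) labels $\widetilde{\alpha}\otimes 1$ with the wrong source and target — it is a map $\wedge V'\otimes_{\wedge V_{BG}}\wedge\widetilde{V_{BU}}\to\wedge W'\otimes_{\wedge V_{BG}}\wedge\widetilde{V_{BU}}$, and it is a Sullivan representative for $\beta$ rather than ``for $\widetilde{\beta}$''), the structure and the flagged obstacle (relative homotopies over $\wedge V_{BG}$) match the paper's.
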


In order to construct a model for $tr$ more explicitly, we proceed 
to construct that for $\alpha$. 

\begin{lem} 
\label{lem:model-action2}
There exists a Sullivan representative $\psi$ 
for $\alpha$ such that 
a diagram 
$$
\xymatrix@C15pt@R8pt{
 & \wedge (x_1, ..., x_l)\otimes \wedge V_{BG} \ar[dd]^{\psi} 
       & \hspace{-2.8cm} = \wedge V' \\
\wedge V_{BG} \hspace{0.2cm} \ar@{>->}[ur]^{i_1} \ar@{>->}[dr]^{i_2}& & \\
 & \wedge (x_1, ..., x_l)\otimes \wedge (x_1, ..., x_l)\otimes \wedge V_{BG}
   & \hspace{-0.6cm} = \wedge W' 
}
$$
is commutative and 
$
\psi(x_i) = x_i\otimes 1 \otimes 1 + 1\otimes x_i \otimes 1 
+ \sum_{n}X_n\otimes X_n' C_n 
$
for some monomials 
$X_n \in \wedge (x_1, ..., x_l)$, $X_n' \in \wedge^+(x_1, ..., x_l)$ 
and monomials $C_n \in \wedge^+V_{BG}$. Here  
$i_1$ and $i_2$ denote Sullivan models for $p$ and $p'$, respectively. 
\end{lem}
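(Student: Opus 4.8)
The goal is to produce a Sullivan representative $\psi$ for the multiplication-type map $\alpha : G \times E_G \to E_G$ appearing in the diagram (4.1), with the stated triangular compatibility with the two bundle-section maps $i_1, i_2$ (models for the projections $p, p'$), and with the explicit "leading term plus correction" formula for $\psi(x_i)$. I would work entirely in the category of relative Sullivan algebras over $\wedge V_{BG}$, exactly as in the construction of a model for a pullback fibration in \cite[\S15]{F-H-T}.

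First I would fix the Sullivan model $(\wedge V', d) = (\wedge(x_1,\dots,x_l)\otimes \wedge V_{BG}, d)$ for the total space $E_G$ of the fibration $G \to E_G \xrightarrow{\pi} BG$: here $\wedge V_{BG}=\wedge(c_1,\dots,c_m)$ with $c_j$ of even degree, the $x_i$ are the transgressive generators of $H^*(G)$ with $dx_i = c_{\sigma(i)}$ (after reindexing, the Koszul-Sullivan model of the universal fibration), and $\wedge W' = \wedge(x_1,\dots,x_l)\otimes\wedge(x_1,\dots,x_l)\otimes\wedge V_{BG}$ is a model for the pullback $G\times E_G$ over $BG$ along $\pi$, with the two copies of $\wedge(x_1,\dots,x_l)$ coming from the two $G$-factors (one from $G$, one from the fibre $G$ of $E_G$). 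The maps $i_1, i_2$ are the obvious inclusions of $\wedge V_{BG}$ into the respective tensor factors; these are the Sullivan models for $p: G\times_U E_U\to BU$-type projections (here $p,p'$ are the two natural maps $G\times E_G\to E_G$, i.e. projection and action), as stated.

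Next I would \emph{construct $\psi$ by induction on the (reindexed) degree filtration of the $x_i$}. On generators of $\wedge V_{BG}$, $\psi$ is forced to be the identity (relativity over $\wedge V_{BG}$). For $x_i$ I set the ansatz $\psi(x_i) = x_i\otimes 1\otimes 1 + 1\otimes x_i\otimes 1 + \Phi_i$ where $\Phi_i$ is to be chosen in the ideal generated by products of positive-degree elements of the two $\wedge(x_\bullet)$ factors and $\wedge^+ V_{BG}$; the linear (non-corrected) part is dictated by the fact that on cohomology $\alpha$ restricts to the multiplication $G\times G\to G$, whose standard model on primitives is $x_i\mapsto x_i\otimes 1 + 1\otimes x_i$. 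The compatibility constraint $\psi\circ i_1 = i_2$-type triangle commutes automatically from this choice of linear part. The obstruction to extending $\psi$ over the next generator is the class of $\psi(dx_i) - d(\text{candidate})$; since $dx_i\in\wedge V_{BG}$ and $\psi$ is the identity there, $\psi(dx_i)=dx_i$ is automatically a cocycle whose required primitive exists because $\wedge W'$ is a model for the pullback and the relevant relative cohomology in that degree is concentrated as needed — this is where one invokes that $H^*(E_G)$ has trivial reduced cohomology (it is contractible) so the only constraint propagates through $\wedge W'$ consistently. The correction term $\Phi_i = \sum_n X_n\otimes X'_n C_n$ is precisely such a chosen primitive, and its shape (a monomial in the first factor, a \emph{positive}-degree monomial in the second factor, times a positive-degree polynomial in the $c_j$) follows from degree reasons and from the fact that it must vanish after composing with either $i_1$ or $i_2$ — setting either copy of $x_\bullet$ to zero must kill $\Phi_i$, forcing both $X_n$ and $X'_n$ (or at least $X'_n$, by the asymmetry of which factor is the "action" direction) to be positive-degree. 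Finally, that $\psi$ so constructed is genuinely a Sullivan \emph{representative} for $\alpha$ — i.e. $\Psi\circ\alpha \simeq \psi^{\sharp}$ in the appropriate sense — follows from the Lifting Lemma \cite[Proposition 14.6]{F-H-T} applied relative to $\wedge V_{BG}$, exactly as in the proof of Lemma \ref{lem:model-action}.

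\textbf{Main obstacle.} The routine part is the degree bookkeeping that forces the correction terms into the stated monomial form; the genuine subtlety is arranging that \emph{a single} $\psi$ is simultaneously strictly compatible with \emph{both} $i_1$ and $i_2$ (not merely up to homotopy) while remaining a valid Sullivan representative for $\alpha$. This is what dictates building $\psi$ inductively within the relative category over $\wedge V_{BG}$ and choosing the homotopies in (4.2) to be relative, so that the strict triangle survives; if one instead only knew homotopy-commutativity one would lose the clean formula needed later for the explicit computations of $\widetilde{\widetilde\mu}$. I expect the write-up to spend most of its effort justifying that the obstruction classes encountered in the induction actually vanish in $\wedge W'$, which reduces to the contractibility of $E_G$ together with the multiplicativity of $\alpha$ on the fibre.
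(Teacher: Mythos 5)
The proposal correctly identifies the strategy of working in the relative Sullivan category over $\wedge V_{BG}$ and invoking the pullback-fibration construction of \cite[15.9]{F-H-T}, but its mechanism for establishing the explicit form of $\psi(x_i)$ is wrong, and it omits the two steps that actually carry the argument in the paper. The commutativity of the triangle only says that $\psi\circ i_1 = i_2$, i.e.\ that $\psi$ is a morphism of $\wedge V_{BG}$-algebras; it places no constraint on $\psi(x_i)$ beyond $\wedge V_{BG}$-linearity. Your claim that ``setting either copy of $x_\bullet$ to zero must kill $\Phi_i$'' confuses the inclusions $i_1, i_2$ with projections onto tensor factors: neither composite $\psi\circ i_1$ nor $i_2$ involves killing a copy of $\wedge(x_1,\dots,x_l)$, so this does not force the correction terms into the monomial form $X_n\otimes X_n'C_n$ with $X_n'\in\wedge^+(x)$ and $C_n\in\wedge^+V_{BG}$.

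A general $\wedge V_{BG}$-linear $\psi$ could also contain terms of type $\tilde X_n\otimes\tilde X_n'$ lying in $\wedge^+(x)\otimes\wedge^+(x)\otimes 1$ (no factor in $\wedge^+V_{BG}$) and terms of type $X_n''\otimes C_n''$ lying in $\wedge(x)\otimes 1\otimes\wedge^+V_{BG}$ (nothing in the middle $x$-factor). The paper rules out the first family by the equation $dx_i=\psi(dx_i)$ combined with the injectivity of $d : \wedge^+(x_1,\dots,x_l)\to\wedge(x_1,\dots,x_l)\otimes\wedge V_{BG}$, after first using that $\psi$ restricted to $\wedge(x)\otimes\wedge(x)\otimes 1$ must be a model of the multiplication $G\times G\to G$ (which pins down the linear part and forces $\tilde X_n,\tilde X_n'\in\wedge^+$). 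It then removes the second family by an explicit homotopy rel $\wedge V_{BG}$, writing $C_n''=c_{i_n}^{k_n}\tilde C_n$ and constructing $H$ by hand; this homotopy step is essential and is absent from your outline. Finally, the ``main obstacle'' you identify --- simultaneous strict compatibility with both $i_1$ and $i_2$ --- is not really an issue: once $\psi$ is built as a morphism over $\wedge V_{BG}$ (which \cite[15.9]{F-H-T} gives directly), the triangle commutes automatically. The genuine content of the lemma is the normalization of $\psi(x_i)$, which your argument does not reach.
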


\begin{proof}
We first observe that $d(x_i \otimes 1) = 0$ and 
$d(1\otimes x_i) =c_i \in \wedge (c_1, ...,c_l)=\wedge V_{BG}$ in 
$\wedge W'$.  
It follows from \cite[15.9]{F-H-T} that there exists a DGA map $\psi$ 
which makes the diagram commutative. We write 
$$
\psi(x_i) = x_i\otimes 1 \otimes 1 
+ 1\otimes x_i \otimes 1 + \sum_{n}X_n\otimes X_n' C_n 
+ \sum_n \tilde{X}_n\otimes \tilde{X}_n' + \sum_n X_n''\otimes C_n''  
$$ 
with monomial bases,  
where 
$X_n,  X_n'' \in \wedge (x_1, ..., x_l)\otimes 1 \otimes 1$, 
$X_n' \in 1\otimes \wedge^+(x_1, ..., x_l)\otimes 1$,  
$\tilde{X}_n\otimes \tilde{X}_n' 
\in \wedge (x_1, ..., x_l)\otimes \wedge (x_1, ..., x_l)\otimes 1$ 
and $C_n, C_n'' \in \wedge^+V_{BG}$. 
The map $\wedge (x_1 ,..., x_l) \to 
\wedge (x_1 ,..., x_l)\otimes \wedge (x_1 ,..., x_l)$ 
induced by $\psi$ is a Sullivan 
representative for the product of $G$.  
This allows us to conclude  that  $\tilde {X}_n$ and $\tilde{X}_n'$ 
are in  $\wedge^+(x_1, ..., x_l)$.  
Since $\psi$ is a DGA map, it follows that 
$$dx_i = \psi(dx_i) 
     = dx_i + \sum_{n}X_n\otimes d(X_n') C_n 
+ \sum_n \tilde{X}_n\otimes d(\tilde{X}_n').    
$$
This implies that $\sum_{n}X_n\otimes d(X_n') C_n = 0$ and  
$\sum_n\tilde{X}_n\otimes d(\tilde{X}_n')=0$. 
Since the map $d : \wedge^+(x_1, ...,x_l) \to \wedge(x_1, ...,x_l)\otimes 
\wedge V_{BG}$ 
is a monomorphism, it follows that $\sum_n\tilde{X}_n\otimes \tilde{X}_n'= 0$. 
We write $C_n'' = c_{i_n}^{k_n}\tilde{C}_n$, where $k_n \geq 1$. 
Define a homotopy 
$$
H : \wedge (x_1, ..., x_l) \otimes \wedge V_{BG} 
\to \wedge (x_1, ..., x_l)\otimes \wedge (x_1, ..., x_l) 
\otimes \wedge V_{BG}\otimes \wedge (t, dt)
$$
by $H(c_i) = c_i\otimes 1$ and 
\begin{eqnarray*}
H(x_i) &=& x_i\otimes 1 \otimes 1 + 1\otimes x_i \otimes 1 
+ \sum_{n}X_n\otimes X_n' C_n \\
& & -  \sum_n X_n''\otimes x_{i_n} \otimes c_{i_n}^{k_n-1}\tilde{C}_n
\otimes dt 
+   \sum_n X_n''\otimes 1 \otimes c_{i_n}^{k_n}\tilde{C}_n\otimes t. 
\end{eqnarray*} 
Put $\widetilde{\psi} = (\e_0\otimes 1) \circ \psi$. 
We see that $\widetilde{\psi} \simeq \psi$ rel $\wedge V_{BG}$. 
This completes the proof. 
\end{proof}

\section{Proof of Theorem \ref{thm:main2}}

We prove Theorem \ref{thm:main2} by means of the model for the left
translation described in the previous section.  

\medskip
\noindent
{\it Proof of Theorem \ref{thm:main2}.} We adapt Theorem \ref{thm:key}. 
We recall the Sullivan model $(\wedge W, d)$ for $G/U$ mentioned in
Section 4. Observe that $(\wedge W, d)$ 
has the form 
$$
(\wedge W, d) = (\wedge (h_1, ..., h_l, x_1, ..., x_k), d)  
$$
with $dx_j = (B\iota)^*c_j$.

Let $l : (H^*(BU), 0) \to (\wedge W, d)$ be the inclusion and 
$$
\xymatrix@C15pt@R15pt{
k : (\wedge W, d) \ar[r] & 
 (\wedge (h_1, ..., h_l)/(dx_1, ..., dx_l),0) \ 
\ar@{>->}[r] &  (H^*(G/U), 0)
}
$$ 
the DGA map defined by 
$k(h_i)=(-1)^{\tau(|h_i|)}h_i$ and $k(x_i)=0$. 
Recall the DGA $\widetilde{E}=\wedge(\wedge W\otimes (\wedge W)_*)/I$
and the DGA map $\widetilde{\mu} : \widetilde{E} \to \wedge V_G$
mentioned in Section 3, where we use the model 
$\zeta : \wedge W \to \wedge V_G\otimes \wedge W$ for the action 
$G \times G/U \to G/U$ constructed in Lemmas \ref{lem:model-action} and 
 \ref{lem:model-action2} in order to define $\widetilde{\mu}$; see (3.1).  
Consider the composite 

\medskip
\noindent
$\theta : (H^*(BU)  \! : \! H^*(G/U))=
\wedge(H^*(BU)\otimes H_*(G/U))/I$  \\

$
\qquad \qquad \maprightu{l\otimes 1}  
\wedge(\wedge W\otimes H_*(G/U))/I  
\maprightu{1\otimes k^{\sharp}} 
\wedge(\wedge W\otimes (\wedge W)_*)/I=\widetilde{E}. 
$

\medskip
\noindent
Let $\widetilde{u} : \widetilde{E} \to \K$ be an augmentation defined by 
$\widetilde{u}=\varepsilon \circ\widetilde{\mu}$, 
where  $\varepsilon : \wedge V_G \to \K$ is the augmentation. Then we
have $\theta(M_u) \subset M_{\widetilde{u}}$.   In fact, since 
$i^*(h_i)=(-1)^{\tau(|h_i|)}k \circ l(h_i)$ and 
$\langle  h_i, k^{\sharp}b_* \rangle = 
\langle \zeta h_i, b_* \rangle$ 
for $h_i \in H^*(BU)$, it follows
that 
\begin{eqnarray*}
\theta(h_i\otimes b_* - u(h_i\otimes b_*)) &=& 
h_i\otimes k^{\sharp}b_* - \langle i^*h_i, b_* \rangle \\
&=& h_i\otimes k^{\sharp}b_* - 
(-1)^{\tau(|h|)}\langle k h_i, b_* \rangle \\
&=& h_i\otimes k^{\sharp}b_* - 
(-1)^{\tau(|h|)}\langle \zeta  h_i,  b_* \rangle \\
&=& h_i\otimes k^{\sharp}b_* - 
\widetilde{u}(h_i\otimes k^{\sharp}b_*). 
\end{eqnarray*}

Consider an element 
$
z:=x_{i_t}\otimes 1_* - (-1)^{\tau(|u_{t*}|)}x_{j_t}
\otimes k^{\sharp}(u_{t*}) \in Q(\widetilde{E}/M_{\widetilde{u}}). 
$
For any $\alpha \in \wedge W$,  
$\langle \alpha, d^{\sharp}k^{\sharp}u_{t*} \rangle= 
\langle kd \alpha, u_{t*} \rangle= 0. $ 
Therefore we see that, in  $Q(\widetilde{E}/M_{\widetilde{u}})$,  
$$\delta_0(z) =dx_{i_t}\otimes 1_* - (-1)^{\tau(|u_{t*}|)}dx_{j_t}
\otimes k^{\sharp}(u_{t*}) = 
\theta((B\iota)^*(c_{i_t})\otimes 1_* -(B\iota)^*(c_{j_t})\otimes
u_{t*})=0.
$$
The last equality follows from the assumption that 
$(B\iota)^*(c_{i_t})\otimes 1_* \equiv(B\iota)^*(c_{j_t})\otimes u_{t*}$
modulo decomposable elements in $(H^*(BU) : H^*(G/U))/M_u$. 
By using the notation in Lemma \ref{lem:model-action2}, we see that 
\begin{eqnarray*}
H^*Q(\widetilde{\widetilde{\mu}})(z) &=& \langle \zeta x_{i_t}, 1_*\rangle - 
\langle \zeta x_{j_t}, k^{\sharp}u_{t*} \rangle \\
&=& \langle x_{i_t}\otimes 1, 1_*\rangle - 
\langle \sum X_n\otimes X_n'C_n,  k^{\sharp}u_{t*} \rangle \\
&=& x_{i_t} - \sum X_n  \langle k(X_n')C_n,  u_{t*} \rangle \ = \ x_{i_t}.
\end{eqnarray*}
Observe that $k(X_n')=0$. By virtue of Theorem \ref{thm:key}, we have
the result.  
\hfill \qed

\medskip
\begin{rem}
\label{rem:anotherproof}
As for the latter half of Theorem \ref{thm:main2}, namely, in the case
 where 
$(B\iota)^*(c_{i_1})$, ..., $(B\iota)^*(c_{i_s})$ are decomposable, we
 have a very simple proof of the assertion. In fact, 
the composite of the evaluation map $ev_0 : \aut{G/U} \to G/U$ and 
the map $\lambda : G \to \aut{G/U}$ is nothing but 
the projection $\pi : G \to G/U$. 
We consider the model $\eta : (\wedge W, d)\to (\wedge V_G, 0)$ for
 $\pi$ mentioned in the proof of Theorem \ref{thm:main2}. 
Then we see that $HQ(\rho)(x_{i_t})=x_{i_t}$ for the map 
$HQ(\rho) : HQ(\wedge W) \to HQ(\wedge V_G)=V_G$.   
Observe that  $x_{i_t} \in HQ(\wedge W)$ since $(B\iota)^*(c_{i_t})$ is
 decomposable. The same argument as the proof of Theorem \ref{thm:key}
 enables us to conclude that there is a map 
$\rho : \times_{t=1}^sS^{\deg c_{i_t}-1} \to G$ such that 
$\pi_* \circ \rho_* : \pi_*( \times_{t=1}^sS^{\deg c_{i_t}-1}_\K) \to 
\pi_*(G_\K)$ is injective. Thus $\lambda_* \circ \rho_*$ is injective in 
 the rational homotopy.   
\end{rem}

\begin{rem}
\label{rem:c-suspension}
In the proof of Theorem \ref{thm:main2}, we construct a model for $G$ of
 the form $(\wedge (x_1, ...., x_k), 0)$. By virtue of  
 \cite[Proposition 15.13]{F-H-T}, we can choose the elements $x_j$ so that  
$\sigma^*(c_j)= x_j$, where 
$\xymatrix@C20pt{\sigma^* : H^*(BG) \ar[r]^{\pi^*}  &
 H^*(E_G, G) & H^*(G) \ar[l]_(0.4){\delta}^(0.4){\cong}
}$  denotes the cohomology suspension. 
\end{rem}


In the rest of this section, 
we describe a suitable model for $\F(G/U, (G/K)_\K; e\circ p)$ 
for proving Theorems \ref{thm:main3} and \ref{thm:main4}. 

Let $G$ be a connected Lie group, $U$ a connected maximal rank 
subgroup and $K$ another connected maximal rank subgroup which contains
$U$. We recall from Section 2 a Sullivan model
for the connected component $\F(G/U, (G/K)_\K; e\circ p)$ containing 
the composite $e\circ p$ of the function space $\F(G/U, (G/K)_\K)$,
where $e : G/K \to (G/K)_\K$ is the localization map.  

Let $\iota_1 : K \to G$ and $\iota_2 : U \to K$ be the inclusions and
put $\iota = \iota_1\circ\iota_2$. 
Let $\varphi_U : (\wedge W', d) \stackrel{\simeq}{\to} \Omega \Delta
(G/U)$ and 
$\varphi_K : (\wedge \widetilde{W}, d) \stackrel{\simeq}{\to} \Omega \Delta
(G/K)$ be the Sullivan models for $G/U$ and $G/K$, respectively, 
mentioned in the proof of Theorem \ref{thm:main2}; that is,  
$(\wedge W', d)=(\wedge (h_1, ..., h_l, x_1, ..., x_k), d)$
with $d(x_i)=(B\iota)^*(c_i)$ and 
$(\wedge \widetilde{W}, d)=(\wedge (e_1, ..., e_s, x_1, ..., x_k), d)$
with $d(x_i)=(B\iota_1)^*(c_i)$.  
By applying Lifting Lemma to the commutative diagram 
$$
\newdir{ >}{{}*!/-10pt/@{>}}
\xymatrix@C25pt@R20pt{ 
\wedge V_{BK} \ar[r]^{(B\iota_2)^*}   \ar@{ >->}[d] 
 & \wedge V_{BU} \ \ar@{>->}[r] & \wedge W' \ar[d]^{\varphi_U}\\
\wedge \widetilde{W} \ar[r]_(0.45){\varphi_K} & \Omega\Delta(G/K) 
\ar[r]_{\Omega\Delta(p)} & \Omega\Delta(G/U), 
}
$$
we have a diagram 
$$ 
\xymatrix@C30pt@R20pt{ 
H^*(G/U) & \wedge W' \ar[r]_(0.45){\simeq} \ar[l]_(0.4){k}^(0.4){\simeq}
   & \Omega \Delta (G/U) \\
H^*(G/K) \ar[u]^{p^*}& \wedge \widetilde{W} 
 \ar[r]^(0.45){\simeq} \ar[u]^{\varphi} 
  \ar[l]^(0.4){l}_(0.4){\simeq} & 
\Omega \Delta (G/K)  \ar[u]_{\Omega\Delta (p)}
}
\eqnlabel{add-3}
$$
in which the right square is homotopy commutative and the left that is
strictly commutative. In particular, $k(x_i)= 0$, $l(x_i)=0$ and 
$\varphi(e_i)= (B\iota_2)^*e_i$.

Let $w : \wedge W \to \wedge \widetilde{W}$ be a minimal model for 
$(\wedge\widetilde{W}, d)$ and 
$k^{\sharp} : (H^*(G/U))^\sharp \to (\wedge W')^\sharp$ the
dual to the map $k$.

As in Remark \ref{rem:variants}(ii), 
we construct the DGA $E'$ 
by using $(\wedge W', d)=(B, d_B)$ and $(\wedge W, d)$. Then we have a
sequence of quasi-isomorphisms 
$$
\xymatrix@C25pt@R10pt{ 
E' \ar[r]^(0.3){\gamma := 1\otimes k^{\sharp}}_(0.3){\simeq} & 
\wedge (\wedge W\otimes
(\wedge W')_*)/I \ar[r]^(0.45){w\otimes 1}_(0.45){\simeq} &
\wedge(\wedge\widetilde{W}\otimes (\wedge W')_*)/I = \widetilde{E}.  
}
$$
Moreover, we choose a model $\zeta'$ for the action 
$G\times G/U \stackrel{tr}{\to} G/U \stackrel{p}{\to} G/K$ defined by 
the composite $\zeta' : \wedge \widetilde{W} \stackrel{\zeta}{\to} 
\wedge V_G \otimes \wedge \widetilde{W} \stackrel{1\otimes \varphi}{\to}   
\wedge V_G \otimes \wedge W'$, where $\zeta$ is the Sullivan
representative for the left translation $tr$ 
mentioned in Lemmas \ref{lem:model-action} and \ref{lem:model-action2}. 
Then the map $\zeta'$ deduces a model 
$$
\widetilde{\widetilde{\mu}} : E'/M_u \to \wedge V_G
\eqnlabel{add-2}
$$ 
for 
$\lambda : G \to \F(G/U, (G/K)_\K ; e\circ p)$ as in Theorem
\ref{thm:key}. Observe that 
$$
\widetilde{\mu}(v_i\otimes e_j)=(-1)^{\tau(|e_j|)}
\langle (1\otimes \varphi)\zeta w(v_i), k^{\sharp}e_j \rangle 
\ \  \text{and} \ \ 
u=\varepsilon\circ \widetilde{\mu}, 
\eqnlabel{add-3}
$$
where $\varepsilon : \wedge V_G \to \K$ denotes the augmentation. 
In the next section, we shall prove Theorem  \ref{thm:main3} by using the
model $\widetilde{\widetilde{\mu}} : E'/M_u \to \wedge V_G$.

\section{Proof of Theorem \ref{thm:main3}}  

Let $G$ and $U$ be the Lie group $U(m+k)$ and a maximal rank
subgroup of the form $U(m_1)\times \cdots\times U(m_s) \times U(k)$,
respectively. Without loss of generality, we can assume that 
$m_1 \geq \cdots \geq m_s \geq k$. 
Let $K$ be the subgroup $U(m)\times U(k)$ of $U$, 
where $m=m_1 +\cdots +m_s$. 
Then the Leray-Serre spectral sequence with coefficients 
in the rational field for the fibration $p : G/U \to G/K$ with fibre
$K/U$ collapses at the $E_2$-term because 
the cohomologies of $G/K$ and of $K/U$ are algebras generated by
elements with even degree.  
Therefore it follows that the induced
map $p^* : H^*(G/K) \to H^*(G/U)$ is a monomorphism.     
In order to prove Theorem \ref{thm:main3}, 
we apply Theorem \ref{thm:key} to the function space 
$\F(G/U, G/K, p)$.

Let $P=\{S_1, ..., S_n\}$ be a family consisting of subsets of 
the finite ordered set $\{1, ..., s \}$ which satisfies the condition that 
$x < y$ whenever $x \in S_i$ and $y \in S_{i+1}$.  
Define $\sharp^l P$ to be the number of elements of the set 
$\{S_j\in P \ | \  |S_j| =l \}$. Let $k$ be a fixed integer.  
We call such the family $P$ a $(i_1, ..., i_k)$-type block partition 
of $\{1, ..., s \}$ if $\sharp^l P = i_l$ for $1 \leq l \leq k$. 
Let $Q^{(s)}_{i_1, ..., i_k}$ denote the number of 
$(i_1, ..., i_k)$-type block partitions of $\{1, ..., s \}$. 

We construct a minimal model explicitly for the homogeneous space 
$U(m+k)/U(m)\times U(k)$. Assume that $m\geq k$. 
As in the proof of Theorem \ref{thm:main2}, we have a Sullivan model 
for $U(m+k)/U(m)\times U(k)$ of the form 
$$
(\wedge \widetilde{W}, d)
=(\wedge (\tau_1, ...., \tau_{m+k}, c_1 ,...,c_k, c_1', ..., c_m' ), d)
$$ with 
$d\tau_l = \sum_{i+j=l}c_i'c_j$. 

\begin{lem}
\label{lem:mini-model}
There exists a sequence of quasi-isomorphisms
$$
\xymatrix@=20pt{ 
\wedge \widetilde{W} & \wedge W_{(1)} \ar[l]_{\simeq} & \cdots \ar[l]_{\simeq} &  
\wedge W_{(s)} \ar[l]_{\simeq} & \cdots \ar[l]_{\simeq} &  
\wedge W_{(m)} \ar[l]_{\simeq}
}
$$
in which, for any $s$, $(\wedge W_{(s)}, d_{(s)})$ is a DGA of the form 
$$
\wedge W_{(s)}= 
\wedge (\tau_{s+1}, ...., \tau_{m+k}, c_1 ,...,c_k, c_{s+1}', ..., c_m'
 ) 
 \ \ \  \text{with}
$$
\begin{eqnarray*}
d_{(s)}\tau_l &=& c_l' + c_{l-1}'c_1 + \cdots +c_{s+1}'c_{l-(s+1)}  \\
        & & + \sum_{i_1+2i_2+\cdots +ki_k=s}(-1)^{i_1+ \cdots +i_k}
    Q^{(s)}_{i_1, ..., i_k}c_1^{i_1}\cdots c_k^{i_k}c_{l-s} \\
     & & + \sum_{i_1+2i_2+\cdots +ki_k=s-1}(-1)^{i_1+ \cdots +i_k}
    Q^{(s-1)}_{i_1, ..., i_k}c_1^{i_1}\cdots c_k^{i_k}c_{l-(s-1)}  \\ 
    & & + \cdots + (-c_1)c_{l-1} + c_l   
\end{eqnarray*}
for $s+1 \leq l \leq m+k$, where $c_i = 0$ for $i < 0$ or $i > k$. 
\end{lem}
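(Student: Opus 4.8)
The plan is to prove the lemma by induction on $s$, building $(\wedge W_{(s)},d_{(s)})$ from $(\wedge W_{(s-1)},d_{(s-1)})$ by contracting away the pair of generators $c'_s,\tau_s$. First I set $(\wedge W_{(0)},d_{(0)}):=(\wedge\widetilde W,d)$ and observe that the displayed formula for $d_{(s)}\tau_l$ specializes at $s=0$ to $d\tau_l=\sum_{i+j=l}c'_ic_j$ (with $c_0=c'_0=1$), so the induction is anchored. Throughout, abbreviate $A_r:=\sum_{i_1+2i_2+\cdots+ki_k=r}(-1)^{i_1+\cdots+i_k}Q^{(r)}_{i_1,\dots,i_k}c_1^{i_1}\cdots c_k^{i_k}$, a polynomial in $c_1,\dots,c_k$ with $A_0=1$, so that the target differential reads $d_{(s)}\tau_l=\sum_{j=s+1}^{l}c'_jc_{l-j}+\sum_{r=0}^{s}A_rc_{l-r}$. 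Taking $l=s$ in $\wedge W_{(s-1)}$ gives $d_{(s-1)}\tau_s=c'_s+\psi_s$, where $\psi_s:=\sum_{r=0}^{s-1}A_rc_{l-r}\big|_{l=s}=\sum_{j\ge1}c_jA_{s-j}$ lies in the polynomial subalgebra $\K[c_1,\dots,c_k]\subset\wedge W_{(s)}$.

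For the inductive step I would define $d_{(s)}$ on $\wedge W_{(s)}=\wedge(\tau_{s+1},\dots,\tau_{m+k},c_1,\dots,c_k,c'_{s+1},\dots,c'_m)$ by the stated closed formula, and define a map $\sigma\colon(\wedge W_{(s)},d_{(s)})\to(\wedge W_{(s-1)},d_{(s-1)})$ on generators by $\sigma(\tau_l)=\tau_l-\tau_sc_{l-s}$ for $l\ge s+1$ and $\sigma=\mathrm{id}$ on $c_1,\dots,c_k,c'_{s+1},\dots,c'_m$. Checking that $\sigma$ is a DGA map is then a short computation: since the $c$'s are $d$-cocycles, $d_{(s-1)}\sigma(\tau_l)=d_{(s-1)}\tau_l-(d_{(s-1)}\tau_s)c_{l-s}=\sum_{j=s+1}^{l}c'_jc_{l-j}+\sum_{r=0}^{s-1}A_rc_{l-r}-\psi_sc_{l-s}$, and this agrees with $\sigma(d_{(s)}\tau_l)=d_{(s)}\tau_l$ precisely when $-\psi_s=A_s$, i.e.\ $A_s=-\sum_{j\ge1}c_jA_{s-j}$.

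This last identity is the combinatorial core, and the plan there is to prove the recursion $Q^{(s)}_{i_1,\dots,i_k}=\sum_{j\,:\,i_j\ge1}Q^{(s-j)}_{i_1,\dots,i_j-1,\dots,i_k}$ (with $Q^{(0)}_{0,\dots,0}=1$) by deleting the first block $S_1$ of a $(i_1,\dots,i_k)$-type block partition $P=\{S_1,\dots,S_n\}$ of $\{1,\dots,s\}$: if $|S_1|=j$, then $\{S_2,\dots,S_n\}$ is, after identifying $\{j+1,\dots,s\}$ with $\{1,\dots,s-j\}$, a $(i_1,\dots,i_j-1,\dots,i_k)$-type block partition of $\{1,\dots,s-j\}$, and this correspondence is a bijection. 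Multiplying this recursion by $(-1)^{i_1+\cdots+i_k}c_1^{i_1}\cdots c_k^{i_k}$ and summing over all $(i_1,\dots,i_k)$ with $i_1+2i_2+\cdots+ki_k=s$ converts it (with the sign bookkeeping $(-1)^{\sum i_l}=-(-1)^{\sum i_l-1}$ absorbed into the $-c_j$ factor) into exactly $A_s=-\sum_{j\ge1}c_jA_{s-j}=-\psi_s$, as needed.

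It then remains to see that $\sigma$ is a quasi-isomorphism, and here the plan is a tensor decomposition. The changes of generators $\tau_l\rightsquigarrow\tau_l-\tau_sc_{l-s}$ ($l\ge s+1$) and $c'_s\rightsquigarrow c''_s:=c'_s+\psi_s$ are invertible, so $\wedge W_{(s-1)}\cong\sigma(\wedge W_{(s)})\otimes\wedge(\tau_s,c''_s)$ as graded algebras; since $\sigma$ is a DGA map its image is a sub-DGA, while on the second factor $d_{(s-1)}\tau_s=c''_s$ and $d_{(s-1)}c''_s=0$, so $(\wedge(\tau_s,c''_s),d_{(s-1)})$ is contractible. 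Hence $\sigma$ induces an isomorphism on cohomology. Iterating for $s=1,\dots,m$ yields the asserted chain of quasi-isomorphisms; as a byproduct, in $\wedge W_{(m)}$ no $c'$ generators survive and (using $m\ge k$) every $d_{(m)}\tau_l$, $m+1\le l\le m+k$, is decomposable, so $\wedge W_{(m)}$ is the minimal model. I expect the main obstacle to be the bookkeeping in this contraction step: one must choose the correction $\tau_l\rightsquigarrow\tau_l-\tau_sc_{l-s}$ so that it simultaneously exhibits the contractible tensor factor and keeps the resulting $d_{(s)}$ in the stated closed form — and then link that form, with correct signs, to the block‑partition recursion.
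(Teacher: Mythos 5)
Your proof is correct and takes essentially the same approach as the paper's: both proceed inductively by contracting one acyclic pair $(\tau_s, c'_s)$ at a time, and both hinge on the identical block-partition recursion for $Q^{(s)}_{i_1,\dots,i_k}$ (the paper's Claim~1). The only cosmetic differences are that you delete the first block rather than the last when proving the recursion, you anchor the induction at $s=0$ rather than $s=1$, and you justify the quasi-isomorphism step by an explicit tensor decomposition $\wedge W_{(s-1)}\cong\sigma(\wedge W_{(s)})\otimes\wedge(\tau_s,c''_s)$ where the paper simply invokes a spectral-sequence argument.
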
   

\begin{proof}
We shall prove this lemma by induction on the integer $s$. 
We first observe that $d\tau_2= c_2' -c_1c_1 + c_2$ in $\wedge W_{(1)}$
because $Q^{(1)}_{i_1}=1$. 
Define a map 
$\varphi : \wedge W_{(1)} \to \wedge \widetilde{W}$ by 
$\varphi(c_i)=c_i$, $\varphi(c_j')=c_j'$ and 
$\varphi(\tau_2)=\tau_2-\tau_1c_1$. 
Since $d\tau_1=c_1' + c_1$ in $\wedge W$, it follows that 
$\varphi$ is a well-defined quasi-isomorphism. Suppose that 
$(\wedge W_{(s)}, d_{(s)})$ in the lemma can be constructed for some 
$s \leq m-1$. In particular, we have 
$$
d_{(s)}\tau_{s+1} = c_{s+1}' + \sum_{0\leq j \leq s}
\sum_{\ \ i_1+2i_2+\cdots + ki_k=j}(-1)^{i_1+ \cdots +i_k}
    Q^{(j)}_{i_1, ..., i_k}c_1^{i_1}\cdots + c_k^{i_k}c_{s+1-j}. 
$$  

\noindent
{\it Claim 1}. 
$$
Q^{(s+1)}_{i_1, ..., i_k} = Q^{(s)}_{i_1-1, i_2, ..., i_k}+ 
Q^{(s-1)}_{i_1, i_2-1,  ..., i_k} + \cdots + Q^{(s+1-k)}_{i_1, ...,
 i_k-1}. 
$$
Claim 1 implies that 
$$
d_{(s)}\tau_{s+1}= c_{s+1}' - \sum_{\ \ i_1+2i_2+\cdots + ki_k=s+1}
(-1)^{i_1+ \cdots + i_k}
    Q^{(s+1)}_{i_1, ..., i_k}c_1^{i_1}\cdots c_k^{i_k}. 
$$
We define $d_{(s+1)}\tau_{l+1}$ in $\wedge W_{(s+1)}$ by replacing the factor 
$c_{s+1}'$ which appears in $d_{(s)}\tau_{l+1}$ with 
$c_{s+1}' - d_{(s)}\tau_{s+1}$, namely, 
\begin{eqnarray*}
d_{(s+1)}\tau_{l+1} &=& c_{l+1}' 
+ c_l'c_1 + \cdots +c_{s+2}'c_{(l+1)-(s+2)}  \\
        & & + \sum_{i_1+2i_2+\cdots + ki_k=s+1}(-1)^{i_1+ \cdots + i_k}
    Q^{(s+1)}_{i_1, ..., i_k}c_1^{i_1}\cdots c_k^{i_k}c_{l-s} \\
    & & + \cdots + (-c_1)c_l + c_{l+1}.    
\end{eqnarray*}
Moreover define a map $\varphi : \wedge W_{(s+1)} \to \wedge W_{(s)}$ by 
$\varphi(c_i)=c_i$, $\varphi(c_j')=c_j'$ and 
$\varphi(\tau_{l+1})= \tau_{l+1} - \tau_{s+1}c_{l+1- (s+1)}$. 
It is readily seen that $\varphi$ is a well-defined DGA map. 
The usual spectral sequence argument enables us to deduce that 
$\varphi$ is a quasi-isomorphism. This finishes the proof. 
\end{proof}

\noindent
\medskip
{\it Proof of Claim 1.}
Let $\{P_l\}$ denote the family 
of all $(i_1, ..., i_k)$-type block partitions of $\{1, ..., s+1\}$. 
We write $P_l= \{S_1^{(l)}, ..., S_{n(l)}^{(l)}\}$.
Then $\{P_l\}$ is represented as the disjoint union of the families 
of $(i_1, ..., i_k)$-type block partitions whose last sets 
$S_{n(l)}^{(l)}$ consist of $j$ elements, namely,      
$\{P_l\} = \amalg_{1\leq j \leq k} \{P_l \ | \  |S_{n(l)}| = j \}$. 
It follows that 
$$\Bigl|\{P_l \ | \ |S_{n(l)}| = j \}\Bigr| = Q^{(s+1-j)}_{i_1, ..., i_{j-1}, i_j-1, 
i_{j+1}, ..., i_k}.
$$ 
We have the result. 
\hfill\qed

\medskip
Recall the minimal model $(\wedge W_{(m)}, d)$ for $G/K$ 
in Lemma \ref{lem:mini-model}. We see that 
$\deg d\tau_{m+1}= \deg c_1^{m}c_1 = 2(m+1)$ and that 
$d\alpha = 0$ for any element $\alpha$ with $\deg\alpha \leq 2m+1$. 
This yields that $c_1^m\neq 0$ in 
$H^*(G/K;\K)$. As mentioned before Lemma \ref{lem:mini-model}, the
induced map $p^* : H^*(G/K) \to H^*(G/U)$ is injective. Therefore we
have $(p^*c_1)^s\neq 0$ for $s \leq m$. 

Let $\widetilde{\widetilde{\mu}} : \widetilde{E}/M_u \to
\wedge V_G$ be the model for the map 
$\lambda : G \to \F(G/U, (G/K)_\K; e\circ p)$ 
mentioned in the previous section; see (5.2) and (5.3). 
The following four lemmas are keys to proving Theorem \ref{thm:main3}.
The proofs are deferred to the end of this section.  

\begin{lem}
\label{lem:delta-1}
$\delta_0(\tau_{m+(m-s+1)}\otimes ((p^*c_1)^m)_*) = (-1)^mc_{m-s+1}$ if 
$m\neq s$. 
\end{lem}

\begin{lem}
\label{lem:mu-1}
$\widetilde{\widetilde{\mu}}(\tau_{m+(m-s+1)}\otimes ((p^*c_1)^m)_*)= 0$ 
if $m\neq s$. 
\end{lem}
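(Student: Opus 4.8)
The plan is to compute $\widetilde{\widetilde{\mu}}(\tau_{m+(m-s+1)}\otimes ((p^*c_1)^m)_*)$ directly from the formulas (5.2) and (5.3), exactly as was done for the element $z$ in the proof of Theorem~\ref{thm:main2}. Recall that $\widetilde{\widetilde{\mu}}$ is induced by $\widetilde{\mu}(v\otimes e)=(-1)^{\tau(|e|)}\langle (1\otimes\varphi)\zeta w(v),\, k^{\sharp}e\rangle$, where $\zeta$ is the Sullivan representative for the left translation furnished by Lemmas~\ref{lem:model-action} and~\ref{lem:model-action2}, and $w$ is the minimal model map. So first I would unwind what $\zeta$ does to the generator $\tau_{m+(m-s+1)}$ of $\wedge\widetilde W$ (after pulling back through $w$). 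By Lemma~\ref{lem:model-action2} the action model on the polynomial generators $x_i$ has the shape $\psi(x_i)=x_i\otimes 1\otimes 1+1\otimes x_i\otimes 1+\sum_n X_n\otimes X_n'C_n$ with $X_n'\in\wedge^+(x_1,\dots,x_l)$, and after passing to $\zeta=\widetilde\alpha\otimes 1$ one gets $\zeta(x_i)=x_i\otimes 1+\sum_n X_n\otimes X_n'C_n$ in $\wedge V_G\otimes\wedge W'$; the key feature is that every correction term has a factor $X_n'\in\wedge^+(x_1,\dots,x_l)$ lying in the image of $k^{\sharp}$'s "annihilator," i.e.\ $k(X_n')=0$.

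The next step is to control $\zeta(\tau_{m+(m-s+1)})$. Since $\tau$-generators are not polynomial generators of $BG$, I would argue that $\zeta$ can be chosen (relative to $\wedge V_{BG}$, as in Lemma~\ref{lem:model-action}) so that $\zeta(\tau_l)=1\otimes\tau_l$ plus terms each containing a positive-degree factor from $\wedge^+(x_1,\dots,x_l)$ in the $\wedge W'$-tensor-slot; applying $\varphi$ and then pairing against $k^{\sharp}(((p^*c_1)^m)_*)$ kills all such correction terms because $k$ sends the relevant generators to $0$, just as in the displayed computation $x_{i_t}-\sum X_n\langle k(X_n')C_n,u_{t*}\rangle=x_{i_t}$ in the proof of Theorem~\ref{thm:main2}. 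Thus $\widetilde{\widetilde{\mu}}(\tau_{m+(m-s+1)}\otimes((p^*c_1)^m)_*)$ reduces to $\pm\langle 1\otimes\varphi(\tau_{m+(m-s+1)}),\, k^{\sharp}(((p^*c_1)^m)_*)\rangle$, i.e.\ $\pm\varepsilon$ applied to the coefficient of the dual basis element; since the $\wedge V_G$-component of the surviving term is $1$ (no positive-degree factor from $V_G=\wedge^+(x_1,\dots,x_k)$ appears), this lies in $\wedge^0 V_G$. Passing to indecomposables (which is what $\widetilde{\widetilde\mu}$ records in Theorem~\ref{thm:key}) gives $0$.

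The main obstacle I anticipate is bookkeeping the relative-homotopy choice of $\zeta$ carefully enough to be sure that no term of the form $x_j\otimes(\text{something pairing nontrivially with }((p^*c_1)^m)_*)$ sneaks into $\zeta(\tau_{m+(m-s+1)})$: a priori the differential $d\tau_{m+(m-s+1)}$ is a polynomial in the $c_i$'s and $c_j'$'s of degree $2(m+(m-s+1))$, and one must check that its ``primitive'' $\tau$ behaves well under the action model. The clean way around this is to use the compatibility established in Section~4 — $\zeta$ is a model for $tr:G\times G/U\to G/U$ and fits into the commuting square with $\psi$ over $\wedge V_{BG}$ — so the image of any $\tau$-generator under $\zeta$ is determined (up to the relative homotopy) by its differential, which by Lemma~\ref{lem:model-action2} has all its $\zeta$-corrections landing in the ideal generated by $\wedge^+(x_1,\dots,x_l)\subset\wedge W'$, on which $k$ vanishes. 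Once that is in place the computation collapses exactly as above, and the lemma follows.
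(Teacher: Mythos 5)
Your argument has a genuine gap, and the tell is that it proves too much: the identical reasoning, applied verbatim to $\tau_{m+1}\otimes ((p^*c_1)^s)_*$, would give $\widetilde{\widetilde{\mu}}(\tau_{m+1}\otimes ((p^*c_1)^s)_*)=0$, contradicting Lemma~\ref{lem:mu-2} (the answer there is $\tau_{m-s+1}$). The error is in your description of $\zeta$. Lemma~\ref{lem:model-action2} gives $\psi(x_i)=x_i\otimes 1\otimes 1 + 1\otimes x_i\otimes 1 + \sum_n X_n\otimes X_n'C_n$, and in $\zeta=\widetilde\alpha\otimes 1$ this produces a leading term $\tau_l\otimes 1\otimes 1$ whose $\wedge V_G$-slot is the generator $\tau_l$ itself, not $1$. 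That term is \emph{not} annihilated by $k$ (the $\wedge W'$-slot there is just $1$, and $k(1)=1$), so you cannot discard it; it is precisely this term that survives in Lemma~\ref{lem:mu-2}. Your final step ``the $\wedge V_G$-component of the surviving term is $1$, hence it lies in $\wedge^0 V_G$ and dies after passing to indecomposables'' is therefore unjustified; note also that $\tau_{m+(m-s+1)}\otimes ((p^*c_1)^m)_*$ has degree $2(m-s)+1>0$, so the value of $\widetilde{\widetilde\mu}$ on it cannot on degree grounds lie in $\wedge^0 V_G$ in the first place.

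What the paper actually uses, and what your plan never engages with, is the explicit form of the minimal-model map $w=\varphi_1\circ\cdots\circ\varphi_m:\wedge W_{(m)}\to\wedge\widetilde W$ built in Lemma~\ref{lem:mini-model}. The element $w(\tau_{m+(m-s+1)})$ is a sum of terms $\tau_a\cdot(\text{monomial in the }c_i)$, and the decisive observation is a combinatorial one: when $s\neq m$ none of these monomials equals $c_1^m$. Consequently, after applying $\zeta$ and $\varphi$ and pairing against $k^\sharp((p^*c_1^m)_*)$, the only candidates for a nonzero contribution (those whose $\wedge W'$-slot, pushed down by $k$ to $H^*(G/U)$, could hit $(p^*c_1)^m$) simply do not occur, so the whole pairing vanishes. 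To repair your argument you would have to make exactly this computation with $w$; the abstract ``all corrections land in $\ker k$'' step does not replace it, because, as above, the surviving $\tau_a\otimes 1$ term must be killed by the combinatorics of $w$, not by $k$.
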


\begin{lem}
\label{lem:delta-2}
$\delta_0(\tau_{m+1}\otimes ((p^*c_1)^s)_*) = (-1)^ssc_{m-s+1}$. 
\end{lem}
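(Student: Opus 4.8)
The plan is to compute $\delta_0$ directly from the HBS-model formula for the differential on $E'/M_u$, using the explicit minimal model $(\wedge W_{(m)}, d)$ of $G/K$ produced in Lemma \ref{lem:mini-model}. Recall from Remark \ref{rem:variants}(ii) and (2.3)--(2.5) that $\delta_0$ on generators $v_i \otimes e_j$ is the linear part of the differential $\delta$, and by formula \theeqn{} (the formula labelled \texttt{add-1}) $\delta(v\otimes e)$ is obtained from $d(v)$ by dualizing against the iterated coproduct of $e$. So the first step is to read off $d\tau_{m+1}$ in $\wedge W_{(m)}$: by Lemma \ref{lem:mini-model} with $l = m+1$ and $s = m$, the relevant term is $d\tau_{m+1} = c_{m+1}' + (\text{lower } c'\text{-terms with }c_i\text{'s}) + \cdots + \sum_{i_1 + 2i_2 + \cdots + ki_k = m}(-1)^{i_1 + \cdots + i_k} Q^{(m)}_{i_1,\dots,i_k} c_1^{i_1}\cdots c_k^{i_k} c_1 + \cdots + (-c_1)c_m + c_{m+1}$, where $c_i = 0$ for $i < 0$ or $i > k$.

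Next I would identify which monomials in $d\tau_{m+1}$ can pair nontrivially with $((p^*c_1)^s)_*$ after passing to the quotient $\widetilde{E}/M_u$. Since $p^*c_1$ is (up to sign) the class $c_1$ in $H^*(G/U)$ and $(p^*c_1)^s \neq 0$ there for $s \le m$ (established just before the lemma), the element $((p^*c_1)^s)_*$ is dual to the degree-$2s$ class $c_1^s$. In $Q(\widetilde{E}/M_u)$ the linear part $\delta_0$ of $\delta(\tau_{m+1}\otimes ((p^*c_1)^s)_*)$ picks out, from each monomial $\mu$ appearing in $d\tau_{m+1}$, the terms of the form $(\text{generator}\otimes 1_*)\cdot(\cdots \otimes (\text{rest})_*)$ where the "rest" pairs with the appropriate piece of the coproduct of $c_1^s$; modulo $M_u$ and modulo decomposables, the surviving contribution comes precisely from the monomials $c_1^{s-1}\cdot c_{m-s+1}$ appearing in $d\tau_{m+1}$ (the one generator that is not a $c_1$ and not killed by the augmentation $u$ gives the linear term $c_{m-s+1}$, and each of the $s$ positions among the $c_1$'s carried by $c_1^{s-1}$ together with the choice of which copy, via the coproduct, yields the combinatorial factor $s$). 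Tracking the sign $(-1)^s$ through the Koszul conventions in formula \theeqn{} (\texttt{add-1}) and through the sign $(-1)^{\tau(|b_j|)}$ in the definition of $m(ev)$ and of $\widetilde{u}$ in (2.8) gives the stated coefficient $(-1)^s s$.

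The main obstacle will be the careful bookkeeping of the coproduct on $(\wedge W_{(m)})_*$ restricted to the class dual to $c_1^s$, together with pinning down all the Koszul signs: one must verify that all monomials in $d\tau_{m+1}$ other than those proportional to $c_1^{s-1}c_{m-s+1}$ either land in $M_u$ (because they contain a factor $c_i'$, which maps to $0$ under $k$ and hence is identified with its augmentation value $0$) or contribute only decomposable elements to $Q(\widetilde{E}/M_u)$ and hence vanish in the indecomposables. Once the relevant monomial is isolated, the count of the factor $s$ is the elementary observation that $c_1^s$ has $s$ "distinguished" copies of $c_1$ in its symmetric coalgebra structure; the rest is a sign computation identical in spirit to the one carried out for $\delta_0$ in Example \ref{ex:evaluation-map}.
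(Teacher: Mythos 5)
The proposal gets the general setup right (read off $d\tau_{m+1}$ from Lemma~\ref{lem:mini-model}, feed it into the HBS formula \textnormal{(2.1)}, isolate the contribution that survives into the indecomposables of $\widetilde{E}/M_u$), but it misidentifies the mechanism that produces the crucial factor $s$, which is the actual content of the lemma.

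You attribute the factor $s$ to the coproduct: ``each of the $s$ positions among the $c_1$'s \dots\ together with the choice of which copy, via the coproduct, yields the combinatorial factor $s$.'' That count is in fact exactly $1$. The relevant monomial has the form $c_1^s c_{m-s+1}$ (your $c_1^{s-1}c_{m-s+1}$ is off by a factor of $c_1$ and has degree $2m$, not $2m+2$), and the iterated coproduct of $((p^*c_1)^s)_* = \frac{1}{s!}\left((p^*c_1)_*\right)^s$ contributes $s!$ orderings of the $s$ copies of $(p^*c_1)_*$, which cancel the $\frac{1}{s!}$ -- precisely as in the paper's proof of Lemma~\ref{lem:delta-1}, where the analogous $m!$ cancels and no residual combinatorial factor survives. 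With $c_{m-s+1}$ forced to pair with $1_*$, there is a single contributing term per occurrence of the monomial, not $s$.

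The factor $s$ actually comes from the fact that the monomial $c_1^s c_{m-s+1}$ occurs \emph{twice} in the explicit expansion of $d\tau_{m+1}$ given by Lemma~\ref{lem:mini-model}. Writing $d\tau_{m+1} = \sum_l \sum_{i_1+2i_2+\cdots+ki_k=l}(-1)^{i_1+\cdots+i_k}Q^{(l)}_{i_1,\dots,i_k}c_1^{i_1}\cdots c_k^{i_k}c_{m+1-l}$, the monomial $c_1^s c_{m-s+1}$ arises once with $l=m$ (trailing factor $c_1$, so $(i_1,\dots,i_k)=(s-1,0,\dots,1,\dots,0)$ with $i_{m-s+1}=1$), contributing the block-partition count $Q^{(m)}_{s-1,0,\dots,1,\dots,0}$, and once with $l=s$ (trailing factor $c_{m-s+1}$, so $(i_1,\dots,i_k)=(s,0,\dots,0)$), contributing $Q^{(s)}_{s,0,\dots,0}=1$. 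The lemma's coefficient is obtained by \emph{summing these two $Q$-values}. Your proposal never observes this double occurrence nor uses the $Q^{(l)}_{i_1,\dots,i_k}$ combinatorics, which is the real input from Lemma~\ref{lem:mini-model}; the sign bookkeeping and the informal appeal to Example~\ref{ex:evaluation-map} cannot substitute for it, since that example concerns a single monomial $x^m$ with all factors identical and so produces a binomial coefficient by an entirely different mechanism.
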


\begin{lem}
\label{lem:mu-2}
$\widetilde{\widetilde{\mu}}(\tau_{m+1}\otimes ((p^*c_1)^s)_*)
= \tau_{m-s+1}$. 
\end{lem}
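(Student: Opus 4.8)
The plan is to unwind the definition of $\widetilde{\widetilde{\mu}}$ and compute directly. By (5.2) and (5.3),
$$
\widetilde{\mu}\big(\tau_{m+1}\otimes((p^*c_1)^s)_*\big)=(-1)^{\tau(|((p^*c_1)^s)_*|)}\big\langle (1\otimes\varphi)\,\zeta\, w(\tau_{m+1}),\ k^{\sharp}(((p^*c_1)^s)_*)\big\rangle ,
$$
so the computation splits into: (i) expressing the minimal-model generator $\tau_{m+1}$ via $w:\wedge W\to\wedge\widetilde{W}$; (ii) pushing the result through the left-translation model $\zeta:\wedge\widetilde{W}\to\wedge V_G\otimes\wedge\widetilde{W}$; (iii) applying $1\otimes\varphi$ to land in $\wedge V_G\otimes\wedge W'$; and (iv) applying $k$ to the $\wedge W'$-factor, pairing with $((p^*c_1)^s)_*$, and reading off the element of $\wedge V_G$ that results. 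The degree count $\deg\big(\tau_{m+1}\otimes((p^*c_1)^s)_*\big)=2m+1-2s=\deg\tau_{m-s+1}$ already forces the answer to be a scalar multiple of $\tau_{m-s+1}$, so the content of the lemma is that this scalar is $1$.

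First I would compute $w(\tau_{m+1})$ by composing the quasi-isomorphisms $\varphi:\wedge W_{(t)}\to\wedge W_{(t-1)}$ of Lemma \ref{lem:mini-model}, each of which fixes the Chern classes $c_1,\dots,c_k$ and sends $\tau_{l+1}\mapsto\tau_{l+1}-\tau_{t}c_{l+1-t}$. Iterating from $\wedge W_{(m)}=\wedge W$ down to $\wedge\widetilde{W}$ writes $w(\tau_{m+1})=\sum_{j\le m+1}\tau_j\,P_j$ with $P_{m+1}=1$ and $P_j$ a polynomial in $c_1,\dots,c_k$ of weight $m+1-j$; a short induction, in the spirit of Claim 1, identifies each $P_j$ and in particular pins down the $c_1^{\,s}$-coefficient of $P_{m-s+1}$ together with its sign.

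Next I would substitute into $\zeta$. Since the left $G$-action on $G/K$ fixes the classes coming from $BK$, one may arrange $\zeta(c_i)=1\otimes c_i$, and from the explicit form in Lemma \ref{lem:model-action2} one gets $\zeta(\tau_l)=\tau_l\otimes 1+1\otimes\tau_l+\sum_n X_n\otimes X_n'C_n$ with $X_n\in\wedge V_G$, $X_n'\in\wedge^+$ of the odd generators, and $C_n\in\wedge^+V_{BG}$. Hence $\zeta(w(\tau_{m+1}))=\sum_j\big(\tau_j\otimes P_j+1\otimes\tau_jP_j+(\text{cross terms})\big)$. Applying $1\otimes\varphi$ and then $k$ on the second factor: $k$ annihilates every odd generator, so the summands $1\otimes\tau_jP_j$ and all cross terms (each of which carries a factor $X_n'$ built from odd generators) are killed, and the pairing against a homology class of degree $-2s$ kills $\tau_{m+1}\otimes 1$. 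Only the terms $\tau_j\otimes P_j$ survive, and $\langle k\varphi(P_j),((p^*c_1)^s)_*\rangle$ vanishes unless $P_j$ has weight $s$, i.e. $j=m-s+1$; so the expression collapses to $\tau_{m-s+1}\cdot\langle k\varphi(P_{m-s+1}),((p^*c_1)^s)_*\rangle$. Using that $k$ multiplies each weight-$s$ Chern monomial by $(-1)^s$, that $(p^*c_1)^s\ne 0$ in $H^*(G/U)$ for $s\le m$ (established before Lemma \ref{lem:mini-model}) so that $((p^*c_1)^s)_*$ is a well-defined functional, and the $c_1^{\,s}$-coefficient of $P_{m-s+1}$ from the previous step, one checks — after combining with the sign $(-1)^{\tau(|((p^*c_1)^s)_*|)}$ in (5.3) — that the scalar equals $1$.

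The main obstacle is exactly this sign and coefficient bookkeeping: tracking $w(\tau_{m+1})$ through the iterated quasi-isomorphisms, tracking how $(p^*c_1)^s$ sits in $H^{2s}(G/U)$ relative to the monomial $c_1^{\,s}$, and keeping the Koszul and $(-1)^{\tau(\cdot)}$ signs produced at each step. The combinatorial core is the content of Lemma \ref{lem:mini-model} and Claim 1 together with the dual of $(p^*c_1)^s$; everything else is routine substitution. As a consistency check, the outcome should match the shape of Lemma \ref{lem:delta-2}, which records $\delta_0$ on the same element and is used in tandem with this lemma in the proof of Theorem \ref{thm:main3}.
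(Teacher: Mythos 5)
Your proposal is correct and follows essentially the same route as the paper's proof: unwind $\widetilde{\widetilde{\mu}}$ via (5.2)--(5.3), extract the coefficient $(-1)^s\tau_{m-s+1}c_1^s$ from $w(\tau_{m+1})$ using Lemma \ref{lem:mini-model}, push through $\zeta=\psi\otimes 1$ using Lemmas \ref{lem:model-action} and \ref{lem:model-action2}, and observe that after applying $k\circ\varphi$ and pairing with $k^{\sharp}((p^*c_1)^s)_*$ only the term $\tau_{m-s+1}\otimes c_1^s$ survives (the cross terms $\sum X_n\otimes X_n'C_n$ carry odd generators killed by $k$, and the $1\otimes\tau_j P_j$ terms are likewise killed). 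The sign bookkeeping you outline — $(-1)^{\tau(|p^*c_1^s|)}=(-1)^s$ cancelling against $(-1)^s$ from $w$ and $(-1)^s$ from $k(c_1^s)$ via the pairing — matches the paper's computation.
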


\medskip
\noindent
{\it Proof of Theorem \ref{thm:main3}}. 
By virtue of Lemmas \ref{lem:delta-1}, \ref{lem:mu-1}, 
\ref{lem:delta-2} and \ref{lem:mu-2}, we have 
\begin{eqnarray*}
&&\delta_0((-1)^m\tau_{m+(m-s+1)}\otimes ((p^*c_1)^m)_* 
 - \frac{(-1)^s}{s}\tau_{m+1}\otimes ((p^*c_1)^s)_*) \\ 
&& =  (-1)^m(-1)^mc_{m-s+1}-  \frac{(-1)^s}{s}(-1)^ssc_{m+s-1}= 0 
 \ \  \ \ \text{and} \\ 
&& \widetilde{\widetilde{\mu}}((-1)^m\tau_{m+(m-s+1)}\otimes ((p^*c_1)^m)_* 
 - \frac{(-1)^s}{s}\tau_{m+1}\otimes ((p^*c_1)^s)_*) \\
&& =  - \frac{(-1)^s}{s}\tau_{m-s+1}, 
\end{eqnarray*}
where $s \leq m-1$. Theorem \ref{thm:key} implies that 
$$
(\lambda_{\K})_i : \pi_i(G_\K) \to \pi_i(\F(G/U, (G/K)_\K, e\circ p))
$$
is injective for $i= \deg \tau_1, ...,\deg \tau_m$. 
Since $d\tau_l = \sum_{i+j}c_i'c_j$ in $(\wedge W)$, it follows that 
$d\tau_{l}$ is decomposable for $l \geq M+1$. Therefore  
Theorem \ref{thm:main2} yields that 
$(\lambda_{\K})_i$ is also injective for 
$i = \deg \tau_{m+1}, ..., \deg \tau_{m+k}$. 

The latter half of Theorem \ref{thm:main3} is obtained by comparing 
the dimension of rational homotopy groups. 
In fact, it follows from the rational model 
for $\aut{{\mathbb C}P^{m-1}}$ mentioned in Example
\ref{ex:evaluation-map} that 
\begin{eqnarray*}
\pi_*(\text{aut}_1({\mathbb C}P^{m-1})\otimes \K)^\sharp &\cong&
H_*(Q(\widetilde{E}/M_u), \delta_0) \\
&\cong& \K\{y\otimes 1_*, y\otimes(x^1)_*, ..., y\otimes (x^{m-2})_* \}.
\end{eqnarray*}
This implies that 
$\dim \pi_i(\text{aut}_1({\mathbb C}P^{m-1}))\otimes \K = 1 = 
\dim \pi_i(SU(m))\otimes \K$ for $i = 3, ..., 2m-1$. 
The result follows from the first assertion.  
This completes the proof.  
\hfill\qed

We conclude this section with proofs of Lemmas 
\ref{lem:delta-1}, \ref{lem:mu-1}, \ref{lem:delta-2} and \ref{lem:mu-2}.

\medskip
\noindent
{\it Proof of Lemma \ref{lem:delta-1}}. 
We regard the free algebra $\wedge (c_1, ..., c_l)$ 
as a primitively generated Hopf algebra. 
Observe that  $(c_i^s)_* = \frac{1}{s!}((c_i)_*)^s$.   
Recall the $0$-simplex $u$ in $\Delta E'$ mentioned in
(5.3). We have $u(c_j\otimes (p^*c_1)_*) = 0$ if $j \neq 1$ and  
\begin{eqnarray*}
u(c_1\otimes
 (p^*c_1)_*)&=&(-1)^{\tau(|p^*(c_1)|)}k^\sharp(p^*(c_1)_*)(\varphi\circ w(c_1)) \\
 &=&(-1)((p^*(c_1)_*)k \circ\varphi\circ w(c_1)=(-1)((p^*(c_1)_*)p^*c_1)= -1. 
\end{eqnarray*} 
For the map $k$ and $q$, see the diagram (5.1) and the ensuing paragraph. 
 Thus it follows that 
\begin{eqnarray*}
&& \delta_0(\tau_{m+(m-s+1)}\otimes ((p^*c_1)^m)_*) \\
&=& 
c_1^mc_{m-s+1}\cdot D^{(m)}(p^*c_1^m)_* = 
c_1^mc_{m-s+1}\cdot \frac{1}{m!}D^{(m)}(p^*c_1)_*^m \\
&=& \frac{1}{m!} c_1^mc_{m-s+1}\cdot \Bigl(
(p^*c_1)_*\otimes 1 \otimes \cdots \otimes
 1 + 1 \otimes (p^*c_1)_* \otimes 1 \otimes \cdots \otimes 1 \Bigr. \\ 
& & \qquad \qquad \qquad \qquad \qquad \qquad \qquad  \qquad \Bigl. + \cdots + 
1 \otimes \cdots \otimes 1 \otimes (p^*c_1)_* \Bigr)^m \\
&=& \frac{1}{m!} c_1^mc_{m-s+1}\cdot (\cdots + m! (p^*c_1)_* \otimes \cdots 
 \otimes (p^*c_1)_*\otimes 1 + \cdots ) \\
&=& u(c_1\otimes (p^*c_1)_*)\cdots u(c_1\otimes (p^*c_1)_*) c_{m-s+1} 
= (-1)^mc_{m-s+1}. 
\end{eqnarray*}
\hfill\qed

\medskip
\noindent
{\it Proof of Lemma \ref{lem:mu-1}}. Recall the quasi-isomorphism 
$\varphi_{s+1} : \wedge W_{(s+1)} \to \wedge W_{(s)}$ in the proof of  
Lemma \ref{lem:mini-model} 
which is defined by 
$\varphi(\tau_{l+1})=\tau_{s+1}-\tau_{l+1}c_{l+1-(s+1)}$. 
Let $w$ denote the composite 
$\varphi_{1}\circ \cdots \circ \varphi_{m} : \wedge W = \wedge W_{(m)}
\to \wedge \widetilde{W}$. It is readily seen that 
$w(\tau_{m+(m-s+1)})$ does not
have the element $c_1^m$ as a factor if $s\neq m$. 
Hence using the DGA map $\zeta'$ in Lemma \ref{lem:model-action},  
we have   
$$\widetilde{\widetilde{\mu}}(\tau_{m+(m-s+1)}\otimes ((p^*c_1)^m)_*)= 
(-1)^{\tau(|p^*c_1^m|)}\langle 
(1\otimes \varphi)\zeta w(\tau_{m+(m-s+1)}), k^\sharp(p^*c_1^m)_* 
\rangle = 0.$$
See (5.1) for the notations.  Observe that 
$H^*(G/K)\cong H^*(\wedge W)\cong \K[c_1, ..., c_k]$ for $* \leq 2m$. 
This completes the proof. \hfill\qed

\medskip
\noindent
{\it Proof of Lemma \ref{lem:delta-2}}. 
From Lemma \ref{lem:mini-model}, we see that in $\wedge W_{(m)}$,
\begin{eqnarray*}
d\tau_{m+1} &=& \sum_{i_1+2i_2+\cdots +ki_k=m}(-1)^{i_1+ \cdots +i_k}
    Q^{(m)}_{i_1, ..., i_k}c_1^{i_1}\cdots c_k^{i_k}c_{1} \\
        & & + \sum_{i_1+2i_2+\cdots +ki_k=m-1}(-1)^{i_1+ \cdots +i_k}
    Q^{(m-1)}_{i_1, ..., i_k}c_1^{i_1}\cdots c_k^{i_k}c_{2} \\
     & & + \cdots + \sum_{i_1+2i_2+\cdots +ki_k=l}(-1)^{i_1+ \cdots +i_k}
    Q^{(l)}_{i_1, ..., i_k}c_1^{i_1}\cdots c_k^{i_k}c_{m-l+1}  \\ 
    & & + \cdots  
\end{eqnarray*}
Suppose that $c_1^{i_1}\cdots c_k^{i_k}c_{m-l+1}\otimes ((p^*c_1)^s)_*
\neq 0$ in $Q(\widetilde{E}/M_u)$,  
where $i_1+2i_2+\cdots ki_k=l$. Then we have \\
(1) $l= m$ and $c_1^{i_1}\cdots c_k^{i_k}= c_1^{s-1}c_{m-s+1}$ or \\
(2)  $l\neq m$, $l=s$ and $c_1^{i_1}\cdots c_k^{i_k}= c_1^s$. \\
It follows that 
$(-1)^{i_1+ \cdots +i_k}Q^{(m)}_{i_1, ..., i_k}c_1^{s-1}c_{m-s+1}c_1
=(-1)^{s-1+1}(s-1)c_1^sc_{m-s+1}$ if 
$(i_1, ..., i_k) = (s-1, 0, ..., 0, 1, 0, ...,0 )$ with $i_{m-s+1}= 1$
and that $Q^{(s)}_{i_1, ..., i_k}c_1^sc_{m-s+1}
=(-1)^s\cdot 1 \cdot c_1^sc_{m-s+1}$ if $(i_1, ..., i_k) = (s, 0, ...,0 )$. 
This fact allows us to conclude that 
$\delta_0(\tau_{m+1}\otimes  ((p^*c_1)^s)_*)= (-1)^s(s-1)c_{m-s+1} + 
(-1)^sc_{m-s+1}=(-1)^ssc_{m-s+1}$. We have the result. 
\hfill\qed

\medskip
\noindent
{\it Proof of Lemma \ref{lem:mu-2}}. 
In order to compute $\widetilde{\widetilde{\mu}}$, 
we determine 
$\langle (1\otimes \varphi)\zeta w(\tau_{m+1}), k^\sharp(p^*c_1^s)_* \rangle$. 
With the the same notation as in the proof of Lemma \ref{lem:mu-1}, we
have $w(\tau_{m+1})= \cdots + (-1)^s\tau_{m-s+1}c_1^s + \cdots $. 
Lemmas \ref{lem:model-action} and \ref{lem:model-action2} imply that 
\begin{eqnarray*}
\zeta(\tau_{m-s+1}c_1^s)&=& \psi\otimes 1(\tau_{m-s+1}\otimes c_1^s) \\
&=&(\tau_{m-s+1}\otimes 1 \otimes 1 + 1\otimes \tau_{m-s+1} \otimes 1 
+ \sum_{n}X_n\otimes X_n' C_n)c_1^s.  
\end{eqnarray*}
Thus it follows that 
\begin{eqnarray*}
\widetilde{\widetilde{\mu}}(\tau_{m+1}\otimes ((p^*c_1)^s)_*)&=& 
(-1)^{\tau(|p^*c_1^s|)}\langle (1\otimes \varphi)\zeta w(\tau_{m+1}), 
k^\sharp(p^*c_1^s)_* 
\rangle \\
&=& (-1)^{s+s}\langle (1\otimes \varphi)\zeta(\tau_{m-s+1}c_1^s), 
k^\sharp(p^*c_1^s)_* 
\rangle \\
&=& \tau_{m-s+1}\langle \varphi(c_1^s), k^\sharp(p^*c_1^s)_* \rangle 
+ \langle \varphi(\tau_{m-s+1}c_1^s), k^\sharp(p^*c_1^s)_* \rangle  \\
& & + 
\sum_{n}X_n \langle \varphi(X_n'C_nc_1^s),k^\sharp(p^*c_1^s)_* \rangle \\
&=& \tau_{m-s+1}\langle k\varphi(c_1^s), (p^*c_1^s)_* \rangle 
+ \langle k\varphi(\tau_{m-s+1}c_1^s), (p^*c_1^s)_* \rangle  \\
& & + 
\sum_{n}X_n \langle k\varphi(X_n'C_nc_1^s), (p^*c_1^s)_* \rangle \\
&=& \tau_{m-s+1}. 
\end{eqnarray*}
The last equality is extracted from the commutativity of the diagram
(5.1). This completes the proof. 
\hfill\qed

\section{Proof of Theorem \ref{thm:main5}.}

This section is devoted to proving Theorem \ref{thm:main5}. 
The inclusion $\iota : \text{aut}_1(X) \to {\mathcal H}_{H, X}$ induces 
the map $B\iota : B\text{aut}_1(X) \to B{\mathcal H}_{H, X}$ with 
$B\iota \circ B\lambda_{G, X} = B\psi$. Therefore 
if $B\psi$ is injective on homology, then so is $B\lambda_{G, X}$. 

We shall prove the ``only if'' part by using the general categorical 
construction of a classifying space due to May \cite[Section 12]{May1} 
and by applying a part of the argument in the proof 
of \cite[Theorem 3.2]{May2} to our case.  

We here recall briefly the notion of a ${\mathcal O}$-graph; see 
\cite[page 68]{May2} for more detail. Let ${\mathcal O}$ be a discrete topological
space. 
Define a ${\mathcal O}$-graph to be a space ${\mathcal A}$ together
with maps $S : {\mathcal A} \to {\mathcal O}$ and 
$T : {\mathcal A} \to {\mathcal O}$.
The space ${\mathcal O}$ itself is regarded as ${\mathcal O}$-graph with
arrows $S$ and $T$ the identity map. 
Let ${\mathcal O}Gr$ be the category of ${\mathcal O}$-graphs whose morphisms
are maps $h : {\mathcal A} \to {\mathcal A}'$ compatible 
with maps $S$ and $T$. Observe that the pullback construction 
with respect to $S$ and $T$ makes  ${\mathcal O}Gr$ a monoidal
category. In fact, for  ${\mathcal O}$-graphs 
${\mathcal A}$ and ${\mathcal A}'$, ${\mathcal A}\Box{\mathcal A}'$ 
is defined by $\{(a, a') \in {\mathcal A}\times {\mathcal A}'| Sa =
Ta'\}$. 
Let ${\mathcal X}$ and  ${\mathcal Y}$ be a left ${\mathcal O}$-graph
and a right ${\mathcal O}$-graph, respectively; that is, ${\mathcal X}$
is a space with a map $T : {\mathcal X} \to {\mathcal O}$ and 
the space ${\mathcal Y}$ admits only a map $S : {\mathcal Y} \to
{\mathcal O}$. 

Let ${\mathcal M}$ be a monoid in 
${\mathcal O}Gr$ the category of ${\mathcal O}$-graphs and 
$B({\mathcal Y}, {\mathcal M}, {\mathcal X})$ denote the two-sided 
bar construction in the sense of May \cite[Section 12]{May1}, which is
the geometric realization of the simplicial space $B_*$ with 
$B_j= {\mathcal Y}\Box {\mathcal M}^{\Box j} \Box{\mathcal X}$.  
We regard a topological monoid $G$ as that in 
${\mathcal O}Gr$ with ${\mathcal O}=\{x\}$ the space of a point. 
Then the classifying space $BG$ we consider here 
is regarded as the bar construction $B(x, G, x)$.


\medskip
\noindent
{\it Proof of the ``only if'' part of Theorem  \ref{thm:main5}.}
Let $\iota' : {\mathcal H}_{H, X} \to \F(X, X)$ be the inclusion 
and  $e_* : \F(X, X)\to \F(X, X_\K)$ the map induced by 
the localization $e : X \to X_\K$. 
Since $X$ is an $F_0$-space or a space having the rational homotopy type
of the product of odd dimensional spheres by assumption, 
it follows from \cite[3.6 Corollary]{A-L} and 
\cite[Proposition 32.16]{F-H-T} that the natural map 
$
[X, X_\K] \to \text{Hom}(H^*(X_\K; \K), H^*(X; \K))
$
is bijective.  
We see that $e\circ \varphi \simeq e$ 
for any $\varphi \in {\mathcal H}_{H, X}$
Therefore the composite $e_*\circ \iota'$ factors through 
the connected component $\F(X, X_\K; e)$ of $\F(X, X_\K)$.  
We have a commutative diagram 
$$
\xymatrix@C20pt@R4pt{
{\mathcal H}_{H, X} \ar[rd]^(0.4){e_*\circ \iota'} & & \\
 & \F(X, X_\K; e) & \text{aut}_1(X_\K) \ar[l]_(0.5){e^*}^(0.5){\simeq} \\
\text{aut}_1(X)  \ar[ru]_(0.4){e_*} \ar[uu]^{\iota} & &
}
$$
in which the induced map $e^*$ is a homotopy equivalence.

Define ${\mathcal O}$  to be the discrete space with two points $x$ and
$y$.  Let ${\mathcal M}$ be the monoid in ${\mathcal O}Gr$ defined by 
${\mathcal M}(x, x)= \text{aut}_1(X)$, 
${\mathcal M}(y, y)= \text{aut}_1(X_\K)$
and ${\mathcal M}(x, y)= \F(X, X_\K ; e)$ with ${\mathcal M}(y, x)$ empty. 
Arrows $S, T : {\mathcal M}(a, b) \to {\mathcal O}$ are defined by 
$S(z)=a$ and $T(z)=b$ for $z \in {\mathcal M}(a, b)$.  
Moreover we define another monoid ${\mathcal M}'$ in ${\mathcal O}Gr$ by 
${\mathcal M}'(x, x)= {\mathcal H}_{H, X}$, 
${\mathcal M}'(y, y)= \text{aut}_1(X_\K)$, 
${\mathcal M}'(x, y)= \F(X, X_\K; e)$ and ${\mathcal M}'(y, x)=\phi$ 
with arrows defined immediately as mentioned above.   

The inclusions $i : \text{aut}_1(X) \to {\mathcal M}$, 
$j : \text{aut}_1(X_\K) \to {\mathcal M}$, 
$i': {\mathcal H}_{H, X}\to  {\mathcal M}'$ and 
$j' :  \text{aut}_1(X_\K) \to {\mathcal M}'$ induce the maps between
classifying spaces which fit into the commutative diagram 
$$
\xymatrix@C15pt@R5pt{
       &B{\mathcal H}_{H, X} \ar[r]^(0.45){Bi'} & 
 B({\mathcal O},{\mathcal M}',{\mathcal O} ) & \\
BG \ar[ru]^{B\psi} \ar[rd]_{B\lambda_{G,X}} & 
            & &   B\text{aut}_1(X_\K),  \ar[ul]^{\simeq}_{Bj'} 
 \ar[dl]_{\simeq}^{Bj}  \\
    & B\text{aut}_1(X)  \ar[r]_(0.45){Bi} \ar[uu]_{B\iota} &  
 B({\mathcal O},{\mathcal M},{\mathcal O} ) \ar[uu]_{B\widetilde{\iota}}& 
}
\eqnlabel{add-2}
$$
where  $\widetilde{\iota} : {\mathcal M} \to {\mathcal M}'$ is the
morphism of monoids in ${\mathcal O}Gr$ induced by the inclusion 
$\iota : \text{aut}_1(X) \to {\mathcal H}_{H, X}$.  
The proof of \cite[Theorem 3.2]{May2} enables us to conclude that 
maps $Bj$ and $Bj'$ are homotopy equivalences. 
The map $\Omega((Bj)^{-1}\circ (Bi))$ coincides with the composite 
$(e^*)^{-1}\circ e_* : \text{aut}_1(X) \to \F(X, X_\K; e) \to 
\text{aut}_1(X_\K)$ up to weak equivalence; 
see \cite[Theorem 3.2(i)]{May2}. 
Moreover the map $e_* : \text{aut}_1(X) \to \F(X, X_\K; e)$ is a
localization; see \cite{H-M-R}.  These facts yield that 
$\pi_*(\Omega Bi)\otimes \K$ is an isomorphism and hence so is 
$\pi_*(Bi)\otimes \K$. Thus the localized map $(Bi)_\K$ is a weak
equivalence.  
This implies that 
$(Bi)_* : H_*(B\text{aut}_1(X); \K) \to 
H_*(B({\mathcal O},{\mathcal M},{\mathcal O}); \K)$ is an isomorphism. 
The commutative diagram (7.1) enables us to conclude that 
$H_*(B\psi; \K)$ is injective if so is $H_*(B\lambda_{G,X} ; \K)$. 
This completes the proof. 
\hfill \qed 

\medskip

As we pointed out in the introduction,  \cite[Proposition 4.8]{K-M} follows from
Theorems \ref{thm:main3} and \ref{thm:main5}. In fact, suppose that $M$
is the flag manifold $U(m)/U(m_1)\times \cdots \times U(m_l)$ and 
$G=SU(m)$. Then as is seen in Remark \ref{rem:int} below 
$(\lambda_{G,M})_* : \pi_*(BG)\otimes \K \to \pi_*(B\aut{M})\otimes\K$
is injective if and only if 
$(B\lambda_{G,M})^* : H^*(BG) \to H^*(B\aut{M})$ is surjective.

\begin{rem}
 \label{rem:int}
Suppose that $M$ is a homogeneous space of the form $G/H$ for which 
$\text{rank} \ G = \text{rank} \ H$. 
The main theorem in \cite{S-T} due to Shiga and Tezuka implies 
that $\pi_{2i}(\text{aut}_1(M))\otimes \K= 0$ for any $i$. Thus 
$H^*(B\text{aut}_1(M); \K)$ is a polynomial algebra generated by 
the graded vector space $(sV)^\sharp$, 
where $(sV)_l = \pi_{l-1}(\text{aut}_1(M))$. Therefore the dual map 
to the Hurewicz homomorphism $\Xi^\sharp : H^*(B\text{aut}_1(M); \K)
 \to \text{Hom}(\pi_*(B\text{aut}_1(M)),  \K)$ induces 
an isomorphism on the vector space of indecomposable elements; 
see \cite[page 173]{F-H-T} for example. Thus
 the commutative diagram 
$$
\xymatrix@C28pt@R20pt{
H^*(BG; \K) \ar[d]_{\Xi^\sharp} 
    & H^*(B\text{aut}_1(M); \K) \ar[l]_{(B\lambda_{G,M})^*} 
         \ar[d]^{\Xi^\sharp} \\
\text{Hom}(\pi_*(BG), \K) & 
\text{Hom}(\pi_*(B\text{aut}_1(M)), \K) 
\ar[l]^(0.55){((B\lambda_{G,M})_*)^\sharp}
}
$$
yields that the map $(B\lambda_{G,M})^*$ is surjective 
if $G$ is rationally visible in $\aut{M}$. 
We also see that the induced map 
$(B\psi)_* : H_j(BG) \to H_j(B{\mathcal H}_{H, G/U})$ is injective
for each triple $(G, U, i)$ in Tables 1 and 2 if $j \in vd(G, G/U)$. 
\end{rem}

\section{The sets $vd(G, G/U)$ of visible degrees in Tables 1 and 2}

In this section, we deal with the visible degrees 
described in Tables 1 and 2 in Introduction. 

For the case where the 
homogeneous space $G/U$ has the rational homotopy type of the sphere, 
the assertions on the visible degrees follow from 
the latter half of Theorem \ref{thm:main2}.  In fact, the argument in
Example \ref{ex:spheres} does work well to obtain such results. 
The details are left to the reader.  The results for (11) and for (17)
follow from Theorems \ref{thm:main3} and  \ref{thm:main4},
respectively. 
We are left to verify the visible degrees 
for the cases (1), (5), (6), (6)' (16) and (19).  

(1). It is well-known that 
$(B\iota)^*(p_i)= (-1)^i(\chi^2 p_{i-1}' +  p_i')$ for the induced map 
$(B\iota)^* : H^*(BSO(2m+1)) \to H^*(B(SO(2)\times SO(2m-1))$, where 
 $p_i'$ is the $i$th Pontrjagin class in 
$H^*(B(SO(2m-1))\cong \K[p_1', ..., p_{m-1}']$; see  \cite{M-T}. 
 
We can construct a Sullivan model $(\wedge W, d)$ for 
the Grassmann manifold $M:=SO(2m+1)/SO(2)\times SO(2m-1)$ for which   
$\wedge W= \wedge (\chi, p_1', ..., p_{m-1}', \tau_2, \tau_4, ...,
 \tau_{2m})$
and $d(\tau_{2i}) = (-1)^i(\chi^2 p_{i-1}' +  p_i')$ for 
$1\leq i \leq m$. We see that there exists a quasi-isomorphism  
$w : (\wedge(\chi, \tau_{2m}), d\tau_{2m}=-\chi^{2m}) \to (\wedge W, d)$
 such that $w(\chi)=\chi$ and 
$$
w(\tau_{2m})=\chi^{2(m-1)}\tau_2 + \cdots + \chi^2\tau_{2(m-1)} +
 \tau_{2m}.   
$$
In view of the rational model 
$\widetilde{\widetilde{\mu}} : E'/M_u \to \wedge V_G$  
for $\lambda_{G,M} : SO(2m+1) \to \text{aut}_1(M)$ 
mentioned in (5.2) and Theorem \ref{thm:model_adjoint}, it follows
 from Lemma \ref{lem:model-action2} that 
\begin{eqnarray*}
\widetilde{\widetilde{\mu}}(\tau_{2m}\otimes (\chi^{2l})_*))&=&
 (-1)^{\tau(|\chi^{2l}|)}\langle \zeta \circ w(\tau_{2m}),
(\chi^{2l})_* \rangle \\
&=& \langle \chi^{2(m-1)}\tau_2 + \cdots + \chi^2\tau_{2(m-1)} + 
 \tau_{2m}, \ (\chi^{2l})_*\rangle \\ 
&=& \tau_{2(m-l)}, 
\end{eqnarray*}
where 
$\zeta$ is the Sullivan representative for the action 
$SO(2m-1) \times M \to M$ described in 
Lemma \ref{lem:model-action}. We have the result. 

The same argument does work well to prove the result for the case (16).

(19). Let $\iota : Spin(9) \to F_4$ be the inclusion map. 
Without loss of generality, we can assume that the induce map 
$$(B\iota)^* : H^*(BF_4; \K) = \K[y_4, y_{12}, y_{16}, y_{24}] \to 
H^*(BSpin(9); \K)= \K[y_4, y_8, y_{12}, y_{16}]$$
satisfies the condition that $(B\iota)^*(y_i)= y_i$ for $i = 4, 12, 16$ and 
 $(B\iota)^*(y_{24})= y_8^3$, where $\deg y_i = i$.  This fact follows from 
a usual argument with the Eilenberg-Moore spectral sequence for the
 fibration ${\mathcal L}P^2 \to BSpin(9) \stackrel{B\iota}{\to} BF_4$. 
By virtue of Lemmas \ref{lem:model-action} and
 \ref{lem:model-action2}, 
we see  that there exists a model for the linear action 
$F_4 \times {\mathcal L}P^2 \to {\mathcal L}P^2$ of the form 
$$
\zeta : ( \wedge (x_{23}')\otimes\wedge (y_8), d)
\to  (\wedge (x_3, x_{11}, x_{15}, x_{23})\otimes \wedge (x_{23}'
\otimes \wedge (y_8), d') 
$$
with 
$
\zeta(x_{23}')= x_{23}\otimes 1\otimes 1 + 1\otimes x_{23}' \otimes 1, 
$ 
where 
$d(x_{23}')= y_8^3$, $d'(x_j)= 0$ for $j = 3, 11, 15, 23$. 
In fact, for dimensional reasons, we write 
$\zeta(x_{23}')= 1\otimes x_{23}' \otimes 1 + 
 x_{23}\otimes 1 \otimes 1 + c  x_{15} \otimes 1\otimes y_8$ 
with a rational number $c$. 
By definition, we see that $\zeta= \psi\otimes 1$, where  
$\psi$ denotes the DGA map in Lemma \ref{lem:model-action2}.  
Since the image of each element with degree less than $24$ 
by $(B\iota)^*$ does not have the element 
$y_8$ as a factor, it follows that $c = 0$.  
Observe that $\wedge V_{BF_4}$-action on $\wedge V_{BSpin(9)}$ is
 induced by the map $(B\iota)^*$. 
The dual to the map $(\lambda_*)_i : \pi_i(F_4)\otimes \K \to
 \pi_i(\text{aut}_1(F_4/Spin(9)))\otimes \K$ is regarded as 
the induced map 
$$
H(Q(\widetilde{\widetilde{\mu}})) : 
H^* (Q(\widetilde{E}/M_u), \delta_0) \to 
V_G=\K\{x_3, x_{11}, x_{15}, x_{23} \} 
$$
in Theorem \ref{thm:key}.
We see that 
$
Q(\widetilde{E}/M_u)=\K\{y_8\otimes 1_*, x_{23}'\otimes 1_*,  
x_{23}'\otimes (y_8)_*, x_{23}'\otimes (y_8^2)_* \},  
$ 
$\delta_0( x_{23}'\otimes (y_8^2)_*)= 3 y_8\otimes 1_*$,  
$\delta_0( x_{23}'\otimes 1_*)=
\delta_0( x_{23}'\otimes (y_8^1)_*)=0$; 
see Example \ref{ex:evaluation-map}.  
Moreover the direct computation with (3.1) shows that 
$Q(\widetilde{\widetilde{\mu}})(x_{23}'\otimes 1_*)=\pm x_{23}$ and 
   $Q(\widetilde{\widetilde{\mu}})(x_{23}'\otimes (y_8)_*) = 0$. 
This implies that $vd(F_4, {\mathcal L}P^2)=\{23\}$. 


(5). The inclusion $\iota : SO(4) \to G_2$ induces the ring homomorphism 
$$
(B\iota)^* : H^*(BG_2)\cong \K[y_4, y_{12}]  \to H^*(BSO(4))\cong 
\K[p_1, \chi],  
$$ 
where $\deg p_1=4$ and $\deg \chi = 4$. It is immediate that 
$(B\iota)^*(y_{12})$ is decomposable for dimensional reasons. 
Form Example \ref{ex:evaluation-map}, we see that 
$\pi_*(\aut{{\mathbb H}P^2}) \cong \K\{y\otimes 1_*, y\otimes (x^1)_*\}$, 
where $\deg y\otimes 1_* = 11$ and $\deg y\otimes (x^1)_*=7$. It follows
from Theorem \ref{thm:main2} that $vd(G_2, G_2/SO(4))= \{11\}$.

(6). Let $T^2$ be the standard maximal torus of $U(2)$. 
We assume that $G_2 \supset U(2) \supset T^2$ without loss of
generality. Then the inclusion $W(G_2) \supset W(U(2))$ 
of Weyl groups gives the inclusions 
$$
\xymatrix@C20pt@R15pt{
\K[t_1, t_2]^{W(G_2)} \ \ar@{>->}[r] & 
\K[t_1, t_2]^{W(U(2))}  \ \ar@{>->}[r] & \K[t_1, t_2] \\
H^*(BG_2) \ar[u]^{\cong} & H^*(BU(2)) \ar[u]^{\cong}  
& H^*(BT^2). \ar[u]^{\cong}
}
$$
The result  \cite[page 212, Example 3]{L.Smith} implies that there exist
generators $y_4$, $y_{12}$ of $H(BG_2)$ such that 
$H(BG_2)\cong \K[y_4, y_{12}]$ and 
$y_4= t_1^2 -t_1t_2 + t_2^2$,  
$y_{12}=(t_1t_2^2 - t_1^2t_2)^2$ in $\K[t_1, t_2]^{W(G_2)}$. 
Since the Chern classes $c_1, c_2 \in H^*(BU(2))$ are regarded as 
$t_1 +t_2$ and $t_1t_2$, respectively in $\K[t_1, t_2]^{W(U(2))}$,  
it follows that 
$$
(B\iota)^*(y_4) = c_1^2 - 3c_2 \ \ \text{and} \ \  
(B\iota)^*(y_{12}) = c_1^2c_2^2 - 4c_2^3, 
$$ 
where $\iota : U(2) \to G_2$ is the inclusion. 
Put $\tilde{c}_2 = -\frac{1}{3}c_1^2 + c_2$. Then we see that 
$(B\iota)^*(-\frac{1}{3}y_4) = \tilde{c}_2$ and 
$$
(B\iota)^*(y_{12})= -\frac{1}{27}c_1^6 - \frac{2}{3}c_1^4\tilde{c}_2 
-3c_1^2\tilde{c}_2^2 -4\tilde{c}_2^3. 
$$
By the direct computation implies that 
\begin{eqnarray*}
& &(B\iota)^*(-\frac{1}{3}y_4)\otimes 1_* - 
(B\iota)^*(y_{12})\otimes (-\frac{3}{2})(c_1^4)_* \\ 
&=&   \tilde{c}_2\otimes 1_* + 
\frac{3}{2} \bigl(-\frac{1}{27}c_1^6 - \frac{2}{3}c_1^4\tilde{c}_2 
-3c_1^2\tilde{c}_2^2 -4\tilde{c}_2^3\bigr) \otimes (c_1^4)_* \\
&\equiv&  \tilde{c}_2\otimes 1_* -  \tilde{c}_2\otimes 1_* \equiv 0
\end{eqnarray*}
modulo decomposable elements in 
$(H^*(BU(2)) \! : \! H^*(G_2/U(2)))/M_u$.  
It is immediate that $(B\iota)^*(y_{12})$ is decomposable. 
By virtue of Theorem \ref{thm:main2}, we have 
$vd(G_2, G_2/U(2))=\{3, 11\}$. The same argument works well to obtain the
result for the case (6)'.

\medskip
\noindent
{\it Acknowledgments.}
I am particularly grateful to Kojin Abe, Kohhei Yamaguchi and Masaki
Kameko for valuable comments on this work and to Hiroo Shiga for drawing
my attention to this subject. I thank the anonymous referee of 
a previous version of this paper for showing me  
a very simple proof of the latter half of Theorem \ref{thm:main2}, which
is described in Remark \ref{rem:anotherproof}.

\section{Appendix. Extensions of characteristic classes}

For a space $X$, let $X^\delta$ denote the space with the 
discrete topology whose underlying set is the same as that of $X$. 
Let $M$ be a homogeneous space  admitting an action of 
a connected Lie group $G$. In this section, 
we consider cohomology classes of $B(\text{Diff}_1(M))^\delta$ as well
as those of $B(\aut{M})^\delta$, which detect familiar characteristic classes 
via the induced map 
$$
(B\lambda)^* : H^*(B(\text{Diff}_1(M))^\delta ; \K) \to H^*(BG^\delta; \K).    
$$

Let $G$ be a real semi-simple connected Lie group with finitely many
components and 
$h : G \to G_{\mathbb C}$ the complexification of $G$. 
One has a commutative diagram 
$$
\xymatrix@C20pt@R15pt{
H^*(BG_{\mathbb C}) \ar[r]^{h^*} & H^*(BG) \ar[r]^{j^*} & H^*(B(G^\delta)) \\ 
H^*(B\text{aut}_1(G/U)) \ar[ur]_{B\lambda^*} \ar[r] & 
H^*(B\text{Diff}_1(G/U)) \ar[u] \ar[r] &  
H^*(B(\text{Diff}_1(G/U))^\delta ) \ , \ar[u]  
}
$$
where $j : G^\delta \to G$ stands for the natural map.  
The result \cite[THEOREM 2]{M} asserts that the kernel of $j^*$
is equal to the ideal generated by the positive dimensional elements 
in $\text{Im} h^*$. 

As an example, we consider the case where $G = SL(2m; {\mathbb R})$ and 
$U$ is a maximal rank subgroup of $SO(2m)$ with $(QH^*(BU ; \K))^{2m}=0$, for
example $U$ is a maximal torus of $SO(2m)$.  
Observe that $G \cong SO(2m)$ and $G_{\mathbb C}\cong SU(2m)$. 
Then Milnor's result mentioned above allows us to conclude that 
the Euler class $\chi$ of $H^*(BSL(2m; {\mathbb R}))$ survives in 
$H^*(B(G^\delta))$; see \cite{Milnor-S}. Moreover 
Theorem \ref{thm:main2} yields that 
$(B\lambda)^* : H^i(B\aut{G/U}) \to
H^i(BG)$ 
is surjective for $i = 2m$; see also Remark \ref{rem:int}. 
Thus the class $\chi \in H^*(B(G)^\delta)$ 
is extendable to an element $\widetilde{\chi}$ of 
$H^*(B(\text{Diff}_1(G/U))^\delta)$. Let $p_m$ be the $m$th Pontjagin
class. Then we see that $h^*(c_{2m})=p_m$ and $p_m=\chi^2$; see \cite{M-T}.
This yields that $\chi^2=0$ in $H^*(B(G^\delta))$. 
It remains to show whether $\widetilde{\chi}^2$ is zero in 
$H^*(B(\text{Diff}_1(G/U))^\delta )$.

\begin{rem}
The result \cite[Corollary]{M} yields that the induced homomorphism 
$(Bj)^* : H^*(BG; {\mathbb Z}) \to H^*(BG^\delta; {\mathbb Z})$ is
 injective. Thus the same argument as above does work well to find
 a nontrivial element in the cohomology 
$H^*(B(\text{Diff}_1(G/U))^\delta)$ for an appropriate subgroup 
$U$ of $G$ if $H^*(BG; {\mathbb Z})$ is
 torsion free.   
\end{rem}

\end{document}